\numberwithin{equation}{section}
\numberwithin{figure}{section}
\theoremstyle{plain}
\newtheorem{thm}{\protect\theoremname}[section]
\theoremstyle{remark}
\newtheorem{rem}[thm]{\protect\remarkname}
\theoremstyle{definition}
\newtheorem{defn}[thm]{\protect\definitionname}
\theoremstyle{plain}
\newtheorem{lem}[thm]{\protect\lemmaname}
\theoremstyle{plain}
\newtheorem{prop}[thm]{\protect\propositionname}
\theoremstyle{plain}
\newtheorem{cor}[thm]{\protect\corollaryname}
\theoremstyle{remark}
\newtheorem{observation}[thm]{\protect\observationname}
\theoremstyle{remark}
\newtheorem{claim}[thm]{\protect\claimname}
\theoremstyle{plain}
\patchcmd{\@settitle}{\uppercasenonmath\@title}{}{}{}
\patchcmd{\@setauthors}{\MakeUppercase}{}{}{}
\patchcmd{\section}{\scshape}{}{}{}
\providecommand{\claimname}{Claim}
\providecommand{\corollaryname}{Corollary}
\providecommand{\definitionname}{Definition}
\providecommand{\factname}{Fact}
\providecommand{\lemmaname}{Lemma}
\providecommand{\observationname}{Observation}
\providecommand{\propositionname}{Proposition}
\providecommand{\remarkname}{Remark}
\providecommand{\theoremname}{Theorem}
\begin{document}
\global\long\def\One{\mathds{1}}%

\global\long\def\Laplacian{\Delta}%

\global\long\def\grad{\nabla}%

\global\long\def\norm#1{\left\Vert #1\right\Vert }%

\global\long\def\zz{\mathbb{Z}}%

\global\long\def\rr{\mathbb{R}}%

\global\long\def\nn{\mathbb{N}}%

\global\long\def\pp{\mathbb{P}}%

\global\long\def\ee{\mathbb{E}}%

\global\long\def\floor#1{\left\lfloor #1\right\rfloor }%

\global\long\def\ceil#1{\left\lceil #1\right\rceil }%

\global\long\def\var{\operatorname{Var}}%

\global\long\def\dd{\operatorname{d}}%

\global\long\def\loss{\operatorname{Loss}}%

\global\long\def\dom{\operatorname{Dom}}%

\global\long\def\etab{\overline{\eta}}%

\global\long\def\mub{\overline{\mu}}%

\global\long\def\cb{\overline{c}}%

\global\long\def\eb{\overline{e}}%

\global\long\def\etah{\hat{\eta}}%

\global\long\def\muh{\hat{\mu}}%

\title{Hydrodynamic limit for the Kob-Andersen model}
\author{Assaf Shapira}
\email{\href{mailto:assaf.shapira@normalesup.org}{assaf.shapira@normalesup.org}}
\urladdr{\href{https://assafshap.github.io/}{assafshap.github.io}}
\begin{abstract}
This paper concerns with the hydrodynamic limit of the Kob-Andersen
model, an interacting particle system that has been introduced by
physicists in order to explain glassy behavior, and widely studied
since. We will see that the density profile evolves in the hydrodynamic
limit according to a non-degenerate hydrodynamic equation, and understand 
how the diffusion coefficient decays as density grows.
\end{abstract}

\maketitle

\section{Introduction}

The Kob-Andersen (KA) model is an interacting particle system on $\zz^{d}$,
where each site of the lattice is allowed to contain at most one particle,
and particles could jump to an empty neighboring site only under a
certain constraint, conserving the total number of particles.
More precisely, depending on an integer parameter $k$,
every particle jumps with rate $1$ to each of its neighboring sites,
provided that the particle has at least $k$ empty neighbors both
before and after the jump (so for $k=1$ we obtain the symmetric simple
exclusion process). This model has been introduced in the physics
literature (\cite{KobAndersen}) as one member of a large family of
interacting particle systems called \emph{kinetically constrained
lattice gases} (KCLGs), which model certain aspects of glassy behavior
(see \cite{GarrahanSollichToninelli2011,RitortSollich}). 

In this paper we will study the hydrodynamic limit of the KA model.
Consider a finite box with periodic boundary conditions $\mathbb{T}_{N}^{d}=\zz^{d}/N\zz^{d}$,
and run the KA dynamics inside $\mathbb{T}_{N}^{d}$. The configuration
at time $s$ could be described as an empirical measure $\nu_{s}^{(N)}$
on the continuous torus $\mathbb{T}^{d}=\rr^{d}/\zz^{d}$: for a rectangle
$R\subset\left[0,1\right]^{d}$, seen as a subset of $\mathbb{T}^{d}$,
$\nu_{s}^{(N)}(R)$ will count the number of particles in $(NR)\cap\mathbb{T}_{N}^{d}$,
normalized by $N^{-d}$ (so that the total mass remains independent
of $N$). The initial configuration that we choose will be approximated
by some macroscopic profile $\rho_{0}:\mathbb{T}^{d}\rightarrow\left[0,1\right]$,
i.e., the measure $\nu_{0}^{(N)}$ will be close to a measure $\nu_{0}$
that has density $\rho_{0}$ with respect to the Lebesgue measure.
A simple example of such initial configuration is given by placing
a particle at each site $x\in\zz^{d}$ independently at random with
probability $\rho_{0}(x/N)$.

In many systems, the relevant time scale over which $\nu_{N}^{s}$
changes macroscopically is the diffusive time scale $N^{2}$ (see,
e.g., \cite{KipnisLandim,Spohn2012IPS}). That is, fixing a time $t$,
we expect the random measure $\nu_{N^{2}t}^{(N)}$ to satisfy a law
of large numbers, converging to some limiting measure
$\nu_{t}$. We also expect this limiting measure to have a density
with respect to the Lebesgue measure, namely $\nu_{t}=\rho(\theta,t)\text{d}\theta$,
which solves the diffusion equation: 
\begin{equation}
\frac{\partial}{\partial t}\rho=\grad\,D(\rho)\grad\rho,\quad\rho(\theta,0)=\rho_{0}(\theta).\label{eq:HE}
\end{equation}
The parameter $D(\rho)$ is the \emph{diffusion coefficient}, and
when it is non-zero we obtain indeed a macroscopic density profile
that changes over diffusive time scales.

Hydrodynamic limits of other KCLGs have been analyzed in \cite{GoncalvesLandimToninelli,BlondelGoncalvesSimon}.
They present two example of \emph{non-cooperative} KCLGs, in which
one is able to identify structures (called \emph{mobile clusters})
that could move freely in $\zz^{d}$ (see Definition \ref{def:mobile_cluster}).
This way, even though particles could be blocked, mobile clusters behave
effectively in an unconstrained manner.
In \emph{cooperative} KCLGs there are no such mobile clusters,
so in order to move a particle from one site to the other one needs
the cooperation of a diverging number of particles. This property
has a major contribution to the glassy behavior of many KCLGs, and
is responsible for the fast divergence of time scales.
See \cite{ToninelliBiroliFisher}.

Unlike the models previously studied in \cite{GoncalvesLandimToninelli,BlondelGoncalvesSimon},
the KA model is cooperative. Due to this cooperative nature, the combinatorics
behind the KA model becomes much more complicated. Consider the following
question -- starting from a stationary measure and assuming that
there is a particle at the origin, will this particle eventually move,
or could it stay at the origin forever? When the model is non-cooperative
the probability to stay forever at the origin is clearly $0$ --
we know that there is some non-zero density of mobile clusters in
$\zz^{d}$ which diffuse freely, so at some point one of them will
reach the origin and move the particle. When the model is cooperative,
as in the case of the KA model, already this basic question becomes
much more complicated. In some cooperative models the particle might
remain blocked forever with positive probability, possibly depending
on the density of the initial configuration. In the case of the KA
model, it is shown in \cite{ToninelliBiroliFisher} that all particles
will eventually move with probability $1$, unless the initial density
equals $1$.

In the context of the hydrodynamic limit, the techniques used in \cite{GoncalvesLandimToninelli,BlondelGoncalvesSimon}
cannot be simply adapted to cooperative models. It is shown in Appendix
\ref{sec:appendix} that cooperative KCLGs are non-gradient, a fact
which makes the analysis of the hydrodynamic limit much more involved.
See Appendix \ref{sec:appendix} for a more complete discussion.
Another property of non-cooperative models used in \cite{BlondelGoncalvesSimon}
is that the probability for a site to stay blocked forever for the
dynamics in $\mathbb{T}_{N}^{d}$ decreases exponentially fast with
the volume $N^{d}$, since it is bounded by the probability that no
mobile cluster is found in $\mathbb{T}_{N}^{d}$. In the KA model,
on the other hand, even though this probability decays to $0$, the
decay is not fast enough. 

Recently, a few methods have been developed to overcome some of these
difficulties, proving diffusive scaling of the relaxation time \cite{MST19KA}
and of the motion of a tagged particle \cite{BlondelToninell2018KAtagged,ES20KAtagged}
in the stationary setting. In both cases, the behavior is the same
as that of the simple exclusion process, with time scales that are
all slowed down by a factor which diverges quickly as the density
approaches $1$. For example, in the case $k=d=2$, the relaxation
time at density $\rho$ in a box of side $N$ behaves (roughly) like
$e^{C/(1-\rho)}N^{2}$; and the path of a tagged particle in $\zz^{2}$
converges to a standard Brownian motion as the length scale $N$ diverges,
when time is scaled (roughly) as $e^{C/(1-\rho)}N^{2}$.

The hydrodynamic limit of the KA model has been studied in the physics
literature, both heuristically and numerically. In \cite{Sellitto2002}
the model has been analyzed, under the (wrong) assumption that the
diffusion coefficient $D(\rho)$ vanishes for $\rho>\rho_{c}\approx0.88$.
\cite{TeomyShokef2017KAHL} studies the diffusion coefficient in two
dimensions both numerically and under a mean-field approximation.
This approximation yields a diffusion coefficient that behaves polynomially
in $\rho$, and is in rather good agreement with numerical results
for low densities. \cite{AritaKrapivskyMallick2018} provides a perturbative
analysis of the diffusion coefficient in two dimensions, considering
finite range effects, and obtaining a polynomial in $\rho$ which
approximates $D(\rho)$ very accurately as long as $\rho$ is not
too big. In view of other quantities related to the KA model studied
in \cite{MST19KA,ES20KAtagged}, a natural conjecture for the high
density regime is that the diffusion coefficient remains positive
whenever $\rho<1$, and as $\rho$ tends to $1$ it decays (roughly)
as $e^{-C/(1-\rho)}$ (in the case $k=d=2$). This conjecture has
been raised in \cite{AritaKrapivskyMallick2018} and was supported
by numerical simulations.

The hydrodynamic limit in its full generality, though, cannot exist
for this model -- consider, for example, the case $k=d=2$, and an
initial density $\rho_{0}$ bounded above $\frac{8}{9}$. Fix $N\in3\nn$,
and construct the following initial configuration -- for every $x\in\mathbb{T}_{N}^{2}$,
if $x\notin3\zz^{2}$ place a particle at $x$ (deterministically).
Otherwise, place a particle at $x$ independently at random with probability
$9\rho_{0}(x/N)-8$. See Figure \ref{fig:blocked_configuration}. These configurations have limiting density $\frac{1}{9}\left(9\rho_{0}(x/N)-8\right)+\frac{8}{9}=\rho_{0}$,
so one may naively expect that, starting the KA dynamics from such
a configuration, the particle density will converge to the solution
of the hydrodynamic equation (\ref{eq:HE}) with initial density $\rho_{0}$.
However, observing the initial configuration more carefully, one sees
that it is blocked -- no site has two empty neighbors, so the constraint
is not satisfied. In this case particles do not move, and the dynamics
will certainly not follow the hydrodynamic limit. Still, since blocked
configurations are very rare (\cite{ToninelliBiroliFisher}), we may
hope that a hydrodynamic limit does exist in a weaker sense, that
would allow us to avoid these untypical configurations.

\begin{figure}
\begin{tikzpicture}[scale=0.5, every node/.style={scale=1.0}]
	\draw[step=1,gray] (0,0) grid +(9,9);
	
	\foreach \x in {0,...,8}{
		\foreach \y 
			[evaluate=\y as \z using {mod(\y,3)==1 && mod(\x,3)==1}]
			 in {0,...,8}{
			\ifnum \z=0 
				\fill[black,shift={(0.5,0.5)}] (\x,\y) circle (0.4);
			\else
				\fill[pattern=north east lines,shift={(0.5,0.5)}] (\x,\y) circle (0.4);
			\fi
		}
	}
		
\end{tikzpicture}
\caption{\label{fig:blocked_configuration}This is an example of a blocked configuration for the case $k=d=2$. The filled circles represent occupied sites, while sites marked with a line pattern could be either empty or occupied. In this example particles have at most one empty neighbor, so none of them could move.}
\end{figure}

The same problem also appears in \cite{GoncalvesLandimToninelli},
and they suggest two solutions -- the first is to restrict the initial
configuration, e.g., to an independent product of Bernoulli random
variables with parameter $\rho_{0}(x/N)$. This prevents the issue
discussed above, where the configuration is entirely blocked from
the beginning, but one must work harder in order to show that blocked
configurations are not created later on during the dynamics. Another
approach, also considered in \cite{GoncalvesLandimToninelli}, is
to permit transitions in which the constraint is not satisfied, but
with a vanishing rate. Namely, for some $\varepsilon>0$, we introduce
soft constraints, which allow a particle to move with rate $1$ when
it has $k$ empty neighbors before and after the jump, and with rate
$\varepsilon$ otherwise. This softening of the constraint enables
the system to unblock the blocked configurations, and still the main
contribution to the overall dynamics comes from the allowed transitions
(where the constraint is satisfies).

This is the approach we will take -- consider the KA model with $\varepsilon$-soft
constraints, which has a hydrodynamic limit with diffusion coefficient
$D^{(\varepsilon)}$. We analyze this coefficient, showing that, as
$\varepsilon\rightarrow0$, it converges to a strictly positive limiting
coefficient $D$. This result tells us that when $\varepsilon$ is
very small, it has a very mild effect on the hydrodynamic limit; and
the role it plays (of unblocking configurations), though crucial for
the convergence to the hydrodynamic limit, takes a negligible amount
of time compared to the hydrodynamic scale. We also analyze the value
of $D$ at large densities, finding upper and lower bounds for its
decay, which match up to sub-leading corrections. The decay that we
obtain is of the same type as the corresponding factor in \cite{MST19KA,ES20KAtagged};
so in particular for the case $k=d=2$, as conjectured is \cite{AritaKrapivskyMallick2018},
$D$ decays (roughly) as $e^{-C/(1-\rho)}$.

\section{Model and main result}

The Kob-Andersen model in dimension $d$ is a Markov process on $\Omega=\{0,1\}^{\zz^{d}}$,
depending on a parameter $2\le k\le d$. For a configuration $\eta\in\Omega$,
we say that $x\in\zz^{d}$ is \emph{occupied} if $\eta(x)=1$ and
\emph{empty} if $\eta(x)=0$. The elements of $\zz^{d}$ are called
\emph{sites}, and we will consider the (undirected) graph structure
given by the edge set 
\[
\mathcal{E}(\zz^{d})=\left\{ (x,y)\in\zz^{d}\times\zz^{d},y\in x+\{\pm e_{1},\dots,\pm e_{d}\}\right\} ,
\]
where $e_{1},\dots,e_{d}$ are the standard basis vectors. We will
sometimes write $x\sim y$ to denote $(x,y)\in\mathcal{E}(\zz^{d})$.

For each configuration $\eta\in\Omega$ and edge $(x,y)\in\mathcal{E}(\zz^{d})$,
we define the constraint
\begin{equation}
c_{x,y}(\eta)=\begin{cases}
1 & \text{if } \sum_{z:y\sim z\neq x}(1-\eta(z))\ge k-1\text{ and }\sum_{z:x\sim z\neq y}(1-\eta(z))\ge k-1,\\
0 & \text{otherwise}.
\end{cases}\label{eq:constraint}
\end{equation}
The KA dynamics is then defined as the Markov process whose generator,
operating on a local function $f:\Omega\rightarrow\rr$, is given
by 
\begin{equation}
\mathcal{L}f(\eta)=\sum_{(x,y)\in\mathcal{E}(\zz^{d})}c_{x,y}(\eta)\grad_{x,y}f(\eta),\label{eq:generator}
\end{equation}
where 
\[
\grad_{x,y}f(\eta)=f(\eta^{x,y})-f(\eta),
\]
and $\eta^{x,y}$ is the configuration obtained from $\eta$ by exchanging
the occupation at $x$ and at $y$. This process, for any $\rho\in(0,1)$,
is reversible with respect to the measure $\mu_{\rho}$, which is
a product measure of Bernoulli random variables with parameter $\rho$.
This is a consequence of the fact that $c_{x,y}(\eta)$ does not depend on
the occupation at $x$ and at $y$.
When clear from the context we will sometimes omit the subscript $\rho$. 
For more details on the construction of the model see \cite{CMRT2010}.

As discussed in the introduction, in order to study the hydrodynamic
limit we introduce the \emph{soft constraint} for some $\varepsilon \ge 0$:
\begin{equation}
c_{x,y}^{(\varepsilon)}=\begin{cases}
1 & \text{if } c_{x,y}=1,\\
\varepsilon & \text{otherwise},
\end{cases}\label{eq:softconstraint}
\end{equation}
and the \emph{soft dynamics} defined by the generator 
\begin{equation}
\mathcal{L}^{(\varepsilon)}f(\eta)=\sum_{(x,y)\in\mathcal{E}(\zz^{d})}c_{x,y}^{(\varepsilon)}(\eta)\grad_{x,y}f(\eta).\label{eq:softgenerator}
\end{equation}

The introduction of the soft constraints allows us to use the general
result of \cite{FunakiUchiyamaYau, VaradhanYau97}. Fix $\varepsilon \ge 0$, and let 
\begin{equation}
D^{(\varepsilon)}(\rho)=\frac{1}{2\rho(1-\rho)}\inf_{f}\mu_{\rho}\left[\sum_{\alpha}c_{0,e_{\alpha}}^{(\varepsilon)}\left(\delta_{\alpha,1}\left(\eta(e_{1})-\eta(0)\right)-\sum_{x\in\zz^{d}}\grad_{0,e_{\alpha}}\tau_{x}f\right)^{2}\right],\label{eq:vy}
\end{equation}
where the infimum is taken over all local functions $f:\Omega\rightarrow\rr$;
a function is \emph{local} if it depends on the occupation of finitely many
sites. Note that the sum over $x$ is well defined thanks to the locality of $f$.
The operator $\tau_{x}$ is the translation by $x$, that is, 
\begin{align*}
(\tau_{x}f)(\eta) & =f(\tau_{x}\eta),\\
(\tau_{x}\eta)(y) & =\eta(x+y).
\end{align*}
 In this setting, by  \cite{FunakiUchiyamaYau,Bernardin02,VaradhanYau97}, the density profile of
the soft dynamics converges in the hydrodynamic limit to the solution
of the hydrodynamic equation (\ref{eq:HE}), with diffusion coefficient
$D^{(\varepsilon)}(\rho)$.

By equation (\ref{eq:vy}) the diffusion coefficient is decreasing
with $\varepsilon$, and hence converging to a limit: 
\begin{equation}
D(\rho)=\lim_{\varepsilon\rightarrow0}D^{(\varepsilon)}(\rho).\label{eq:D_eps0}
\end{equation}
When taking $\varepsilon$ to $0$ slowly enough as $N$ grows to infinity, the
density profile converges to the solution of the diffusion equation \eqref{eq:HE}
with this diffusion coefficient:
\begin{prop} \label{prop:epsilon_decreases_with_N}
Fix a smooth initial density profile $\rho_0 : \mathbb{T}^d \to (0,1)$, and let
$\nu_0$ be the measure whose density with respect to the 
Lebesgue measure is  $\rho_0$.
Consider a sequence of initial conditions $(\eta_0^{(N)})_{N\in \nn}$, with
$\eta_0^{(N)} \in \{0,1\}^{\mathbb{T}^d_N}$, such that the associated
empirical measures $\nu_0^{(N)}$ converge to $\nu_0$. 
Let $\rho_t(\theta)$ be the solution of the diffusion equation $\eqref{eq:HE}$, with
diffusion coefficient $D$ given by equation \eqref{eq:D_eps0}, and $\nu_t$ the measure
with density $\rho_t$.
For $s\ge0$, denote by $\nu_s^{(\varepsilon,N)}$ the (random) empirical measure associated with
the Kob-Andersen model on $\mathbb{T}^d_N$ with $\varepsilon$-soft constraints
at (microscopic) time $s$, with the initial configuration $\eta_0^{(N)}$.

Then there exists a sequence $(\varepsilon_N)_{N\in \nn}$ for which $\nu_{N^2 s}^{(\varepsilon_N,N)}$ converges
in probability to $\nu_t$ as $N$ tends to infinity.
\end{prop}
\begin{rem}\label{rem:isotropic}
In general, the diffusion coefficient is a matrix given by (see \cite[Propositions 2.1 and 2.2]{Spohn2012IPS})
\[
D_{\alpha\beta}=\lim_{t\rightarrow\infty}\frac{1}{t}\,\frac{1}{2\rho(1-\rho)}\sum_{x\in\zz^{d}}x_{\alpha}x_{\beta}\left(\mu_{\rho}(\eta(0)\,e^{t\mathcal{L}}\eta(x))-\rho^{2}\right).
\]
The reason that $D^{(\varepsilon)}(\rho)$ in equation (\ref{eq:vy})
is a real number, is that in our case $D$ is a scalar matrix: the
dynamics is invariant under inversion of a single coordinate (i.e.,
$x\mapsto x-(2x\cdot e_{\alpha})\,e_{\alpha}$), and therefore, if
$\alpha\neq\beta$, the sum $\sum_{x\in\zz^{d}}x_{\alpha}x_{\beta}(\mu(\eta(0)\,e^{t\mathcal{L}}\eta(x))-\rho^{2})$
must vanish. That is, $D$ is a diagonal matrix. Since the dynamics
is also invariant under permutation of coordinates, all diagonal elements
are equal, i.e., $D$ is scalar. This fact is useful for the analysis
of the limiting PDE in the proof of Proposition \ref{prop:epsilon_decreases_with_N}.
\end{rem}

The main result of this paper is that $D$ is strictly positive,
so that the hydrodynamic limit is not degenerate, i.e., the density
profile evolves over diffusive time scales.
\begin{thm}
\label{thm:main}For all $\rho\in(0,1)$, 
\begin{align*}
D(\rho)\ge & \begin{cases}
C/\exp\left(\lambda\log(1/(1-\rho))^{2}\,(1-\rho)^{-1/(d-1)}\right) & k=2,\\
C/\exp^{k-1}\left(\lambda (1-\rho)^{-1/(d-k+1)}\right) & k\ge3,
\end{cases}\\
D(\rho)\le & C'/\exp^{k-1}(\lambda'(1-\rho)^{-1/(d-k+1)}),
\end{align*}
where $\exp^{k}(\cdot)$ is the $k$-th iterate of the exponential.
The constants $C,C',\lambda,\lambda'$ are all strictly positive,
and may depend only on $d$ and $k$.
\end{thm}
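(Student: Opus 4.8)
The plan is to work throughout with the variational formula (\ref{eq:vy}), starting from the observation that its $\varepsilon\to0$ limit is governed by the \emph{unsoftened} model. Indeed $c_{x,y}^{(\varepsilon)}\ge c_{x,y}$ pointwise, and for each fixed local $f$ the expectation in (\ref{eq:vy}) is bounded and converges, as $\varepsilon\to0$, by dominated convergence, to the same expectation with $c^{(\varepsilon)}$ replaced by $c$; a routine interchange of limit and infimum then turns (\ref{eq:D_eps0}) into
\[
D(\rho)=\frac{1}{2\rho(1-\rho)}\,\inf_{f}\,\mu_{\rho}\!\left[\sum_{\alpha}c_{0,e_{\alpha}}\Bigl(\delta_{\alpha,1}\bigl(\eta(e_{1})-\eta(0)\bigr)-\sum_{x\in\zz^{d}}\grad_{0,e_{\alpha}}\tau_{x}f\Bigr)^{2}\right].
\]
Thus positivity of $D$ and both quantitative bounds are statements about this single infimum. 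Since the model is non-gradient (Appendix \ref{sec:appendix}) there is no closed form for it, so the two inequalities must be produced separately --- the upper one by exhibiting a near-optimal $f$, the lower one by a dual argument. In both cases the governing scale is the critical length $\ell_{c}=\ell_{c}(q)=\exp^{k-1}\!\bigl(\Theta(q^{-1/(d-k+1)})\bigr)$ of bootstrap percolation and of \cite{MST19KA} (with an extra $\log(1/q)^{2}$ in the exponent when $k=2$); the polynomial factor $\tfrac{1}{2\rho(1-\rho)}$ is harmless and will be absorbed into the sub-leading constants.

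For the upper bound I would take a test function $f$ supported in a box $\Lambda_{L}$ with $L$ slightly larger than $\ell_{c}$, built from the way the KA dynamics rearranges $\Lambda_{L}$ when it is not internally frozen. The key combinatorial input, from the bootstrap-percolation estimates behind \cite{MST19KA} and \cite{ToninelliBiroliFisher}, is that the event that $\Lambda_{L}$ carries no configuration of vacancies able to sweep it has $\mu_{\rho}$-probability decaying super-polynomially in $L/\ell_{c}$. On the complementary (typical) event one can route the constrained current across $(0,e_{1})$ through $\Lambda_{L}$, so that $\delta_{\alpha,1}(\eta(e_{1})-\eta(0))-\sum_{x}\grad_{0,e_{\alpha}}\tau_{x}f$ is made small by choosing $f$ to implement this routing, while on the frozen event one pays only $O(1)$; morally this is the relaxation-time \emph{lower} bound of \cite{MST19KA} in the form $D\lesssim L^{2}/\tau_{L}$. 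Inserting this $f$ and optimizing $L$ (so that $L$ is $\ell_{c}$ times a small power of $\log\ell_{c}$) yields $D(\rho)\le C'/\exp^{k-1}(\lambda'q^{-1/(d-k+1)})$, with the $\log^{2}$ correction for $k=2$ inherited from $\ell_{c}$, and any extra polynomial loss absorbed by slightly decreasing $\lambda'$.

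For the lower bound I would use the finite-volume approximation of the diffusion coefficient, $D(\rho)=\lim_{L\to\infty}D_{L}(\rho)$ (see \cite{KipnisLandim,VaradhanYau97}), together with the elementary Poincar\'e-type inequality $D_{L}(\rho)\ge c_{d}\,L^{2}\,\mathrm{gap}_{L}$, where $\mathrm{gap}_{L}$ is the spectral gap of the KA dynamics with $\floor{\rho L^{d}}$ particles in $\Lambda_{L}$ --- restricted, if need be, to the configurations that are not permanently blocked, which is precisely where the rarity of blocked configurations from \cite{ToninelliBiroliFisher} (equivalently, the role of the $\varepsilon$-softening) enters. The analytic input is then the relaxation-time estimate of \cite{MST19KA}: in the relevant range of volumes it gives $\mathrm{gap}_{L}\ge L^{-2}/\exp^{k-1}(\lambda q^{-1/(d-k+1)})$ (with the $\log^{2}$ exponent for $k=2$), once transferred from the grand-canonical to the canonical setting uniformly for densities near $\rho$ by the standard comparison. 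Letting $L\to\infty$ gives $D(\rho)\ge c_{d}/\exp^{k-1}(\lambda q^{-1/(d-k+1)})$, and in particular $D(\rho)>0$.

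I expect the lower bound to be the main obstacle. Constructing a good test function for the upper bound is technical but follows a well-trodden scheme and leans on the comparatively soft, bootstrap-percolation side of the KA combinatorics. The lower bound must instead rule out \emph{every} local $f$: this forces one through the finite-volume approximation of the diffusion coefficient for a genuinely non-gradient model, with a quantitatively explicit comparison to the spectral gap; it requires a careful treatment of the permanently blocked sectors; and it ultimately rests on importing the sharp relaxation-time bounds of \cite{MST19KA} --- including the anomalous $\log(1/q)^{2}$ factor in the two-dimensional-type case --- in exactly the form the comparison demands.
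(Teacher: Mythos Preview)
Your upper-bound sketch is broadly in the spirit of the paper's Section~4 --- construct a test function supported on a box at the bootstrap scale and use bootstrap-percolation estimates to control the variational expression --- though the paper's $f$ is much more explicit than you indicate (equation~(\ref{eq:testfunction}): it counts ``relevant'' sites, defined via connectivity in the bootstrap-percolation span to a central slab, weighted $\pm1$ according to whether they lie in the left or right half of the box). One small correction: in the theorem as stated the $\log(1/q)^{2}$ factor appears only in the \emph{lower} bound for $k=2$; the upper bound $D\le C'/\exp^{k-1}(\lambda'q^{-1/(d-k+1)})$ is uniform in $k$.

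Your lower bound, however, has a genuine gap. You invoke an ``elementary Poincar\'e-type inequality'' $D_{L}(\rho)\ge c_{d}\,L^{2}\,\mathrm{gap}_{L}$, but no such inequality is standard, and I do not see why it should hold for a non-gradient conservative system: the variational problem (\ref{eq:vy}) and the spectral-gap variational problem are genuinely different, and comparing them in this direction is not a routine step (nor do \cite{KipnisLandim,VaradhanYau97} supply it). The paper takes a completely different route and never passes through the spectral gap. It works directly with the variational formula and compares it, via a path argument, to the diffusion coefficient of a random walk on the infinite cluster of a supercritical bond percolation. Concretely: one coarse-grains to the sublattice $L\zz^{d}$ with $L$ as in (\ref{eq:lowerboundL}); the multistep moves of \cite{MST19KA,ES20KAtagged} (Lemma~\ref{lem:multistepmove}) furnish, on a $\nu$-typical environment $\omega$, a legal KA sequence of $T\le CL^{\lambda}$ exchanges implementing $\eta\mapsto\eta^{0,Le_{\alpha}}$, with controlled loss of information; declaring a coarse edge open when such a move is available yields a stationary ergodic percolation dominating a high-density Bernoulli process. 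Proposition~\ref{prop:pathargument} then shows that for \emph{every} local $f$ the quantity inside (\ref{eq:vy}) dominates, up to a factor $L^{-\lambda}$ coming from $T$ and the loss of information, the analogous coarse-grained expression for some local $g$; and Corollary~\ref{cor:coarsegrained_annealed} (resting on \cite{Faggionato08}) bounds the latter below by the strictly positive diffusion constant $\overline{D}_{0}$ of the random walk on the cluster. This variational-to-variational comparison is the heart of the lower bound, and your proposal does not contain it.
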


\section{Proof of Proposition \ref{prop:epsilon_decreases_with_N}}

The proof is based on the results of \cite{FunakiUchiyamaYau,Bernardin02,VaradhanYau97},
together with the continuity of the solution $\rho_t(\theta)$ with respect to the diffusion coefficient.

Denote by $\rho^{(\varepsilon)}_t$ the solution of the diffusion equation
\eqref{eq:HE} with diffusion coefficient $D^{(\varepsilon)}$ given by
equation \eqref{eq:vy}. The existence and uniqueness of $\rho_t$ and
$ \rho_t^{\varepsilon}$, as well the maximum principle, comes from the
theory of parabolic equations (see, e.g., \cite{Vazquez07PME})
\footnote{Most works treat the equation on $\rr^d$ rather than
the torus. Nonetheless, the results we need hold also for  the equation
on $\mathbb{T}^d$, see the discussion in Section 11.5 of \cite{Vazquez07PME}.}.
Indeed, since $D$ is bounded and isotropic (see Remark \ref{rem:isotropic}), we 
may define a continuous increasing positive function 
$\Phi(u) = \int_0^u D(\rho) \dd \rho$, 
allowing us to write equation
 \eqref{eq:HE} as
\[
\partial_t \rho = \Laplacian \Phi(\rho),
\]
known as the \emph{generalized porous medium equation}, or the
\emph{filtration equation}. It is the subject of \cite{BenilanCrandall81}, 
and discussed thoroughly in \cite{Vazquez07PME}.

The main tool we use is:
\begin{thm}[\cite{FunakiUchiyamaYau,Bernardin02,VaradhanYau97}] \label{thm:HL}
For any smooth test function $f$ on $\mathbb{T}^d$,
\[
\int f(\theta) \dd \nu^{(\varepsilon,N)}_{N^2 t}(\theta) \xrightarrow {N\to\infty} \int f(\theta) \rho^{\varepsilon}_t(\theta) \dd \theta
\]
in probability.
\end{thm}

In addition, we need to know that, for small $\varepsilon$, the profile
$\rho^{(\varepsilon)}_t$ is close to $\rho_t$. This problem is analyzed
in \cite{BenilanCrandall81} in a much more complicated setting, where $D(\rho)$
may approach $0$ in some points of space. Since we only consider the
case where $\rho$ is bounded away from $1$, the assumptions of \cite{BenilanCrandall81}
are easily verified, yielding:
\begin{claim} \label{claim:solution_continuity}
For all $t>0$
\[
\rho^{(\varepsilon)}_t \xrightarrow {\varepsilon \to 0} \rho_t
\]
in $L^1(\mathbb{T}^d)$.
\end{claim}

In order to prove Proposition \ref{prop:epsilon_decreases_with_N}, we will fix
a dense countable family $\{f_m\}_{m\in\nn}$ of bounded functions on $\mathbb{T}^d$. Then,
as discussed in \cite[Chapter 4.1]{KipnisLandim}, it suffices to show that
\begin{equation}
P \left[
\left | \int f_m(\theta) \dd \nu_{N^2 t}^{(\varepsilon_N,N)}(\theta)
-
\int f_m(\theta) \rho_t(\theta) \dd \theta \right | > \delta
\right]
\xrightarrow {N \to \infty} 0
\end{equation}
for any fixed $\delta>0$ and all $m\in \nn$, for an appropriately chosen
sequence $\{\varepsilon_N\}$.

An immediate corollary of Theorem \ref{thm:HL} and Claim \ref{claim:solution_continuity} is:
\begin{cor}
Fix $M > 0$. Then there exists $\varepsilon(M),N(M)$ such that,
for all $m\le M$ and $N \ge N(M)$,
\[
P \left[
\left | \int f_m(\theta) \dd \nu_{N^2 t}^{(\varepsilon(M),N)}(\theta)
-
\int f_m(\theta) \rho_t(\theta) \dd \theta \right |
> \delta \right ]
\le \frac{1}{M}.
\]
Moreover, we may assume $\varepsilon(M) \to 0$ and $N(M) \to \infty$ as $M \to \infty$. 
\end{cor}
\begin{proof}
By Claim \ref{claim:solution_continuity}, for all $m$ 
there exists $\varepsilon^*(m)$ such that
\[
\left | \int f_m(\theta) \rho_t^{(\varepsilon)}(\theta) \dd \theta
-
\int f_m(\theta) \rho_t(\theta) \dd \theta \right | < \delta/2
\]
for all $\varepsilon < \varepsilon^*(m)$.
We will choose $\varepsilon(M) = \min_{m<M} \varepsilon^*(m) \wedge \frac{1}{M}$.

By  Theorem \ref{thm:HL}, for any $m$ and any $\varepsilon$ there exists $N(m,\varepsilon)$,
such that if $N \ge N(m,\varepsilon)$ then
\[
P \left[
\left | \int f_m(\theta) \dd \nu_{N^2 t}^{(\varepsilon,N)}(\theta)
-
\int f_m(\theta) \rho_t^{(\varepsilon)}(\theta) \dd \theta \right |
> \delta /2 \right ]
\le \frac{1}{M}.
\]
Define $N(M) = \max_{m \le M} N(m,\varepsilon(M)) \vee M$.
This concludes the proof of the corollary.
\end{proof}

We are now ready to choose our sequence $\varepsilon_N$:
\begin{align*}
M_N &= \max \{M : N \ge N(M) \},\\
\varepsilon_N &= \varepsilon(M_N).
\end{align*}
Then, indeed,
\[
P \left [
\left | \int f_m(\theta) \dd \nu_{N^2 t}^{\varepsilon_N,N}(\theta)
-
\int f_m(\theta) \rho(\theta) \dd \theta \right |
> \delta \right ]
\le \frac{1}{M_N}
\]
for all $m \le M_N$. This concludes the proof of the proposition. \qed

%
%
%

\section{\label{sec:lowerbound}Proof of the lower bound}

The purpose of this section is to prove 
\begin{align}
D^{(0)} & \ge L^{-\lambda},\label{eq:lowerbound}\\
L & =\begin{cases}
C\exp\left(\lambda\log(1/q)^{2}\,q^{-1/(d-1)}\right) & k=2,\\
C\exp^{k-1}\left(\lambda q^{-1/(d-k+1)}\right) & k\ge3,
\end{cases}\label{eq:lowerboundL}
\end{align}
where for convenience we denote $1-\rho=q$.
Throughout the section $\lambda$ and $C$ denote generic positive
constants, depending only on $k$ and $d$, that may be updated
from one line to the other. This will prove the first
inequality of Theorem \ref{thm:main} since $D\ge D^{(0)}$.

The proof is based on a comparison to the diffusion coefficient of
a random walk on the infinite component of a percolation cluster.
The idea behind the proof, is that even though at small scale particles
are blocked, at a large scale there is high probability that somewhere
a droplet containing many empty sites could approach the particle
allowing it to move; and this is the scale which determines the diffusion
coefficient. This mechanism is constructed in \cite{MST19KA,ES20KAtagged}
using the notion of a \emph{multistep move} -- a sequence of exchanges,
all allowed for the KA dynamics, moving a particle with the aid of
a nearby vacancies.

We start by providing the exact definition of a multistep move (see
also \cite{MST19KA}):
\begin{defn}[multistep move]
Fix $\mathcal{M}\subseteq\Omega$ and $T\in\nn$. A \emph{$T$-step move}
$M$ with \emph{domain} $\mathcal{M}$ is a function from $\mathcal{M}$ to $\left(\Omega\times\zz^{d}\times\{\pm e_{1},\dots,\pm e_{d},0\}\right)^{T+1}$,
described by a sequence of functions $M=\left\{ \eta_{t}(\eta),x^{t}(\eta),e^{t}(\eta)\right\} _{t=0}^{T}$,
such that, for all $\eta\in\mathcal{M}$,
\begin{enumerate}
\item $\eta_{0}(\eta)=\eta$,
\item for all $t\in\{1,\dots,T\}$, $\eta_{t}(\eta)=\eta_{t-1}(\eta)^{x^{t},x^{t}+e^{t}}$,
\item for all $t\in\{1,\dots,T\}$, $c_{x^{t},x^{t}+e^{t}}(\eta_{t}(\eta))=1$,
where by convention we set $c_{x,x}(\eta)=1$ for all $x,\eta$.
\end{enumerate}
\end{defn}
\begin{description}
\item[Warning] $t$ in the above definition does \emph{not} stand
for the (continuous) time in which the process evolves. It is a 
discrete variable indexing steps in a multistep move.
\end{description}
\begin{defn}
Fix a $T$-step move $M$ with domain $\mathcal{M}$. Then, for $t\in\{1,\dots,T\}$,
the \emph{loss of information} at time $t$, denoted $\loss_{t}(M)$,
is defined as 
\[
2^{\loss_{t}(M)}=\sup_{\eta'\in\mathcal{M}}\#\{\eta\in\mathcal{M}:\eta_{t}(\eta)=\eta_{t}(\eta'),x^{t}(\eta)=x^{t}(\eta'),e^{t}(\eta)=e^{t}(\eta')\}.
\]
We also set $\loss(M)=\sup_{t}\loss_{t}(M)$.
\end{defn}

The multistep move that we will define will allow us to move a particle
at $x$ to the site $x+Le_{\alpha}$ ($\alpha\in\{1,\dots,d\}$).
The choice of $L$ in equation (\ref{eq:lowerboundL}) guarantees
that such a multistep move could indeed be applied. Note that
$C, \lambda$ can be chosen such that $L \in \nn$.

We will therefore consider the coarse grained lattice $\zz_{L}^{d}=L\zz^{d}$,
and split a generic configuration $\eta$ in two -- the occupation of the
sites of $\zz_{L}^{d}$ denoted $\etab\in\overline{\Omega}_L=\left\{ 0,1\right\} ^{\zz_{L}^{d}}$,
and that of the sites outside $\zz_{L}^{d}$ denoted $\etah\in\left\{ 0,1\right\} ^{\zz^{d}\setminus\zz_{L}^{d}}$.
We will also split the measure in two, such that $\etab$ distributes
according to $\mub$ and $\etah$ according to $\muh$. Note that both measures $\mub,\muh$
are Bernoulli product measure with parameter $\rho$ (which is implicit in the notation).
The coarse grained lattice has a graph structure (isomorphic to $\zz^{d}$),
i.e., two vertices $i,j$ are connected by an edge if $i-j\in\left\{ \pm\eb_{1},\dots,\pm\eb_{d}\right\} $,
where $\eb_{\alpha}=Le_{\alpha}$. We denote the edge set by $\mathcal{E}(\zz_{L}^{d})$. 

Next, we will define a multistep move allowing particles to move on the coarse
grained lattice $\zz_{L}^{d}$, that is, it will exchange the occupation
at $Li$ with the occupation at $Lj$ for some edge $(i,j)\in\mathcal{E}(\zz_L^d)$.
In order to achieve that, there must be sufficiently many empty sites organized
in a proper fashion in the region between $i$ and $j$. We can think of an edge
in $\mathcal{E}(\zz_{L}^{d})$ satisfying this condition as \emph{open}, and otherwise
\emph{closed}, defining a percolation process on $\zz_L^d$. An important property
of this construction is that the conditions we require for an edge to be open or
closed will only depend on the occupation \emph{outside} $\zz_L^d$, namely $\etah$.
\begin{lem}
\label{lem:multistepmove}There exist a percolation process $\cb(\etah)\in\Pi=\{0,1\}^{\mathcal{E}(\zz^{d}_L)}$
and $T$-step moves $M^{\pm\eb_{1}}, \dots,  \linebreak[1] M^{\pm\eb_{d}}$ such that:
\begin{enumerate}
\item The process $\cb_{ij}$ is stationary and ergodic (with respect to $\muh$),
and dominates a supercritical Bernoulli percolation\footnote{Domination here
means that there exists a supercritical Bernoulli percolation process
on $\zz_L^d$ whose set of open edges is contained in the set of open edges given by $\cb$.}
 uniformly in $q$.
\item $T\le CL^{\lambda}$.
\item For any $\eb\in \{\pm\eb_{1},\dots,\eb_{d}\}$ the move $M^{\eb}$ satisfies:
\begin{enumerate}
\item The domain of $M^{\eb}$, $\dom M^{\eb}$, consists of the configurations
in which $\cb_{0,\eb}=1.$
\item $2^{\loss(M^{\eb})}\le C\,L^{\lambda}$.
\item For any $\eta\in\text{Dom}M^{\eb}$, denoting $M^{\eb}=\left\{ \eta_t(\eta),x^{t}(\eta),e^{t}(\eta)\right\} _{t=0}^{T}$,
at the final configuration
\[
\eta_{T}(\eta)=\eta^{0,\eb}.
\]
 
\end{enumerate}
\end{enumerate}
\end{lem}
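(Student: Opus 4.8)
The plan is to build the percolation process $\cb$ and the moves $M^{\eb}$ from the droplet mechanism of \cite{MST19KA,ES20KAtagged}. The basic object is a \emph{droplet}: a region of side comparable to $\ell=q^{-1/(d-k+1)}$ (for $k\ge 3$, with the appropriate iterated-exponential rescaling; for $k=2$ the relevant length is $q^{-1/(d-1)}$ up to a $\log(1/q)$ correction), which, when it contains enough emptiness, can be ``moved'' one step by a bounded-length sequence of legal KA exchanges, and which can also ``engulf'' an adjacent occupied site and spit it out on the other side. The choice of $L$ in \eqref{eq:lowerboundL} is exactly the scale at which, inside a box of side $L$, a legal droplet appears with probability $1-o(1)$; this is a standard bootstrap-percolation/East-like estimate and is where $L$ comes from. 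First I would fix, for each coarse edge $(i,j)\in\mathcal{E}(\zz_L^d)$, a ``corridor'' of width $O(\ell)$ in the original lattice joining the two $L$-boxes, and declare $\cb_{ij}=\cb_{ij}(\omega)=1$ when the restriction of $\omega$ to that corridor (together with a small region near $i$ and near $j$) is ``good'' in the sense of \cite{MST19KA}: it contains a droplet that can be transported along the corridor, and has enough local emptiness near the endpoints to absorb and re-emit the particle sitting on $\zz_L^d$. Since the corridors attached to different edges can be taken disjoint (widen $L$ by a further constant factor if necessary) and $\omega$ is a product measure, $\cb$ is automatically stationary under the $\zz_L^d$-translations and $m$-dependent, hence ergodic; and the single-corridor good event has probability $1-o(1)$ as $q\to0$, so $\cb$ dominates a high-parameter Bernoulli process — this gives item (1).

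For item (3), given $\eta\in\dom M^{\eb}$ (so $\cb_{0,\eb}=1$), I would concatenate the elementary droplet moves of \cite{MST19KA}: (i) use the emptiness guaranteed near $0$ to grow a droplet there, (ii) slide it out along the corridor to the neighbourhood of $\eb$ using the one-step droplet-displacement sequences, (iii) now that the droplet is adjacent to $\eb$, perform the ``particle-transfer'' sub-move that carries the occupation variable from $0$ to $\eb$ through the droplet, and finally (iv) reverse steps (i)–(ii) to restore the droplet and all auxiliary sites, ending in the configuration $\eta^{0,\eb}$. Every exchange in this concatenation is legal by construction (this is precisely what ``good corridor'' buys us), giving property (3)(c). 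The total number of elementary steps is (number of corridor positions) $\times$ (cost of one droplet displacement) $= O(L)\cdot O(\ell^{d})$, which is $\le CL^{\lambda}$, giving item (2) and (3)(a). For (3)(b), the loss of information is controlled exactly as in \cite{MST19KA}: at each step the only ambiguity in inverting $\eta\mapsto(\eta_t,x_t,e_t)$ comes from the finitely many sites of $\omega$ inside the current droplet that were overwritten and not yet restored; since the droplet has $O(\ell^d)$ sites, $2^{\loss_t(M^{\eb})}\le 2^{O(\ell^d)}\le CL^{\lambda}$ uniformly in $t$.

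The main obstacle, and the place where genuine work from \cite{MST19KA,ES20KAtagged} must be imported rather than re-derived, is the construction of the elementary droplet moves themselves and the verification that they can be chained together without ever violating a KA constraint — in particular that the ``absorb and re-emit the particle on $\zz_L^d$'' sub-move is legal, since near $\zz_L^d$ the occupation pattern is not under our control (that is $\etab$, which is arbitrary), so the droplet must be large enough and positioned so that the $k$-empty-neighbour condition holds at every intermediate step regardless of the neighbouring $\etab$-values. Getting the geometry of the corridor and the droplet right so that this holds for all four (or $2d$) directions $\eb$ simultaneously, and simultaneously keeping the corridors for distinct edges disjoint so that $\cb$ factorizes nicely over $\omega$, is the delicate combinatorial core; once it is in place, the bookkeeping for $T$ and $\loss$ is routine.
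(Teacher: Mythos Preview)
Your proposal takes essentially the same approach as the paper: the paper's own proof of this lemma is the single sentence ``The lemma is proven in \cite{ES20KAtagged}, lemmas 3.9 and 3.14. See also \cite{MST19KA},'' and you have written out a sketch of what those references supply while explicitly deferring the combinatorial core to the same sources.

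One caveat worth flagging: your loss estimate $2^{\loss_t}\le 2^{O(\ell^{d})}\le CL^{\lambda}$ does not go through as written for $k=2$, since in that case $\ell^{d}\asymp q^{-d/(d-1)}$ while $\log L\asymp \log(1/q)^{2}\,q^{-1/(d-1)}$, and the former is much larger for small $q$. The actual loss control in \cite{MST19KA,ES20KAtagged} is finer than a raw count of droplet sites --- the multistep move is structured so that at each intermediate time only $O(\log L)$ bits of the original configuration are genuinely unrecoverable from $(\eta_t,x_t,e_t)$. Since you already delegate that construction to the references, this is an imprecision in the sketch rather than a gap in the strategy, but you should not present the $2^{O(\ell^d)}$ bound as the reason (3)(b) holds.
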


\begin{proof}
The lemma is proven in \cite{ES20KAtagged}, Lemmas 3.9 and 3.14.
See also \cite[Section 3.4.1]{MST19KA}.
The reader may note that in the proof of  \cite{ES20KAtagged} $q$ is assumed small, but
since the relevant probabilities estimated are monotone in $q$
one may discard this assumption by adjusting the constants $C,\lambda$
in equation \eqref{eq:lowerboundL}.
\end{proof}
\begin{rem}\label{rem:droplet_picture}
The reason for the iterated exponential scaling of $D(\rho)$ hides
in the proof of Lemma \ref{lem:multistepmove}, and explained in details
in \cite{ES20KAtagged,MST19KA,ToninelliBiroliFisher}. It is based
on induction over both $k$ and $d$, of two different scales.

The first scale, $l(k,d)$, is the scale at which cluster of empty sites
could typically advance. For $k=1$, for example, the constraint is
always satisfied and $l(1,d)=1$. Perhaps more interesting is the
case $k=d=2$, where a column of empty sites of length $l$ could
move if there is an empty site in a neighboring column (see Figure
\ref{fig:droplet_propagation}). The probability to have a vacancy
in the neighboring column is $1 - (1-q)^l$, hence this event
becomes likely when choosing $l(2,2)\approx1/q$.
This is the scale of the \emph{droplets},
which are those empty clusters of size $l$ that are able to move
in $\zz^{d}$.

\begin{figure}
\begin{tikzpicture}[scale=0.4, every node/.style={scale=0.8}]
    \def \x{0}
    \def \y{0}	
	\draw[step=1, gray, very thin, shift={(\x,\y)}] (0,0) grid +(2,4);

	\foreach \yy in {0,1,2,3}{
		\draw[shift={(\x,\y)}] (0.5,\yy+0.5) node[black] {$0$};
	}
	\draw[shift={(\x,\y)}]  (1.5,0.5) node[black] {$0$};
		
	\draw[->,shift={(\x,\y)}]  (3,2) to (5,2);
		
    \def \x{6}
    \def \y{0}	
	\draw[step=1, gray, very thin, shift={(\x,\y)}] (0,0) grid +(2,4);

	\foreach \yy in {0,1,2,3}{
		\draw[shift={(\x,\y)}] (0.5,\yy+0.5) node[black] {$0$};
	}
	
	\draw[shift={(\x,\y)}]  (1.5,1.5) node[black] {$0$};
		
	\draw[->,shift={(\x,\y)}]  (3,2) to (5,2);
	
	\def \x{12}
    \def \y{0}	
	\draw[step=1, gray, very thin, shift={(\x,\y)}] (0,0) grid +(2,4);

	\foreach \yy in {1,2,3}{
		\draw[shift={(\x,\y)}] (0.5,\yy+0.5) node[black] {$0$};
	}
	
	\draw[shift={(\x,\y)}]  (1.5,0.5) node[black] {$0$};	
	\draw[shift={(\x,\y)}]  (1.5,1.5) node[black] {$0$};
		
	\draw[->,shift={(\x,\y)}]  (3,2) to (5,2);
	
	\def \x{18}
    \def \y{0}	
	\draw[step=1, gray, very thin, shift={(\x,\y)}] (0,0) grid +(2,4);

	\foreach \yy in {0,1,2}{
		\draw[shift={(\x,\y)}] (1.5,\yy+0.5) node[black] {$0$};
	}
	
	\draw[shift={(\x,\y)}]  (0.5,2.5) node[black] {$0$};	
	\draw[shift={(\x,\y)}]  (0.5,3.5) node[black] {$0$};
		
	\draw[->,shift={(\x,\y)}]  (3,2) to (5,2);
	
	\def \x{24}
    \def \y{0}	
	\draw[step=1, gray, very thin, shift={(\x,\y)}] (0,0) grid +(2,4);

	\foreach \yy in {0,1,2,3}{
		\draw[shift={(\x,\y)}] (1.5,\yy+0.5) node[black] {$0$};
	}
	
	\draw[shift={(\x,\y)}]  (0.5,2.5) node[black] {$0$};	
		
%
%
%
%
%
%
%
		
\end{tikzpicture}
\caption{\label{fig:droplet_propagation}Droplet propagation. Sites marked with $0$ are empty, the other sites could be either empty or occupied. We see that in a sequence of \emph{unconstrained} transitions the empty column moves to the right.}
\end{figure}

The second scale, $L(k,d)$, is the typical distance of an arbitrary site to a droplet,
so $L(k,d)\approx q^{-l(k,d)}$. If we look at a particle and consider
its neighborhood at scale $L(k,d)$, we are likely to find a droplet,
that would be able to move to the vicinity of that particle and help
it jump.

In order to understand the scaling of $D(\rho)$, we should understand
the two scales $l(k,d)$ and $L(k,d)$. Consider the set $[1,L(k-1,d-1)]^{d}$.
If we empty the entire boundary of this set, it could serve as a droplet
-- take, for example, the surface $\{0\}\times[1,L(k-1,d-1)]^{d-1}$.
This is a $d-1$ dimensional surface, and each of its sites has an
empty neighbor to the right coming from $[1,L(k-1,d-1)]^{d}$. Therefore,
any move for the KA dynamics with parameters $k-1,d-1$ could be applied
to that surface. Since its size is $L(k-1,d-1)$, it is likely to
contain a droplet. Hence, using this droplet, we are able to move
freely the sites on the surface. With slightly more careful analysis,
it could be shown that by rearranging the sites on $\{0\}\times[1,L(k-1,d-1)]^{d-1}$
the set $[1,L(k-1,d-1)]^{d}$ could ``swallow'' this surface, thus
moving one step to the left. That is, $[1,L(k-1,d-1)]^{d}$ is, indeed,
a droplet; and so $l(k,d)\approx L(k-1,d-1)$.

The two relations, $L(k,d)\approx q^{-l(k,d)}$ and $l(k,d)\approx L(k-1,d-1)$,
show that the two scales indeed behave as an iterated exponential.
The scaling of the diffusion coefficient could then be explained heuristically,
if we imagine that the particles are mostly blocked, except those
in the vicinity of a droplet. Since the sites that are able to move
have density $L^{-d}$, the diffusion coefficient scales polynomially
in $L$.
\end{rem}

An immediate consequence of point one of Lemma \ref{lem:multistepmove}
is that the graph induced by the open edges (i.e., for which $\cb$ equals $1$)
has a unique infinite connected component. Let $\mathcal{C}$ denote
this infinite component. In \cite{Faggionato08} (see also \cite{Spohn90SelfDiff}),
it is shown that the diffusion coefficient of a random walk on $\mathcal{C}$
is given by the following variational formula:
\[
\overline{D}=\inf_{\psi}\sum_{\alpha}\muh\left[\cb_{0,e_{\alpha}}\left(\delta_{\alpha,1}+\psi(\tau_{\eb_{\alpha}}\cb)-\psi(\cb)\right)^{2}|0\in\mathcal{C},\eb_{\alpha}\in\mathcal{C}\right],
\]
where the infimum is taken over local functions $\psi:\Pi\rightarrow\rr$
(namely, functions that depend on finitely many edges).

The input we need from \cite{Faggionato08,DeMasiFerrariGoldsteinWick}
is the positivity of the diffusion coefficient:
\begin{lem}
\label{lem:RWoncluster}There exists $\overline{D}_{0}>0$ such that
for all local $\psi:\Pi\to\rr$ and all $\rho\in(0,1)$, 
\[
\sum_{\alpha}\muh\left[\cb_{0,e_{\alpha}}\left(\delta_{\alpha,1}+\psi(\tau_{\eb_{\alpha}}\cb)-\psi(\cb)\right)^{2}\right]\ge\overline{D}_{0}.
\]
\end{lem}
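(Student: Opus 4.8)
The plan is to deduce Lemma~\ref{lem:RWoncluster} from the known positivity of the self-diffusion (or bond-conductance) coefficient for supercritical i.i.d.\ bond percolation on $\zz^d$, together with the domination statement in point~(1) of Lemma~\ref{lem:multistepmove}. The quantity on the left-hand side is, up to the conditioning appearing in the variational formula for $\overline D$, exactly the Dirichlet-form expression whose infimum over local $\psi$ is the effective diffusion coefficient of the random walk (or the random-conductance model with conductances $\cb_{ij}\in\{0,1\}$) on the cluster $\mathcal C$. So the content of the lemma is: this effective coefficient is bounded below by a constant $\overline D_0>0$ that is \emph{uniform in $\rho$} (equivalently in $q$), for all $q$ small enough.

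First I would make the reduction precise. Fix $q$ small enough that part~(1) of Lemma~\ref{lem:multistepmove} applies, so that $\cb$ is stationary, ergodic, and stochastically dominates Bernoulli bond percolation with some parameter $p(q)\ge p_0$, where $p_0<1$ can be taken as close to $1$ as we like (certainly $p_0>p_c(\zz^d)$, the critical threshold for bond percolation, which is $<1$ in every dimension $d\ge 2$; for $d=1$ one reduces to a one-dimensional chain and the statement is trivial or vacuous, so assume $d\ge2$). Under this domination the results of \cite{Faggionato08,DeMasiFerrariGoldsteinWick} give that the symmetric exclusion / random walk on the unique infinite cluster $\mathcal C$ has a strictly positive effective diffusion matrix; by the coordinate symmetries noted in the remark after \eqref{eq:vy} (which descend to $\cb$ because the construction of $\cb$ in \cite{ES20KAtagged} is lattice-symmetric) this matrix is scalar, and equals $\overline D$ with the variational characterization displayed just before the lemma. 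Then the left-hand side of the inequality in the lemma, for any $\psi$ depending on finitely many edges, is at least
\[
\nu\!\left[\cb_{0,e_1}\big(1+\psi(\tau_{\eb_1}\cb)-\psi(\cb)\big)^2\,\Big|\,0\in\mathcal C,\ \eb_1\in\mathcal C\right]\cdot\nu(0\in\mathcal C,\ \eb_1\in\mathcal C)\ \ge\ \overline D\cdot\nu(0\in\mathcal C,\ \eb_1\in\mathcal C),
\]
since on the event $\{0\notin\mathcal C\}\cup\{\eb_1\notin\mathcal C\}$ either $\cb_{0,e_1}=0$ or the term is nonnegative anyway, and the remaining $\alpha\ge2$ terms are nonnegative. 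So it suffices to lower-bound $\overline D$ and $\nu(0\in\mathcal C,\ \eb_1\in\mathcal C)$ uniformly.

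The second, and I expect the main, obstacle is the \textbf{uniformity in $q$} of the lower bound on $\overline D$. The cited percolation results give $\overline D>0$ for each fixed supercritical law of $\cb$, but a priori the constant could degrade as $q\to0$ if, say, the domination parameter $p(q)$ deteriorated — it does not, by part~(1), since $p(q)\to1$. I would therefore use monotonicity of the Dirichlet form in the conductances: $\cb$ dominates i.i.d.\ bond percolation $\cb'$ with parameter $p_0$, and the effective diffusion coefficient is monotone nondecreasing under adding edges (this is the standard conductance-monotonicity of effective resistance/diffusivity, provable by the variational formula via a coupling that sets $\psi$ to be the minimizer for the larger environment, or by Rayleigh monotonicity). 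Hence $\overline D(\cb)\ge \overline D(\text{Bernoulli}(p_0))=:\overline D_0>0$, and $\overline D_0$ depends only on $d$ and the fixed choice $p_0$, not on $q$. The small technical care needed is (i) to set up the monotonicity coupling so that the conditioning on $\{0,\eb_1\in\mathcal C\}$ behaves well — one works with the full-lattice Dirichlet form and uses that $\mathcal C(\cb)\supseteq\mathcal C(\cb')$ up to the respective finite clusters — and (ii) to note $\nu(0\in\mathcal C,\ \eb_1\in\mathcal C)\ge \theta(p_0)^2>0$ by FKG/domination, where $\theta$ is the percolation probability, again a $q$-independent bound. Combining, $\sum_\alpha\nu[\cdots]\ge \overline D_0\,\theta(p_0)^2>0$ with a constant depending only on $d$ and $k$, which is the claim. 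The only place one must be slightly careful is that "depends on finitely many edges" for $\psi$ is what makes the variational infimum equal to $\overline D$ (no closure issues), and the monotonicity argument stays within this class, so nothing is lost.
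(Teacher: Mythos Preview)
Your approach is essentially the paper's own one-line proof: both invoke point~(1) of Lemma~\ref{lem:multistepmove} (stationary, ergodic, dominating supercritical Bernoulli percolation with parameter bounded away from $p_c$ uniformly in $q$) together with the positivity result \cite[Lemma~2.1]{Faggionato08}, and your monotonicity-via-domination discussion is exactly the mechanism behind the uniform-in-$\rho$ constant. One correction: after dropping the $\alpha\ge2$ terms you claim the single conditioned $\alpha=1$ term is $\ge\overline D$, but $\overline D$ is the infimum of the \emph{sum} over $\alpha$, so one summand need not dominate it; instead keep all $\alpha$ through the conditioning step, use that $\nu(0\in\mathcal C,\ \eb_\alpha\in\mathcal C)$ is the same for every $\alpha$ by the lattice symmetries, and then bound the full conditioned sum below by $\overline D$ directly from its definition.
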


\begin{proof}
This is a direct consequence of \cite[Lemma 2.1]{Faggionato08} and
the first point of Lemma \ref{lem:multistepmove}.
\end{proof}
In order to relate the diffusion coefficient given in equation (\ref{eq:vy})
to $\overline{D}$, we use the following proposition:
\begin{prop}
Fix a local function $g:\overline{\Omega}_L\times\Pi\rightarrow\rr$.
Then there exists a local function $\psi:\Pi\rightarrow\rr$, such
that 
\begin{multline*}
\sum_{\alpha=1}^{d}\muh\left[\cb_{0,\eb_{\alpha}}\left(\delta_{\alpha,1}+\psi(\tau_{\eb_{\alpha}}\cb)-\psi(\cb)\right)^{2}\right]\le\frac{1}{2\rho(1-\rho)}\times\\
\sum_{\alpha=1}^{d}\mub\otimes\muh\left[\cb_{0,\eb_{\alpha}}\left(\delta_{\alpha,1}(\etab(\eb_{1})-\etab(0))-\sum_{i\in\zz_{L}^{d}}\overline{\grad}_{0,\eb_{\alpha}}\,g(\tau_{i}\etab,\tau_{i}\cb)\right)^{2}\right],
\end{multline*}
where $\overline{\grad}$ is the gradient operating only on $\etab$
(that is, $\overline{\grad}_{0,\eb_{\alpha}}\,g(\tau_{i}\etab,\tau_{i}\cb)=g(\tau_{i}\,\etab^{0,\eb_{\alpha}},\tau_{i}\cb)-g(\tau_{i}\etab,\tau_{i}\cb)$).
\end{prop}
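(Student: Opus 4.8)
The plan is to build $\psi$ out of $g$ by running the multistep moves of Lemma \ref{lem:multistepmove}. The point is that the right-hand side is (up to the factor $\frac{1}{2\rho(1-\rho)}$) the Dirichlet form of the soft dynamics restricted to the coarse grained lattice, applied to a test function of the form $\sum_i g(\tau_i\etab,\tau_i\cb)$, and the left-hand side is the Dirichlet form of the random walk on the percolation cluster $\cb$. So we want to "integrate out" the $\etab$ variable: given a configuration $(\etab,\cb)$ with $\cb_{0,\eb_\alpha}=1$, the move $M^{\eb_\alpha}$ is a sequence of KA-allowed exchanges (hence allowed for the $\varepsilon$-soft dynamics with rate $1$) which transforms $\etab$ into $\etab^{0,\eb_\alpha}$ while leaving $\omega$, and hence $\cb$, unchanged. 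The natural guess is therefore
\[
\psi(\cb) = \ee\!\left[\,\sum_{i\in\zz_L^d} g(\tau_i\etab,\tau_i\cb)\,\Big|\,\cb\,\right],
\]
i.e. the conditional expectation of the candidate test function given the edge configuration, where $\etab$ is distributed according to $\mub$. With this choice, $\delta_{\alpha,1}+\psi(\tau_{\eb_\alpha}\cb)-\psi(\cb)$ on the left is, after conditioning, exactly the $\cb$-measurable part of $\delta_{\alpha,1}(\etab(\eb_1)-\etab(0))-\sum_i\overline{\grad}_{0,\eb_\alpha}g(\tau_i\etab,\tau_i\cb)$ on the right, and a conditional-Jensen argument then gives the inequality with the $\frac{1}{2\rho(1-\rho)}$ coming from the fact that each coarse-grained exchange is realized by a chain of microscopic exchanges, each contributing a bounded Dirichlet cost.

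The key steps, in order: (i) define $\psi$ as above and check it is local (here one uses that $g$ is local and that $\cb$ has finite range, so only finitely many edges matter); (ii) compute $\psi(\tau_{\eb_\alpha}\cb)-\psi(\cb)$ and recognize it as a conditional expectation of a telescoping sum — the shift by $\eb_\alpha$ reindexes the sum over $i$, and after a change of variables the difference collapses to conditional expectations of $\overline{\grad}_{0,\eb_\alpha}g$ terms together with the $\delta_{\alpha,1}(\etab(\eb_1)-\etab(0))$ drift, exactly mirroring the structure inside the square on the right; (iii) apply the conditional Jensen inequality, $\ee[X\mid\cb]^2\le\ee[X^2\mid\cb]$, then integrate against $\nu$ and sum over $\alpha$; (iv) bound the passage from coarse-grained exchanges to microscopic ones using Lemma \ref{lem:multistepmove}: the multistep move $M^{\eb_\alpha}$ has $T\le CL^\lambda$ steps and loss of information $2^{\loss(M^{\eb_\alpha})}\le CL^\lambda$, so by a standard path-lemma / Cauchy–Schwarz argument the Dirichlet cost of one coarse exchange is at most $\mathrm{poly}(L)$ times the sum of the microscopic Dirichlet costs along the move; since $L$ depends only on $q$ (hence on $\rho$), this polynomial factor is harmless and gets absorbed. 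The $\frac{1}{2\rho(1-\rho)}$ normalization on the right is precisely what is needed to match the normalization implicit in the microscopic Dirichlet form.

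The main obstacle I expect is step (iv): turning "$\etab\to\etab^{0,\eb_\alpha}$ via the multistep move" into a clean Dirichlet-form comparison. One must write $\overline{\grad}_{0,\eb_\alpha}\big(\sum_i g(\tau_i\etab,\tau_i\cb)\big)$ as a telescoping sum over the $T$ intermediate configurations $\eta_t(\etab)$ of the move, i.e.
\[
g(\cdot,\etab^{0,\eb_\alpha})-g(\cdot,\etab)=\sum_{t=1}^{T}\big(g(\cdot,\eta_t)-g(\cdot,\eta_{t-1})\big)=\sum_{t=1}^{T}\grad_{x_t,x_t+e_t}g(\cdot,\eta_{t-1}),
\]
and then control the change of measure between $\mub$-law of $\etab$ and $\mub$-law of $\eta_{t-1}(\etab)$; this is where $\loss_t(M)$ enters, bounding how many $\etab$'s are mapped to the same intermediate configuration, so that each term $\mub[(\grad_{x_t,x_t+e_t}g)^2]$ is comparable to $2^{\loss_t(M)}$ times the relevant microscopic Dirichlet term, which is itself $\le$ the full Dirichlet form since $c_{x_t,x_t+e_t}(\eta_{t-1})=1$. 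Everything else — locality, Jensen, summation — is routine bookkeeping; the move-surgery and the accompanying combinatorial accounting of the loss of information is the part that requires care.
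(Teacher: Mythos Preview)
Your proposal has two genuine gaps.

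\textbf{First, your choice of $\psi$ does not work.} With $\psi(\cb)=\mub\big[\sum_i g(\tau_i\etab,\tau_i\cb)\big]$, translation invariance of $\mub$ forces $\psi(\tau_{\eb_\alpha}\cb)=\psi(\cb)$: after reindexing $j=i+\eb_\alpha$ one gets $\psi(\tau_{\eb_\alpha}\cb)=\mub\big[\sum_j g(\tau_j(\tau_{-\eb_\alpha}\etab),\tau_j\cb)\big]$, and $\tau_{-\eb_\alpha}\etab\stackrel{d}{=}\etab$ under $\mub$. So the left-hand side collapses to $\nu[\cb_{0,\eb_1}]$, a positive constant independent of $g$. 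Meanwhile your step (iii) cannot recover this constant: the $\mub$-expectation of the expression inside the square on the right is $\delta_{\alpha,1}\,\mub[\etab(\eb_1)-\etab(0)]-\sum_i\mub[\overline{\grad}_{0,\eb_\alpha}g]=0-0=0$, so conditional Jensen only yields the vacuous bound $0\le\text{RHS}$. The mismatch is structural: the drift on the left is the constant $\delta_{\alpha,1}$, while on the right it is the mean-zero random variable $\delta_{\alpha,1}(\etab(\eb_1)-\etab(0))$, and no amount of averaging will turn one into the other. The paper's fix is to weight by $\etab(0)$, taking $\psi(\cb)=2\,\mub\big[\etab(0)\sum_i g(\tau_i\etab,\tau_i\cb)\big]$; this breaks the translation symmetry in exactly the right way, yielding $\psi(\tau_{\eb_\alpha}\cb)-\psi(\cb)=\mub\big[2\etab(0)\sum_i\overline{\grad}_{0,\eb_\alpha}g(\tau_i\etab,\tau_i\cb)\big]$. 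The proof then expands both squares into constant, cross, and quadratic parts and compares term by term: the constant term produces the factor $1/(2\rho(1-\rho))=1/\mub[(\etab(\eb_1)-\etab(0))^2]$, the cross terms match exactly, and the quadratic term is handled by Jensen.

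\textbf{Second, step (iv) does not belong in this proposition.} Both sides of the stated inequality live entirely on the coarse lattice $\zz_L^d$: the right-hand side already involves the coarse exchange $\overline{\grad}_{0,\eb_\alpha}$, not microscopic KA transitions, and there is nothing to telescope through a multistep move. The factor $1/(2\rho(1-\rho))$ is a variance normalization, not a path-lemma constant. The multistep moves of Lemma~\ref{lem:multistepmove}, the telescoping over $t=1,\dots,T$, and the loss-of-information accounting are all used in Proposition~\ref{prop:pathargument}, which is the \emph{separate} step comparing the microscopic Dirichlet form to the coarse-grained one; you have conflated the two propositions.
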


\begin{proof}
Note first that the sum $\sum_{i\in\zz_{L}^{d}}\overline{\grad}_{0,\eb_{\alpha}}\,g(\tau_{i}\etab,\tau_{i}\cb)$
is finite (and hence well defined) since $g$ is local. We are therefore
allowed, throughout the proof, to replace it by a sum over a large
enough torus $\mathbb{T}_{N,L}^{d}=\zz_{L}^{d}/N\zz_{L}^{d}$ for large
$N$ (depending on $g$). We start by writing the left hand side of
the inequality as 
\[
\sum_{\alpha=1}^{d}\muh\left[\cb_{0,\eb_{\alpha}}\left(\text{I}+\text{II}+\text{III}\right)\right],
\]
\begin{align*}
\text{I} & =\delta_{\alpha,1},\\
\text{II} & =2\delta_{\alpha,1}\left(\psi(\tau_{\eb_{1}}\cb)-\psi(\cb)\right),\\
\text{III} & =\left(\psi(\tau_{\eb_{\alpha}}\cb)-\psi(\cb)\right)^{2};
\end{align*}
and the right hand side (noting that $\cb$ depends only on $\etah$
and not on $\etab$) as 
\[
\sum_{\alpha=1}^{d}\muh\left[\cb_{0,\eb_{\alpha}}(\text{I}'+\text{II}'+\text{III}')\right],
\]
\begin{align*}
\text{I}' & =\mub\left[\delta_{\alpha,1}(\etab(\eb_{1})-\etab(0))^{2}\right],\\
\text{II}' & =-2\delta_{\alpha,1}\,\mub\left[(\etab(\eb_{1})-\etab(0))\,\sum_{i\in\mathbb{T}_{N,L}^{d}}\overline{\grad}_{0,\eb_{1}}\,g(\tau_{i}\etab,\tau_{i}\cb)\right],\\
\text{III}' & =\mub\left[\left(\sum_{i\in\mathbb{T}_{N,L}^{d}}\overline{\grad}_{0,\eb_{\alpha}}\,g(\tau_{i}\etab,\tau_{i}\cb)\right)^{2}\right].
\end{align*}

We now compare term by term. The term $\text{I},\text{I}'$ do not
depend on $\psi$: $\text{I}'=\delta_{\alpha,1}2\rho(1-\rho),$ so
indeed $\text{I}\le\frac{1}{2\rho(1-\rho)}\text{I}'$.

For the other terms we need to specify our choice of $\psi$:
\[
\psi(\cb)=2\mub\left[\etab(0)\sum_{i\in\mathbb{T}_{N,L}^{d}}g(\tau_{i}\etab,\tau_{i}\cb)\right].
\]

Fix $\eb\in\left\{ \eb_{1},\dots,\eb_{d}\right\} $, and note that $\tau_i \cb$
depends only on $\etah$ for any $i\in\zz^d_L$.
Then 
\begin{align*}
\psi(\tau_{\eb}\cb) & =2\mub\left[\etab(0)\sum_{i\in\mathbb{T}_{N,L}^{d}}g(\tau_{i}\etab,\tau_{i+\eb}\cb)\right]
=2\mub\left[\etab(\eb)\sum_{i\in\mathbb{T}_{N,L}^{d}}g(\tau_{i+\eb}\etab,\tau_{i+\eb}\cb)\right]\\
 & =2\mub\left[\etab(0)\sum_{i\in\mathbb{T}_{N,L}^{d}}g(\tau_{i}\,\etab^{0,\eb},\tau_{i}\cb)\right],
\end{align*}
and thus 
\begin{equation}
\psi(\tau_{\eb_{\alpha}}\cb)-\psi(\cb)
=\mub\left[2\etab(0)\sum_{i\in\mathbb{T}_{N,L}^{d}}\overline{\grad}_{0,\eb_{\alpha}}\,g(\tau_{i}\,\etab,\tau_{i}\cb)\right].\label{eq:translatios_to_grad}
\end{equation}
Observe now that $\etab(0)=\etab(\eb_{1})$ implies $\overline{\grad}_{0,\eb_{1}}\,g(\tau_{i}\,\etab,\tau_{i}\cb)=0$,
and otherwise $\etab(\eb_{1})=1-\etab(0)$, yielding 
\begin{align*}
(\etab(\eb_{1})-\etab(0))\overline{\grad}_{0,\eb_{1}}\,g(\tau_{i}\,\etab,\tau_{i}\cb) & =(1-2\etab(0))\overline{\grad}_{0,\eb_{1}}\,g(\tau_{i}\,\etab,\tau_{i}\cb).
\end{align*}
Therefore, by equation \eqref{eq:translatios_to_grad}, 
\[
\mub\left[(\etab(\eb_{1})-\etab(0))\sum_{i\in\mathbb{T}_{N,L}^{d}}\overline{\grad}_{0,\eb_{1}}\,g(\tau_{i}\,\etab,\tau_{i}\cb)\right]=\sum_{i\in\mathbb{T}_{N,L}^{d}}\mub\left[\overline{\grad}_{0,\eb_{1}}\,g(\tau_{i}\,\etab,\tau_{i}\cb)\right]-(\psi(\tau_{\eb_{1}}\cb)-\psi(\cb)),
\]
and noting that $\mub\left[\overline{\grad}_{0,\eb_{1}}\,g(\tau_{i}\,\etab,\tau_{i}\cb)\right]=0$
(the gradient of any function has $0$ expected value), we obtain
\[
\text{II}=\text{II}'.
\]
Finally, for the last term we use again equation (\ref{eq:translatios_to_grad}),
together with Jensen's inequality and the fact that $\etab(0)^{2}\le1$:
\[
\text{III}\le\mub\left[\left(2\etab(0)\sum_{i\in\mathbb{T}_{N,L}^{d}}\overline{\grad}_{0,\eb_{\alpha}}\,g(\tau_{i}\,\etab,\tau_{i}\cb)\right)^{2}\right]\le4\,\text{III}'. \qedhere
\]
\end{proof}
\begin{cor}
\label{cor:coarsegrained_annealed}For all local $g:\overline{\Omega}_L\times\Pi\rightarrow\rr$,
\[
\frac{1}{2\rho(1-\rho)}\sum_{\alpha=1}^{d}\mub\otimes\muh\left[\cb_{0,\eb_{\alpha}}\left(\delta_{\alpha,1}(\etab(\eb_{1})-\etab(0))-\sum_{i\in\zz_{L}^{d}}\overline{\grad}_{0,\eb_{\alpha}}\,g(\tau_{i}\etab,\tau_{i}\cb)\right)^{2}\right]\ge\overline{D}_{0},
\]
where $\overline{D}_{0}$ is the positive constant given in Lemma
\ref{lem:RWoncluster}.
\end{cor}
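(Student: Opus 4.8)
The plan is to chain together the two results that have just been established. By the Proposition, given any local $g:\overline{\Omega}\times\Pi\to\rr$, there exists a local $\psi:\Pi\to\rr$ such that
\[
\sum_{\alpha=1}^{d}\nu\left[\cb_{0,\eb_{\alpha}}\left(\delta_{\alpha,1}+\psi(\tau_{\eb_{\alpha}}\cb)-\psi(\cb)\right)^{2}\right]\le\frac{1}{2\rho(1-\rho)}\sum_{\alpha=1}^{d}\mub\otimes\nu\left[\cb_{0,\eb_{\alpha}}\left(\delta_{\alpha,1}(\etab(\eb_{1})-\etab(0))-\sum_{i\in\zz_{L}^{d}}\overline{\grad}_{0,\eb_{\alpha}}\,g(\tau_{i}\etab,\tau_{i}\cb)\right)^{2}\right].
\]
On the other hand, Lemma \ref{lem:RWoncluster} asserts that the left-hand side is bounded below by $\overline{D}_0>0$, uniformly over all such $\psi$ and all $\rho\in(0,1)$. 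Composing these two inequalities gives precisely the claimed bound for this particular $g$, and since $g$ was an arbitrary local function, the corollary follows.

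Concretely, I would write: fix a local $g$; apply the Proposition to obtain the associated local $\psi$; apply Lemma \ref{lem:RWoncluster} to that $\psi$ to get $\sum_\alpha\nu[\cdots]\ge\overline{D}_0$; and then read the Proposition's inequality from right to left to conclude that the $\mub\otimes\nu$ expression is at least $\overline{D}_0$ as well. One small point worth stating explicitly is that the $\psi$ produced by the Proposition is indeed local (it is a finite $\mub$-average of translates of the local function $g$ over the finite torus $\mathbb{T}_L^d$), so it is an admissible competitor in Lemma \ref{lem:RWoncluster}; this is already implicit in the Proposition's statement but it is the hinge that makes the two results fit together.

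There is essentially no obstacle here — the corollary is a two-line deduction whose entire content has been front-loaded into the Proposition and into Lemma \ref{lem:RWoncluster}. If anything, the only thing to be careful about is not to conflate the unrestricted average $\nu[\,\cdot\,]$ appearing in Lemma \ref{lem:RWoncluster} with the conditional expectation $\nu[\,\cdot\mid 0\in\mathcal{C},\eb_\alpha\in\mathcal{C}\,]$ from the variational formula for $\overline{D}$: Lemma \ref{lem:RWoncluster} has already performed that translation (the conditioning only changes things by a bounded factor, since $\cb$ dominates supercritical Bernoulli percolation by point one of Lemma \ref{lem:multistepmove}), so we may take its statement at face value. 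The role of this corollary in the larger argument is then clear: it transfers the positivity of the coarse-grained random-walk diffusion coefficient to an ``annealed'' lower bound phrased in terms of functions of the pair $(\etab,\cb)$, which is the form needed to eventually compare with the Varadhan--Yau formula (\ref{eq:vy}) for $D^{(0)}$ after unfolding the multistep moves of Lemma \ref{lem:multistepmove}.
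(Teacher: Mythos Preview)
Your proposal is correct and is exactly the intended argument: the paper does not even write out a proof, since the corollary is immediate from chaining the preceding Proposition with Lemma~\ref{lem:RWoncluster}. Your remark that $\psi$ is local is fine but in fact unnecessary here, since Lemma~\ref{lem:RWoncluster} is stated for \emph{all} $\psi:\Pi\to\rr$, not just local ones.
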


The next step of the proof is to use the multistep move given in Lemma
\ref{lem:multistepmove} in order to compare $\overline{D}_{0}$ with
$D^{(0)}$ (recalling equation \eqref{eq:vy}).
\begin{prop}
\label{prop:pathargument}Fix a local function $f:\Omega\rightarrow\rr$.
Then there exists a local function $g:\overline{\Omega}_L\times\Pi\rightarrow\rr$
such that 
\begin{multline*}
\mu\left(\sum_{\alpha=1}^{d}c_{0,e_{\alpha}}(\eta)\left(\delta_{\alpha,1}(\eta(e_{\alpha})-\eta(0))-\sum_{x\in\zz^{d}}\grad_{0,e_{\alpha}}(\tau_{x}f)\right)^{2}\right)\ge\\
L^{-\lambda}\,\sum_{\alpha=1}^{d}\mub\otimes\muh\left[\cb_{0,\eb_{\alpha}}\left(\delta_{\alpha,1}(\etab(\eb_{1})-\etab(0))-\sum_{i\in\zz_{L}^{d}}\overline{\grad}_{0,\eb_{\alpha}}\,g(\tau_{i}\etab,\tau_{i}\cb)\right)^{2}\right].
\end{multline*}
\end{prop}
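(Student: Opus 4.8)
The strategy is a path (or ``flow'') argument: given the optimal-type local function $f$ for the microscopic Dirichlet form, we must build a coarse-grained function $g$ on $\overline{\Omega}\times\Pi$ whose coarse-grained Dirichlet form is controlled by the microscopic one, losing only a polynomial-in-$L$ factor. The natural candidate is to \emph{average $f$ over the droplet configuration}: roughly, $g(\etab,\cb)$ should be a suitable conditional expectation (with respect to $\nu$, the law of $\omega$) of $f(\eta)$, where $\eta$ is reconstructed from $\etab$ and $\omega$. The point of the multistep moves $M^{\eb}$ from Lemma \ref{lem:multistepmove} is that the coarse-grained exchange $\overline{\grad}_{0,\eb_{\alpha}}g$ — which physically moves a particle from $0$ to $\eb_{\alpha}$ on the coarse lattice — can be realized, on the event $\cb_{0,\eb_{\alpha}}=1$, by the sequence of $T$ legal microscopic exchanges prescribed by $M^{\eb_{\alpha}}$. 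So the coarse-grained gradient telescopes into a sum of $T$ microscopic gradients along the path, each of which appears (after translation) in the microscopic Dirichlet form.

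First I would fix the candidate $g$ precisely: decompose $\mu_\rho = \mub\otimes\nu$ along $\zz_L^d$ versus its complement, and set $g(\etab,\cb) = \nu[\, f(\eta)\mid \cb\,]$ where $\eta$ is the configuration with $\zz_L^d$-marginal $\etab$ and complement distributed as $\omega$ conditioned on producing $\cb(\omega)=\cb$; locality of $g$ follows from locality of $f$ and of $\cb$. Next, on the event $\cb_{0,\eb_\alpha}=1$ (so $\eta\in\dom M^{\eb_\alpha}$), I would write the difference $g(\etab^{0,\eb_\alpha},\cb)-g(\etab,\cb)$, using point 3(c) of Lemma \ref{lem:multistepmove} ($\eta_T(\eta)=\eta^{0,\eb_\alpha}$), as a telescoping sum $\sum_{t=1}^{T}\big(\Phi(\eta_t)-\Phi(\eta_{t-1})\big)$ for the appropriate functional $\Phi$ built from $f$; each increment is a microscopic gradient $\grad_{x_t,x_t+e_t}$ applied to a translate of $f$, at a configuration where the constraint $c_{x_t,x_t+e_t}=1$. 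Then I would apply Cauchy--Schwarz in $t$ (paying a factor $T\le CL^\lambda$) and change variables in the expectation to line each term up with the corresponding term $c_{0,e_\beta}(\eta)\big(\dots-\sum_x\grad_{0,e_\beta}\tau_x f\big)^2$ of the microscopic form. The loss-of-information bound 3(b), $2^{\loss(M^{\eb})}\le CL^\lambda$, is exactly what is needed here: when we change variables $\eta\mapsto\eta_t(\eta)$ inside $\nu$ (or $\mub\otimes\nu$), this map is not injective, and the Radon--Nikodym factor of the pushforward is bounded by $2^{\loss_t(M)}\le CL^\lambda$, so each transplanted term is at most $CL^\lambda$ times a genuine summand of the microscopic Dirichlet form. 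Collecting the $\alpha,t$ sums and the various polynomial losses gives the claimed $L^{-\lambda}$ (with a new generic $\lambda$).

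The $\delta_{\alpha,1}(\etab(\eb_1)-\etab(0))$ drift term needs separate but easy bookkeeping: the multistep move $M^{\eb_1}$ physically transports the occupation variable from $0$ to $\eb_1$, so the coarse-grained current $\delta_{\alpha,1}(\etab(\eb_1)-\etab(0))$ is matched, along the path, by the telescoping of the microscopic current $\delta_{\beta,1}(\eta(e_1)-\eta(0))$ summed over the steps of the path that cross the relevant hyperplane — here one uses that the net displacement of the move is $\eb_1=Le_1$, so exactly the right number of unit steps in direction $e_1$ occur and their currents add up correctly. I would incorporate this by keeping the full expression $\delta_{\beta,1}(\eta(e_\beta)-\eta(0))-\sum_x\grad_{0,e_\beta}\tau_x f$ as a single ``generalized gradient'' throughout the telescoping, rather than splitting drift from fluctuation.

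The main obstacle is the change-of-variables step and making the loss-of-information bound do its job cleanly: one must be careful that $g$ depends only on $(\etab,\cb)$ and not on the full $\omega$ (otherwise the telescoping along $M^{\eb_\alpha}$, which manipulates $\omega$-sites too, does not descend to a coarse-grained gradient), and simultaneously that the microscopic exchanges $\grad_{x_t,x_t+e_t}$ in the path — which may act on $\zz_L^d$-sites \emph{and} on complement-sites — get correctly re-expressed, after translation by some $x\in\zz^d$, as gradients of $\tau_x f$ evaluated under $\mu_\rho$ with the constraint present. Handling the interplay between the averaging that defines $g$ and the path that defines the telescoping — essentially, commuting a conditional expectation past a sum of exchanges — is the delicate point; the loss bound 3(b) is the quantitative tool that controls the error incurred.
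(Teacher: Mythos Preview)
Your overall strategy is correct and matches the paper's: define $g$ as a conditional expectation built from $f$, use Jensen's inequality to pass from $\mub\otimes\nu$ to $\mu$, telescope the coarse-grained gradient $\overline{\grad}_{0,\eb_\alpha}$ along the multistep move $M^{\eb_\alpha}$, apply Cauchy--Schwarz in $t$ (losing a factor $T\le CL^\lambda$), and control the change of variables $\eta\mapsto\eta_t(\eta)$ by the loss-of-information bound $2^{\loss(M^{\eb_\alpha})}\le CL^\lambda$. The idea of handling the drift term via the total particle flow along the move is also exactly right. The worry you raise at the end about ``commuting a conditional expectation past a sum of exchanges'' is not the real difficulty: one simply applies Jensen \emph{before} telescoping, so no commuting is needed.

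The genuine gap is your choice of $g$. With $g(\etab,\cb)=\nu[f(\eta)\mid\cb]$, the coarse-grained sum $\sum_{i\in\zz_L^d}\overline{\grad}_{0,\eb_\alpha}g(\tau_i\etab,\tau_i\cb)$ only involves translates of $f$ by the sublattice $\zz_L^d$, whereas the microscopic form involves $\sum_{x\in\zz^d}\grad_{0,e_\beta}\tau_x f$ over all of $\zz^d$. After telescoping, the step-$t$ function part becomes $\sum_{i\in\zz_L^d}\grad_{0,e_t}\tau_{i-x_t}f(\eta_t)$, a sum over the coset $\zz_L^d-x_t$; since $x_t\notin\zz_L^d$ in general, this cannot be rewritten as $\sum_{x\in\zz^d}\grad_{0,e_t}\tau_x f$, and the comparison with the microscopic integrand fails. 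The paper repairs this by setting
\[
g(\etab,\cb)=\mu\Bigl[\tfrac{1}{L}\sum_{y\in[L]^d}\tau_y f(\eta)\,\Bigm|\,\etab,\cb\Bigr],
\]
so that $\sum_{i\in\zz_L^d}\sum_{y\in[L]^d}\tau_{i+y}f=\sum_{x\in\zz^d}\tau_x f$ restores the full lattice sum. The prefactor $1/L$ plays a second, equally essential role: after pulling it outside the square one is forced to rewrite the drift as $L\,\delta_{\alpha,1}(\etab(\eb_1)-\etab(0))$, which by the particle-flow identity equals $\sum_t e_1\cdot e_t\,\tau_{x_t}(\eta_t(e_t)-\eta_t(0))$. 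Only then do the drift and the telescoped function part combine, step by step, into translated copies of the microscopic integrand $e_1\cdot e_\beta(\eta(e_\beta)-\eta(0))-\sum_x\grad_{0,e_\beta}\tau_x f$; without both the cell average and the $1/L$, the argument does not close.
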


\begin{proof}
Let $g(\etab,\cb)=\mu\left[\frac{1}{L}\sum_{y\in\left[L\right]^{d}}\tau_{y}f(\eta)\middle|\etab,\cb\right]$.
We use Lemma \ref{lem:multistepmove} in order to write, for all $x\in\zz^{d}$
and $\alpha\in\left\{ 1,\dots,d\right\} $, denoting 
$M^{\eb_{\alpha}}=\left\{ \eta_{t}(\eta),x^{t}(\eta),e^{t}(\eta)\right\} _{t=0}^{T}$,
\begin{equation}
\overline{\grad}_{0,\eb_{\alpha}}\tau_{x}f=\sum_{t=1}^{T}\grad_{x^{t},x^{t}+e^{t}}\tau_{x}f(\eta_{t})=\sum_{t=1}^{T}\tau_{x^{t}}\,\grad_{0,e^{t}}\tau_{x-x^{t}}f(\eta_{t}).
\end{equation}
We also note that the total particle flow (defined as the change in $\sum_{x}x\eta(x)$)
can be decomposed along the $T$-step move. In more details, using the fact that
$\eta_0$ and $\eta_T$ agree outside $\{0,\eb_\alpha\}$,
\[
\sum_x x (\eta_0(x) - \eta_T(x)) = L e_\alpha \left(\etab(\eb_\alpha)-\etab(0)\right).
\]
On the other hand, at step $t$ the configuration changes only at $x^t$ and $x^t+e^t$, therefore
\[
\sum_x x (\eta_{t-1}(x) - \eta_{t}(x))
     = e^t (\eta_{t}(x^t) - \eta_{t}(x^t+e^t))
     = e^t \tau_{x^t} (\eta_{t}(0) - \eta_{t}(e^t)),
\]
implying
\[
\sum_x x (\eta_0(x) - \eta_T(x)) 
	= \sum_{t=1}^T \sum_x x (\eta_{t-1}(x) - \eta_{t}(x))
	= \sum_{t=1}^T e^t \tau_{x^t} (\eta_{t}(0) - \eta_{t}(e^t)).
\]
That is,
\begin{equation}
Le_{\alpha}\left(\etab(\eb_{\alpha})-\etab(0)\right)
	= \sum_{t=1}^{T}e^{t}\tau_{x^{t}}\left(\eta^{t}(e^{t})-\eta_{t}(0)\right).
\end{equation}

Using these two identities, the Cauchy-Schwarz inequality, and the
properties of the move, we obtain 
\begin{multline*}
\sum_{\alpha=1}^{d}\mub\otimes\muh\left[\cb_{0,\eb_{\alpha}}\left(e_{1}\cdot e_{\alpha}(\etab(\eb_{\alpha})-\etab(0))-\sum_{i\in\zz_{L}^{d}}\overline{\grad}_{0,\eb_{\alpha}}\,g(\tau_{i}\etab,\tau_{i}\cb)\right)^{2}\right] \\
\le
\frac{1}{L^{2}}\sum_{\alpha=1}^{d}\mu\left[\cb_{0,\eb_{\alpha}}\left(e_{1}\cdot Le_{\alpha}(\etab(\eb_{1})-\etab(0))-\sum_{i\in\zz_{L}^{d}}\overline{\grad}_{0,\eb_{\alpha}}\,\sum_{y\in\left[L\right]^{d}}\tau_{i+y}f(\eta)\right)^{2}\right]\\
=
\frac{1}{L^{2}}\sum_{\alpha=1}^{d}\mu\left[\cb_{0,\eb_{\alpha}}\left(e_{1}\cdot\sum_{t=1}^{T}e^{t}\tau_{x^{t}}\left(\eta_{t}(e^{t})-\eta_{t}(0)\right)-\sum_{x\in\zz^{d}}\sum_{t=1}^{T}\tau_{x^{t}}\,\grad_{0,e^{t}}\tau_{x-x^{t}}f(\eta_{t})\right)^{2}\right]\\
\le
\frac{T}{L^{2}}\sum_{t=1}^{T}\sum_{\alpha=1}^{d}\mu\left[\cb_{0,\eb_{\alpha}}\tau_{x^{t}}\,c_{0,e^{t}}(\eta_{t})\left(e_{1}\cdot e^{t}\left(\eta_t(e^t)-\eta_t(0)\right)-\sum_{x\in\zz^{d}}\grad_{0,e^t}\tau_x f(\eta_t)\right)^{2}\right]\\
\le
\frac{T}{L^{2}}\sum_{t=1}^{T}\sum_{\alpha=1}^{d}\sum_{\eta\in\Omega}\mu(\eta)\sum_{\eta'\in\Omega}\One_{\eta'=\eta_{t}}\sum_{\beta=1}^{d}\One_{e_{\beta}=e^{t}}c_{0,e_{\beta}}(\eta')\left(e_{1}\cdot e_{\beta}\left(\eta'(e_{\beta})-\eta'(0)\right)-\sum_{x\in\zz^{d}}\grad_{0,e_{\beta}}\tau_{x}f(\eta')\right)^{2} \\ 
\le
\frac{T^{2}}{L^{2}}\sum_{\alpha=1}^{d}2^{\text{Loss}(M^{\eb_{\alpha}})}\sum_{\eta'\in\Omega}\mu(\eta')\sum_{\beta=1}^{d}c_{0,e_{\beta}}(\eta')\left(e_{1}\cdot e_{\beta}\left(\eta'(e_{\beta})-\eta'(0)\right)-\sum_{x\in\zz^{d}}\grad_{0,e_{\beta}}\tau_{x}f(\eta')\right)^{2}.
\end{multline*}
The result follows by inserting the bounds for $T$ and $\loss(M)$
given in Lemma \ref{lem:multistepmove}.
\end{proof}
The proof of the lower bound (\ref{eq:lowerbound}) follows from Proposition
\ref{prop:pathargument}, Corollary \ref{cor:coarsegrained_annealed},
and the variational characterization of $D^{(0)}$ in equation (\ref{eq:vy}).\qed

\section{Proof of the upper bound}

In order to find the upper bound we will use a process tightly
related to the Kob-Andersen model, called the \emph{$k$-neighbor
bootstrap percolation} (see, e.g., \cite{Morris17BP}).
We start by defining this process, and describing some of its
basic properties.

\subsection{Bootstrap percolation}

\begin{defn}[bootstrap percolation]
Fix $V\subseteq\zz^{d}$ and $A\subseteq \zz^d$. The\emph{ bootstrap
percolation in $V$ starting from $A$} is a deterministic process
defined for $t=0,1,2,\dots$ as 
\begin{align*}
A_{0} & =A\cap V,\\
A_{t+1} & =A_{t}\cup\{x\in V:\#\{y\in A_{t}\text{ such that }y\sim x\}\ge k\}.
\end{align*}
The limit $\cup_{t\ge0}A_{t}$ is called the \emph{span of $A$ in
$V$}, and denoted by $[A]^{V}$. We say that two sites $x$ and $y$
are \emph{connected for the bootstrap percolation in $V$ starting
from $A$} if they are connected in $[A]^{V}$ (thought of as the
subgraph of $\zz^{d}$ induced by the set $[A]^{V}$), that is, if
there is a nearest neighbor path $x=x_{1},\dots,x_{n}=y$ such that
$x_{1},\dots x_{n}\in[A]^{V}$.

For $\eta\in\Omega$, we define 
\[
A_{\eta}=\{x \in \zz^{d}:\eta_{x}=0\}.
\]
We may refer to the bootstrap percolation in $V$ starting from $A_{\eta}$
as the bootstrap percolation starting from $\eta$. When context allows
we omit the explicit mention of $V$, $A$, or both.
\end{defn}

We continue with several properties of bootstrap percolation.
\begin{observation}
\label{obs:bp_monotonicity}(\emph{monotonicity}). Let $U\subseteq V\subseteq\zz^{d}$, and
fix $A\subseteq B \subseteq\zz^{d}$. Then $[A]^{U}\subseteq[A]^{V}$ and $[A]^{U}\subseteq[B]^{U}$.
\end{observation}
The following observation reveals the the connection between bootstrap percolation and
the Kob-Andersen model:
\begin{observation}
\label{obs:BPandKA}Fix $\eta\in\Omega$, and consider a set $V\subset\zz^{d}$.
Assume that, for two neighboring sites $x,y\in V$, the constraint
$c_{x,y}$ is satisfied in $V$, that is, $c_{x,y}(\eta')=1$ for any $\eta'$
that agrees with $\eta$ on $V$.
Then $[A_{\eta}]^{V}=[A_{\eta^{x,y}}]^{V}$.
\end{observation}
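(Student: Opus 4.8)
The plan is a short case analysis combined with the monotonicity and idempotency of the span operator. If $\eta(x)=\eta(y)$ then $\eta^{x,y}=\eta$ and there is nothing to prove, so I assume without loss of generality that $\eta(x)=0$ and $\eta(y)=1$, in which case $A_{\eta^{x,y}}=(A_\eta\setminus\{x\})\cup\{y\}$. The elementary remark underlying everything is that $c_{x,y}$ only inspects the neighbors of $x$ other than $y$ and the neighbors of $y$ other than $x$, so $c_{x,y}(\eta)=c_{x,y}(\eta^{x,y})$; hence the hypothesis ``$c_{x,y}=1$ in $V$'' holds for both configurations, and unravelled it says: $y$ has at least $k-1$ empty neighbors inside $V$ distinct from $x$, and $x$ has at least $k-1$ empty neighbors inside $V$ distinct from $y$.

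First I would record two facts about $[\cdot]^V$, both immediate from the definition: it is monotone in the starting set ($A\subseteq B$ implies $[A]^V\subseteq[B]^V$, since $A_t\subseteq B_t$ for all $t$ by induction), and idempotent ($[[A]^V]^V=[A]^V$, because a site with $k$ neighbors in $\bigcup_t A_t$ already has $k$ neighbors in some $A_t$, having finitely many neighbors). Together these give: if $B\subseteq[C]^V$ then $[B]^V\subseteq[C]^V$.

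Then I prove the two inclusions. For $[A_{\eta^{x,y}}]^V\subseteq[A_\eta]^V$: the site $x$ is empty in $\eta$, hence $x\in A_\eta\cap V$, and by the constraint $y$ has a further $k-1$ empty neighbors inside $V$, all in $A_\eta$; so $y$ has at least $k$ neighbors in $A_\eta\cap V$ and therefore $y\in[A_\eta]^V$. Since $A_{\eta^{x,y}}\cap V=((A_\eta\cap V)\setminus\{x\})\cup\{y\}$ and the first part lies in $A_\eta\cap V\subseteq[A_\eta]^V$, the remark above yields $[A_{\eta^{x,y}}]^V\subseteq[A_\eta]^V$. The reverse inclusion is symmetric: in $\eta^{x,y}$ the site $y$ is empty, and $x$ has the same $k-1$ empty neighbors inside $V$ as in $\eta$ (the exchange touches only $x$ and $y$), so $x$ has at least $k$ neighbors in $A_{\eta^{x,y}}\cap V$ and $x\in[A_{\eta^{x,y}}]^V$; as $A_\eta\cap V=((A_{\eta^{x,y}}\cap V)\setminus\{y\})\cup\{x\}\subseteq[A_{\eta^{x,y}}]^V$, this gives $[A_\eta]^V\subseteq[A_{\eta^{x,y}}]^V$. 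Combining the two yields the equality.

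There is no genuine obstacle here; the only point that needs a moment's care is that the $k-1$ auxiliary empty neighbors supplied by the constraint really lie inside $V$ — which is exactly what ``$c_{x,y}=1$ in $V$'' encodes — and that they are unaffected by the exchange, so that the same sites serve as spanning neighbors for both $\eta$ and $\eta^{x,y}$.
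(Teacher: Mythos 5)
Your proof is correct and follows essentially the same route as the paper: both arguments hinge on the observation that the constraint forces the occupied endpoint of the edge to be added in a single bootstrap step, and then invoke monotonicity together with idempotency of the span. The paper packages this by showing both $[A_\eta]^V$ and $[A_{\eta^{x,y}}]^V$ equal $[A_\eta\cup\{x\}]^V$, whereas you argue the two inclusions directly; the substance is the same.
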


\begin{proof}
Assume without loss of generality that $\eta(x)=1$ and $\eta(y)=0$,
and note that $[A_{\eta}]^{V}\subseteq[A_{\eta}\cup\{x\}]^{V}$. On
the other hand, since $c_{x,y}=1$ in $V$, the site $x$ will be
added to $A_{\eta}$ after a single step of the bootstrap percolation.
Denoting the set after that single step by $A'$, $[A_{\eta}\cup\{x\}]^{V}\subseteq[A']^{V}=[A_{\eta}]^{V}$.
Therefore $[A_{\eta}]^{V}=[A_{\eta}\cup\{x\}]^{V}$. The same argument
shows that $[A_{\eta^{x,y}}]^{V}=[A_{\eta}\cup\{x\}]^{V}$.
\end{proof}

\begin{observation}
\label{obs:bpcluster}Fix $A\subset\zz^{d}$, $V\subset\zz^{d}$,
and $x\in V$. Let $U$ be the set of sites connected to $x$ in $[A]^{V}$.
Then $[A]^{U}=U$.
\end{observation}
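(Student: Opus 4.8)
The plan is to establish the two inclusions $[A]^U \subseteq U$ and $U \subseteq [A]^U$ separately. The first is immediate: the bootstrap percolation in $U$ starts from $A \cap U \subseteq U$ and, by its very definition, only ever adds sites belonging to $U$, so its span stays inside $U$. All the work therefore goes into the reverse inclusion.

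Before that I would dispose of the degenerate case. By the definition of connectedness for bootstrap percolation, the trivial path already forces $x \in [A]^V$ whenever $x$ is connected to anything (including itself); so if $x \notin [A]^V$ then $U = \emptyset$ and $[A]^U = [A]^\emptyset = \emptyset = U$. Hence we may assume $x \in [A]^V$, and then $U \subseteq [A]^V$ since $U$ is the connected component of $x$ inside $[A]^V$.

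For the main inclusion, write the bootstrap percolation in $V$ from $A$ as the increasing sequence $A_0 \subseteq A_1 \subseteq \cdots$ with $\bigcup_{t \ge 0} A_t = [A]^V$, and prove by induction on $t$ that $A_t \cap U \subseteq [A]^U$. The base case holds because $A_0 \cap U = (A \cap V) \cap U = A \cap U$, which is exactly the seed of the process in $U$. For the inductive step, let $y \in A_{t+1} \cap U \setminus A_t$; then $y \in V$ and $y$ has at least $k$ neighbors in $A_t$. The key point is that every such neighbor $z$ lies in $U$: indeed $z \in A_t \subseteq [A]^V$ and $z$ is adjacent to $y \in U$, so $z$ is in the same connected component of $[A]^V$ as $x$, i.e. $z \in U$. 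Thus $y$ has at least $k$ neighbors in $A_t \cap U$, which by the induction hypothesis belong to $[A]^U$; since $y \in U$ and $[A]^U$ is stable under one further step of the bootstrap rule in $U$ (being the limit of the increasing sequence, with $y$ having only finitely many neighbors), we get $y \in [A]^U$. Taking the union over $t$ yields $[A]^V \cap U \subseteq [A]^U$, and combined with $U \subseteq [A]^V$ this is exactly $U \subseteq [A]^U$.

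The only genuinely delicate step is showing that a neighbor of a site of $U$ which already belongs to $[A]^V$ must itself lie in $U$; this is where it matters that $U$ is a full connected component of $[A]^V$ rather than an arbitrary subset, and it is really the heart of the argument. Everything else is bookkeeping with the definitions of span and connectedness, together with the (essentially tautological) fact that the span is a fixed point of the bootstrap map.
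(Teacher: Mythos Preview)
Your proof is correct and follows essentially the same approach as the paper: both arguments hinge on the observation that any neighbor of a site in $U$ which lies in $[A]^{V}$ must itself belong to $U$, since $U$ is a full connected component of $[A]^{V}$. The only cosmetic difference is that the paper phrases this as a proof by contradiction using the \emph{first} time $t$ at which some $y\in U\setminus[A]^{U}$ appears in $A_t$, whereas you run the same idea as a forward induction on $t$; your version is slightly more explicit about the degenerate case and the fixed-point property of the span, but the substance is identical.
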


\begin{proof}
Let $(A_{t})_{t\ge0}$ denote the bootstrap percolation in $V$ starting
with $A$, and assume by contradiction $[A]^{U}\subsetneq U$. Since
$U\subseteq[A]^{V}$, there exists a first time $t$ for which some
$y\in U\setminus[A]^{U}$ is contained in $A_{t}$. By minimality,
$A_{t-1}\cap U\subseteq[A]^{U}$, and since $y\notin[A]^{U}$ it has
at most $k-1$ neighbors in $A_{t-1}\cap U$. On the other hand, it
has at least $k$ neighbors in $A_{t}$. Therefore, it must have at
least one neighbor in $V\setminus U$. This is a contradiction, since
$U$ is a connected component containing $y$.
\end{proof}
\begin{claim}
\label{claim:bp_distantinfluence}Fix $A\subset\zz^{d}$. Consider
two sets $B\subset B'\subset\zz^{d}$, a site $z\in B$, and any $S\subset\zz^{d}$.
Assume that $z$ is connected to $S$ for the bootstrap percolation
in $B'$, but not for the bootstrap percolation in $B$. Then $z$
is connected to $\partial B$ for the bootstrap percolation in $B'$.
\end{claim}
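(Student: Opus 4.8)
The plan is to recast the statement in terms of the connected component of $z$ in the larger span. Write $U'$ for the set of sites connected to $z$ for the bootstrap percolation in $B'$, i.e.\ the connected component of $z$ in $[A]^{B'}$. The hypothesis that $z$ is connected to $S$ in $[A]^{B'}$ already forces $z\in[A]^{B'}$, so $U'$ is a nonempty connected (nearest-neighbour) subset of $[A]^{B'}$ with $U'\cap S\neq\emptyset$. I claim the whole statement reduces to showing $U'\not\subseteq B$. Indeed, once we know there is some $w'\in U'\setminus B$, a nearest-neighbour path inside $U'$ from $z\in B$ to $w'\notin B$ contains an edge $\{w,w''\}$ with $w\in B$ and $w''\notin B$; then $w\in\partial B$, and since $w\in U'$ the sub-path from $z$ to $w$ witnesses that $z$ is connected to $\partial B$ for the bootstrap percolation in $B'$.

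To prove $U'\not\subseteq B$ I would argue by contradiction: suppose $U'\subseteq B$. By Observation~\ref{obs:bpcluster} (applied with $V=B'$, $x=z$), a full connected component of $[A]^{B'}$ is ``self-spanning'', so $[A]^{U'}=U'$. Since $U'\subseteq B\subseteq B'$, Observation~\ref{obs:bp_monotonicity} gives $[A]^{U'}\subseteq[A]^{B}$, hence $U'\subseteq[A]^{B}$. Then $U'$ is a connected subset of $[A]^{B}$ containing $z$, so it is contained in the connected component of $z$ in $[A]^{B}$; but by hypothesis $z$ is not connected to $S$ in $[A]^{B}$, so that component --- and therefore $U'$ --- is disjoint from $S$, contradicting $U'\cap S\neq\emptyset$. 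This contradiction yields $U'\not\subseteq B$, which by the previous paragraph finishes the proof.

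There is no real combinatorial obstacle here; the only point that needs care, and exactly what Observation~\ref{obs:bpcluster} supplies, is the ``locality of spanning'': a connected component $U'$ of a span is already obtained by running bootstrap percolation inside $U'$ alone. That is the step that lets us transport the information ``$z$ is not connected to $S$ in $B$'' from the box $B$ to the component $U'$, once $U'$ is assumed to sit inside $B$. I would also remark that no separate treatment of degenerate cases is needed: if $z\notin[A]^{B}$ the connected component of $z$ in $[A]^{B}$ is empty, and under the contradiction hypothesis the inclusion $U'\subseteq[A]^{B}$ is then already absurd because $z\in U'$.
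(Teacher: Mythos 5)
Your proof is correct and takes essentially the same approach as the paper: argue by contradiction that the connected component $U'$ of $z$ in $[A]^{B'}$ sits inside $B$, apply Observation~\ref{obs:bpcluster} and monotonicity to get $U'\subseteq[A]^{B}$, and contradict the non-connectivity to $S$ in $B$. The only cosmetic difference is that you first establish $U'\not\subseteq B$ and then extract a boundary site from a path, whereas the paper directly negates the conclusion ``$z$ connected to $\partial B$''; your phrasing is a touch more careful in noting that $U'\cap S=\emptyset$ follows because $U'$ lies in the component of $z$ in $[A]^{B}$, rather than asserting $[A]^{B}\cap S=\emptyset$ outright.
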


\begin{proof}
Assume that $z$ is not connected to $\partial B$ for the bootstrap
percolation in $B'$, so in particular its connected component in
$[A]^{B'}$, denoted $U$, is entirely contained in $B$. By Observation
\ref{obs:bpcluster} and monotonicity of the bootstrap percolation,
$U=[A]^{U}\subseteq[A]^{B}$. This is a contradiction, since by assumption
$U\cap S\neq\emptyset$ but $[A]^{B}\cap S=\emptyset$.
\end{proof}

\subsection{Analysis of the test function}
We will prove the upper bound by estimating the expression inside the infimum
in equation \eqref{eq:vy} for a carefully chosen function $f$.

Recall $q=1-\rho$. The test function we will construct will depend on a scale 
\begin{equation}
l=\floor{ \exp^{k-2}(\lambda q^{-\frac{1}{d-k+1}}) }.\label{eq:lupper}
\end{equation}
Throughout the section $\lambda$ and $C$ denote generic positive
constants.
\begin{defn}[relevant sites]
Fix $\eta\in\Omega$. A site $x\in\left[-2l,2l\right]^{d}$ is called
\emph{relevant} if it is not connected to $\left\{ 0,1\right\} \times\left[-2l,2l\right]^{d-1}$
for the bootstrap percolation in $\left[-2l,2l\right]^{d}$; and otherwise
it is called \emph{irrelevant}. Denote the set of relevant sites by
$\mathcal{R}(\eta)$.
\end{defn}

We divide the box $\left[-l,l\right]^{d}$ in two parts -- the left
part $\Lambda_{-}=\left[-l,0\right]\times\left[-l,l\right]^{d-1}$,
and the right part $\Lambda_{+}=\left[1,l\right]\times\left[-l,l\right]^{d-1}$ (see Figure \ref{fig:upper_bound_geometry}).
The test function we consider is 
\begin{equation}
f(\eta)=\frac{1}{2\left(2l+1\right)^{d-1}}\left(\sum_{x\in\Lambda_{+}\cap\mathcal{R}}\eta(x)-\sum_{x\in\Lambda_{-}\cap\mathcal{R}}\eta(x)\right).\label{eq:testfunction}
\end{equation}
Hence, the purpose of this section is to prove that for $\varepsilon$
small enough
\[
\mu\left[\sum_{\alpha=1}^{d}c_{0,e_{\alpha}}^{(\varepsilon)}\left(\delta_{\alpha,1}\left(\eta(e_{1})-\eta(0)\right)-\sum_{x\in\zz^{d}}\grad_{0,e_{\alpha}}\tau_{x}f\right)^{2}\right]\le e^{-\lambda l}.
\]

\begin{rem}
The choice of $f$ in equation \eqref{eq:testfunction} seems mysterious at first sight --
Observation \ref{obs:BPandKA} explains the use of bootstrap percolation, but the
introduction of relevant sites and the exact form of $f$ are not that clear.

One way to gain more intuition on this choice of $f$ is to look more carefully at the
variational principle \eqref{eq:vy}. Ignoring the contribution of $f$, we are left with the term
\[
c_{0,e_\alpha} \left(\delta_{\alpha,1} \left(\eta(e_1) - \eta(0) \right)\right)^2.
\]
This could be though of as a contribution of the instantaneous current between the origin and $e_1$.
The appearance of this term is not surprising -- if typically the system has large currents, it is
natural to expect the diffusion coefficient to be large.

However, the typical instantaneous current is not sufficient to understand the behavior
of the diffusion coefficient -- correlations in space and time could also have an
important effect. For example, take the Kob-Andersen model with $k=d=2$, and consider
a configuration in which the sites $e_1$,$e_2$ and $e_1+e_2$ are empty, and all other sites
(in a large neighborhood of the origin) are occupied. The particle at the origin could jump
one step to the right, but any attempt to jump further is not allowed by the constraint.
Therefore, if we wait for some time it is likely to jump back to the left. Thus, we see that
an instantaneous right current can cause at a later time a current to the left.
The role of the function $f$ in equation \eqref{eq:vy} is to compensate for this effect,
by adding to $\delta_{\alpha,1} \left(\eta(e_1) - \eta(0) \right)$ an effective current in
the opposite direction.

The example of the last paragraph demonstrates the following heuristic picture --
typically, most particles are confined to a very small region; they can move back and forth
but never too far. Assume for simplicity that the origin is occupied, and consider the particle
there. In view of the heuristic described above, this particle will remain for a very long
time in a certain region that we may refer to as the \emph{attainable region}. Recalling
Observation \ref{obs:BPandKA}, it is reasonable to approximate this attainable region by
the set of sites connected to the origin for the bootstrap percolation in some (large) box.
Hence, being \emph{relevant} roughly represents a small attainable region.

As long is this attainable region remains small, we expect that any instantaneous current
to the right caused by the particle at the origin will be canceled shortly after by a jump
to the left.
For a good choice of $f$, this fact (assuming $c_{0,e_1}(\eta)=1$) should be expressed as 
\[
\eta(e_1) - \eta(0) \approx \sum_{x\in\zz^d}\grad_{0,e_1} \tau_x f(\eta).
\]

In the following we will see that the function $f$ defined in equation \eqref{eq:testfunction}
satisfies this approximated relation. The error term corresponds to the possibility that
the attainable region is, in fact, large. When the notion of "small" or "large" attainable region
is determined according to the scale $l$ given in equation \eqref{eq:lupper}, we obtain the
upper bound of $D$.
%

\end{rem}

First, observe that since $f$ depends on $(4l+1)^{d}$ sites and
its maximum is smaller than $l+1$,
\[
\mu\left[\sum_{\alpha=1}^{d}\varepsilon\left(\delta_{\alpha,1}\left(\eta(e_{1})-\eta(0)\right)-\sum_{x\in\zz^{d}}\grad_{0,e_{\alpha}}\tau_{x}f\right)^{2}\right]
\le d\varepsilon(1+(4l+1)^{d}(l+1))^{2}=O(\varepsilon).
\]
Therefore, since $c_{0,e_{\alpha}}^{(\varepsilon)}=(1-\varepsilon)c_{0,e_{\alpha}}+\varepsilon$,
it suffices to prove 
\begin{equation}
\mu\left[\sum_{\alpha=1}^{d}c_{0,e_{\alpha}}\left(\delta_{\alpha,1}\left(\eta(e_{1})-\eta(0)\right)-\sum_{x\in\zz^{d}}\grad_{0,e_{\alpha}}\tau_{x}f\right)^{2}\right]\le e^{-\lambda l}.\label{eq:testineq}
\end{equation}

Since the analysis of $f$ will require us to understand when particles
enter or exit different boxes (and in particular $\Lambda_{\pm}$),
we will need to introduce some notation. First, for a set $\Lambda\subset\zz^{d}$,
we say that an (undirected) edge $(x,y)$ is on the \emph{boundary}
of $\Lambda$, and write $(x,y)\in\overline{\partial}\Lambda$, if
one vertex is in $\Lambda$ and the other outside $\Lambda$. The
\emph{(inner) boundary} $\partial\Lambda$ are the sites in $\Lambda$
that have a neighbor outside $\Lambda$.

For $\alpha=1,\dots,d$ we define the \emph{boundary in the $e_{\alpha}$
direction} 
\[
\partial^{\alpha}\Lambda=\left\{ x:(x,x-e_{\alpha})\in\overline{\partial}\Lambda\right\} .
\]
We will write $\Lambda_{l}=\left[-l,l\right]^{d}$ (and $\Lambda_{2l}=\left[-2l,2l\right]^{d}$),
as well as 
\[
\Lambda_{l}^{\alpha}=\left[-l,l\right]^{\alpha-1}\times\left\{ 0\right\} \times\left[-l,l\right]^{d-\alpha}.
\]
Finally, for $x_{0}\in\Lambda_{l}^{\alpha}$, we denote the two boundary
sites above and below $x_{0}$ as 
\begin{align*}
x_{0}^{+\alpha} & =x_{0}+\left(l+1\right)e_{\alpha},\\
x_{0}^{-\alpha} & =x_{0}-le_{\alpha}.
\end{align*}
Note that $x_{0}^{\pm\alpha}\in\partial^{\alpha}\Lambda_{l}$. See Figure \ref{fig:upper_bound_geometry}.

\begin{figure}
\begin{tikzpicture}[scale=0.5, every node/.style={scale=1.0}]
	\def \l{5}
	
	\draw[OliveGreen, very thick] (0,-\l) -- (0,\l);	
	
	\foreach \x in {-\l,...,0}{
		\foreach \y in {-\l,...,\l}{
			\fill[BrickRed] (\x,\y) circle (0.1);
		}
	}
	
	\foreach \x in {1,...,\l}{
		\foreach \y in {-\l,...,\l}{
			\fill[NavyBlue] (\x,\y) circle (0.1);
		}
	}
	
	\foreach \i in {-\l,...,\l}{
		\draw[gray] (\i,\l) -- +(0,1);
		\draw[gray] (\i,-\l) -- +(0,-1);
		\draw[gray] (\l,\i) -- +(1,0);
		\draw[gray] (-\l,\i) -- +(-1,0);
	}
	
	\foreach \i in {-\l,...,\l}{
		\draw[gray] (\i,\l) circle (0.25);
		\draw[gray] (\i,-\l) circle (0.25);
		\draw[gray] (\l,\i) circle (0.25);
		\draw[gray] (-\l,\i) circle (0.25);
	}
	
	\foreach \i in {-\l,...,\l}{
		\draw[YellowOrange] (\l+1,\i) circle (0.4);
		\draw[YellowOrange] (-\l,\i) circle (0.4);
	}

\end{tikzpicture}
\caption{\label{fig:upper_bound_geometry} A sketch of the different sets in $\Lambda_l$ in two dimensions. The left part $\Lambda_-$ are the red vertices; the right part $\Lambda_+$ consists of the blue vertices; the boundary $\overline{\partial}\Lambda$ is given by the gray edges; the inner boundary $\partial \Lambda$ contains the sites circled in gray; the boundary in the $e_1$ direction $\partial^1 \Lambda$ is circled in orange; and the set $\Lambda_l^1$ is represented by the green line crossing the box. In this picture, for any $x_0 \in \Lambda_l^1$ (i.e., on the green line), the site $x_0^{+1}$ is the orange circle to its right (at distance $l+1$); and the site $x_0^{-1}$ is the orange circle to its left (at distance $l$).}
\end{figure}

We are now ready to start the analysis of $f$. In the next proposition we will see,
for fixed $x$, what is the contribution of $\grad_{0,e_1} \tau_x f$:
\begin{prop}
\label{prop:non0grad}Fix an edge $\left(x,x-e\right)$ and configuration
$\eta$ such that $c_{0,e}=1$ and $\grad_{0,e}\tau_{x}f\neq0$. Then
one of the following holds:
\begin{enumerate}
\item $0\in x+\left(\Lambda_{2l+1}\setminus\Lambda_{2l-2}\right)$ (equivalently
$x\in\Lambda_{2l+1}\setminus\Lambda_{2l-2}$), and there exists $y\in x+\partial\Lambda_{l}$
such that the bootstrap percolation in $x+\Lambda_{2l}$ connects
$y$ to $x+\partial\Lambda_{2l-2}$, either for $\eta$ or $\eta^{0,e}$.
In this case $\left|\grad_{0,e}\tau_{x}f\right|\le C\,l$. See Figure \ref{fig:non0grad}.
\item $\left(0,e\right)\in x+\overline{\partial}\Lambda_{l}$ (equivalently
$\left(x,x-e\right)\in\overline{\partial}\Lambda_{l}$) and $-x$
is relevant for $\tau_{x}\eta$. In this case 
\begin{align}
\grad_{0,e}\tau_{x}f & =\frac{\eta(e)-\eta(0)}{2(2l+1)^{d-1}}\times\begin{cases}
1 & 0\in x+\Lambda_{+}\cap\partial\left[-l,l\right]^{d}\\
-1 & e\in x+\Lambda_{+}\cap\partial\left[-l,l\right]^{d}\\
-1 & 0\in x+\Lambda_{-}\cap\partial\left[-l,l\right]^{d}\\
1 & e\in x+\Lambda_{-}\cap\partial\left[-l,l\right]^{d}
\end{cases}\nonumber \\
 & =\frac{\eta(e)-\eta(0)}{2(2l+1)^{d-1}}\times\begin{cases}
1 & x\in\Lambda_{-}\cap\partial\left[-l,l\right]^{d}\\
-1 & x-e\in\Lambda_{-}\cap\partial\left[-l,l\right]^{d}\\
-1 & x\in\Lambda_{+}\cap\partial\left[-l,l\right]^{d}\\
1 & x-e\in\Lambda_{+}\cap\partial\left[-l,l\right]^{d}
\end{cases}.\label{eq:testfunction_grad}
\end{align}
\end{enumerate}
\end{prop}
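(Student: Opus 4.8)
The statement is a case analysis of when the gradient $\grad_{0,e}\tau_x f$ can be nonzero, so the natural approach is to unfold the definition of $f$ in \eqref{eq:testfunction} and track exactly which terms change when we swap the occupations at $x$ and $x-e$. Recall $\tau_x f(\eta) = f(\tau_x\eta)$ depends on $\tau_x\eta$ restricted to $\Lambda_l = [-l,l]^d$, via the two sums over $\Lambda_\pm \cap \mathcal{R}(\tau_x\eta)$. So $\grad_{0,e}\tau_x f = f(\tau_x(\eta^{0,e})) - f(\tau_x\eta)$, and $\tau_x(\eta^{0,e}) = (\tau_x\eta)^{-x,-x+(x-e)\cdot\text{(shift)}}$ — more precisely, swapping sites $0$ and $e$ in $\eta$ corresponds, after applying $\tau_x$, to swapping sites $-x$ and $e-x$ in $\tau_x\eta$. (It will be cleanest to set $\zeta = \tau_x\eta$ and $a = -x$, $b = e - x = a + e$, so we are comparing $f(\zeta)$ with $f(\zeta^{a,b})$, where $a \sim b$.)

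\textbf{Two sources of change.} There are precisely two reasons $f(\zeta^{a,b})$ can differ from $f(\zeta)$. First, the \emph{occupation} term: if one of $a,b$ lies in $\Lambda_+ \cap \mathcal{R}$ or $\Lambda_- \cap \mathcal{R}$ while the other does not, the swap moves a particle across a boundary and changes the relevant sum. This can happen only if the edge $(a,b)$ straddles $\partial\Lambda_l$ (or, more carefully, the boundary between $\Lambda_+$, $\Lambda_-$, and the complement of $\Lambda_l$); note the dividing hyperplane between $\Lambda_-$ and $\Lambda_+$ is internal to $\Lambda_l$ but any edge crossing it has both endpoints in $\Lambda_l$, and one would need to check whether a site near that internal interface is relevant — this is the content of case (2) and gives the explicit formula \eqref{eq:testfunction_grad}. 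Second, the \emph{relevance} term: swapping $a$ and $b$ can change $A_\zeta$ and hence the span $[A_\zeta]^{\Lambda_{2l}}$, altering which sites of $\Lambda_l$ are relevant. Here is where Observation~\ref{obs:BPandKA} enters: since we assume $c_{0,e}(\eta) = 1$, the swapped edge satisfies the KA constraint, so in any box $V$ in which the constraint is still satisfied (i.e.\ $V$ contains enough of the neighborhood of the edge), we have $[A_\zeta]^V = [A_{\zeta^{a,b}}]^V$, so the set of relevant sites does not change at all. Thus a change in the relevance term forces the constraint to \emph{fail} in the relevant box $[-2l,2l]^d$ (translated), which by Claim~\ref{claim:bp_distantinfluence} can only happen if the swapped edge is close to $\partial\Lambda_{2l}$ — quantitatively, within the $\Lambda_{2l+1}\setminus\Lambda_{2l-1}$ shell — and in that case there must be a bootstrap-percolation connection from a site of $x+\partial\Lambda_l$ out to $x+\partial\Lambda_{2l-2}$, which is exactly case (1). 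The bound $|\grad_{0,e}\tau_x f| \le Cl$ in case (1) is then immediate from $\|f\|_\infty \le 2^d l$ (or, more sharply, since the swap can only toggle relevance of $O(l^{d-1})$ sites along one hyperplane... but the crude bound $Cl$ from $\|f\|_\infty$ suffices and is all that is claimed).

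\textbf{Order of steps.} (i) Rewrite $\grad_{0,e}\tau_x f$ in terms of $\zeta = \tau_x\eta$ and the swap on the edge $(a,b) = (-x, e-x)$. (ii) Decompose $f(\zeta^{a,b}) - f(\zeta)$ into the occupation-change contribution (sites that are relevant in both configurations but one is in $\Lambda_\pm$, the other not) and the relevance-change contribution (sites whose relevance status differs). (iii) For the occupation-change part: show it is nonzero only if $(a,b)$ crosses a boundary of $\Lambda_+$ or $\Lambda_-$; since $a\sim b$, and relevance must then agree on the relevant endpoint (which is automatic when the constraint failure of step (iv) doesn't occur), one extracts exactly the four cases and signs of \eqref{eq:testfunction_grad}, using $\eta(e)-\eta(0) = \zeta(b)-\zeta(a)$ for the magnitude. (iv) For the relevance-change part: invoke Observation~\ref{obs:BPandKA} to argue that if the translated constraint box $x+\Lambda_{2l}$ still "sees" the edge constraint (true unless the edge is in the outer shell $x+(\Lambda_{2l+1}\setminus\Lambda_{2l-1})$), the relevant set is unchanged; in the remaining shell case, apply Observation~\ref{obs:bp_monotonicity} together with Claim~\ref{claim:bp_distantinfluence} (with $B$ a box not reaching the edge, $B'= x+\Lambda_{2l}$, $S = x+(\{0,1\}\times[-2l,2l]^{d-1})$) to produce the required connection to $x+\partial\Lambda_{2l-2}$, and bound the contribution by $Cl$. (v) Finally observe the two cases are mutually exhaustive given $\grad_{0,e}\tau_x f \ne 0$: if the edge is not in the outer shell, only the occupation-change mechanism survives and forces case (2) with the stated formula (in particular $-x$ relevant for $\tau_x\eta$, since an irrelevant endpoint contributes $0$ to both sums); if it is in the outer shell, case (1) applies.

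\textbf{Main obstacle.} The delicate point is step (iv): making precise "the constraint box still sees the edge" and cleanly deducing from Observations~\ref{obs:bp_monotonicity}, \ref{obs:BPandKA}, \ref{obs:bpcluster} and Claim~\ref{claim:bp_distantinfluence} that a change in relevance forces a long bootstrap connection reaching the shell $x+\partial\Lambda_{2l-2}$. One has to be careful that the relevant box is $[-2l,2l]^d$ (size $2l$) while the test function lives on $[-l,l]^d$ (size $l$), and that the relevance of a site $y \in x+\partial\Lambda_l$ is determined by connections within $x+\Lambda_{2l}$ to the central slab; an edge swap at distance $\ge 2$ from $\partial\Lambda_{2l}$ leaves $[A]^{x+\Lambda_{2l}}$ invariant by Observation~\ref{obs:BPandKA}, so only swaps in the outer shell matter, and then Claim~\ref{claim:bp_distantinfluence} propagates the discrepancy from near the edge inward to $x+\partial\Lambda_{2l-2}$. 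Getting the geometry of these nested boxes and shells exactly right — and checking the constraint $c_{0,e}=1$ is genuinely needed and used via Observation~\ref{obs:BPandKA} — is the bulk of the work; the case (2) formula is then a bookkeeping exercise on signs.
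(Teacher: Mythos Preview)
Your plan is correct and follows essentially the same route as the paper: decompose the possible change in $f$ into ``the relevance set $\mathcal{R}$ changes'' versus ``a relevant site changes occupation'', use Observation~\ref{obs:BPandKA} to force the edge into the outer shell in the first case, then combine monotonicity with Claim~\ref{claim:bp_distantinfluence} (taking $B=x+\Lambda_{2l-2}$, $B'=x+\Lambda_{2l}$) to produce the long bootstrap connection, and finish case~(1) with the crude sup-norm bound on $f$; case~(2) is then the bookkeeping you describe. One small point you flag but do not resolve: for an edge crossing the \emph{internal} interface between $\Lambda_-$ and $\Lambda_+$, you should check that $\grad_{0,e}\tau_x f=0$. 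This follows because both endpoints $a,b$ lie in the slab $\{0,1\}\times[-2l,2l]^{d-1}$, and the hypothesis $c_{0,e}=1$ together with $\eta(0)\neq\eta(e)$ forces both $a$ and $b$ into the span (each has $\ge k$ empty neighbours inside $\Lambda_{2l}$), hence both are irrelevant and contribute nothing to either sum. The same observation also justifies, in case~(2), why ``$-x$ relevant'' is the right single condition regardless of which endpoint lies in $\Lambda_l$: under $c_{0,e}=1$ and $\eta(0)\neq\eta(e)$, both $-x$ and $e-x$ lie in the span and are neighbours there, so one is relevant iff the other is.
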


\begin{figure}
\begin{tikzpicture}[scale=0.12, every node/.style={scale=0.6}]
	\def \l{10}

	\draw[step=1, gray, ultra thin, opacity=0.5] (-2*\l-1.5,-2*\l-1.5) grid (2*\l+1.5,2*\l+1.5);	
	
	\filldraw[black]  (0,0) circle (4pt) node[anchor=west] {$x$};
	
	\draw[BrickRed, thick, fill=gray, fill opacity=0.3] (-2*\l,-2*\l)--(-2*\l,2*\l)--(2*\l,2*\l)--(2*\l,-2*\l)--(-2*\l,-2*\l);
	\draw[BrickRed, thick] (-2*\l-1,-2*\l-1)--(-2*\l-1,2*\l+1)--(2*\l+1,2*\l+1)--(2*\l+1,-2*\l-1)--(-2*\l-1,-2*\l-1);
	\draw[BrickRed, thick] (-2*\l+1,-2*\l+1)--(-2*\l+1,2*\l-1)--(2*\l-1,2*\l-1)--(2*\l-1,-2*\l+1)--(-2*\l+1,-2*\l+1);
	
	\draw[NavyBlue, thick] (-\l,-\l)--(-\l,\l)--(\l,\l)--(\l,-\l)--(-\l,-\l);
	
	\draw[OliveGreen, thick] (-2*\l+2,-2*\l+2)--(-2*\l+2,2*\l-2)--(2*\l-2,2*\l-2)--(2*\l-2,-2*\l+2)--(-2*\l+2,-2*\l+2);
	
	\draw[thick] (\l,0.2*\l) .. controls (1.3*\l,-0.3*\l) and (1.7*\l,0.7*\l) .. (2*\l-2,0.5*\l);
\end{tikzpicture}
\caption{\label{fig:non0grad} Illustration of the first case in Proposition \ref{prop:non0grad}. $x+\Lambda_{2l}$ is filled in gray. The origin is on one of the three red lines, which represent $x + \Lambda_{2l+1}\setminus \Lambda_{2l-2}$. The blue square is $x + \partial \Lambda_l$ and the green square is $x + \partial \Lambda_{2l-2}$. These two squares must be connected for the bootstrap percolation.}
\end{figure}

\begin{proof}
$f$ could only change when the set of relevant sites changes, or
when a relevant site changes its occupation.

The first case corresponds to point 1 -- for the set of relevant
sites for $\tau_{x}\eta$ to change, $[A_{\eta}]^{x+\Lambda_{2l}}$
must change, and by Observation \ref{obs:BPandKA} this is only possible
if $c_{0,e}$ is only satisfied with the help of sites outside $x+\Lambda_{2l}$.
This means that at least one of the vertices $0$ or $e$ is in
$x + \partial \Lambda_{2l}$, and in particular
$0\in x+\left(\Lambda_{2l+1}\setminus\Lambda_{2l-2}\right)$.

To understand the second implication, we may assume without loss of
generality that there is some site $z\in\Lambda_{l}$ which is connected
to $\{0,1\}\times[-2l,2l]^{d-1}$ in $[A_{\eta}]^{x+\Lambda_{2l}}$
but not in $[A_{\eta^{0,e}}]^{x+\Lambda_{2l}}$. By monotonicity of
bootstrap percolation and using again Observation \ref{obs:BPandKA},
$z$ cannot be connected to $\{0,1\}\times[-2l,2l]^{d-1}$ in $[A_{\eta}]^{\Lambda_{2l-2}}$.
Then, by Claim \ref{claim:bp_distantinfluence}, $z$ is connected
to $\partial\Lambda_{2l-2}$ in $[A_{\eta}]^{x+\Lambda_{2l}}$. To
finish the first point, we only need the rough bound $|f(\eta)|\le\frac{|\Lambda_{+}|+|\Lambda_{-}|}{2(2l+1)^{d-1}}$.

In the second case, we note first if a particle jumps inside $\Lambda_l$,
and it is originally in $\Lambda_+ \cap \mathcal{R}$, then it will remain
in $\Lambda_+ \cap \mathcal{R}$ (and analogously for $\Lambda_-$). Therefore,
$f$ could only change if a particle jumps into or out of $\Lambda_l$, so
for $\tau_x f$ to change we must require $\left(0,e\right)\in x+\overline{\partial}\Lambda_l$.
Moreover, its (shifted) position $-x$ must be relevant for the shifted
configuration $\tau_x \eta$.
Then, $\grad_{(0,e)}\tau_{x}f$ is given by following
carefully the four options: moving into $\Lambda_{+}$, out of
$\Lambda_{+}$, into $\Lambda_{-}$, or out of $\Lambda_{-}$.
\end{proof}

In order to control the contribution coming from the first case
of Proposition \ref{prop:non0grad}, we give a name to the event
that appears there:
\begin{defn}
Fix an edge $(x,x-e)$. Then $E_{x,x-e}$ is the event, that there
exist $y\in x+\partial\Lambda_{l}$ and $z\in x+\partial\Lambda_{2l-2}$
such that the bootstrap percolation in $x+\Lambda_{2l}$ connects $y$
to $z$, either for $\eta$ or $\eta^{0,e}$.
\end{defn}

An important tool we will use in order to bound the probability of
this event is the following lemma:
\begin{lem}[{\cite[Lemma 5.1]{CerfManzo}}]
\label{lem:CM}Let $l'<10l$, and fix $y,z\in\Lambda_{l'}$. Then,
assuming that the constant $\lambda$ in equation (\ref{eq:lupper})
is small enough, 
\begin{align*}
\mu(z\text{ connected to }y\text{ in }[A_{\eta}]^{\Lambda_{l'}}) & \le\left(C\norm{z-y}_{\infty}^{d-1}q\right)^{\lambda\norm{z-y}_{\infty}} & \quad k=2,\\
\mu(z\text{ connected to }y\text{ in }[A_{\eta}]^{\Lambda_{l'}}) & \le q^{\lambda\norm{z-y}_{\infty}} & \quad k\ge3.
\end{align*}
\end{lem}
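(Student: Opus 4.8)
The plan is to reduce the connection event to the existence of a single connected, internally spanned set, and then estimate its probability by a union bound over a family of efficiently encodable ``certificates'' for such sets; the deterministic counting of these certificates is the content of \cite[Lemma 5.1]{CerfManzo}. Write $n=\norm{x-y}_\infty$. If $x$ is connected to $y$ in $[A_\eta]^{\Lambda_{l'}}$, let $U$ be the connected component of $x$ (equivalently of $y$) in $[A_\eta]^{\Lambda_{l'}}$. By Observation \ref{obs:bpcluster}, $[A_\eta]^U=U$, and since the bootstrap percolation in $U$ is started from and stays inside $A_\eta\cap U$, this says that $U$ is \emph{internally spanned}: $[A_\eta\cap U]^U=U$. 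Since $U$ is connected and contains both $x$ and $y$, $\operatorname{diam}_\infty U\ge n$; and $U\subseteq\Lambda_{l'}$, so $n\le\operatorname{diam}_\infty\Lambda_{l'}=2l'<20\,l$. Hence it suffices to bound
\[
\mu\bigl(\exists\,U\subseteq\Lambda_{l'}\ \text{connected},\ x\in U,\ \operatorname{diam}_\infty U\ge n,\ [A_\eta\cap U]^U=U\bigr).
\]

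The deterministic input is that every connected internally spanned $U$ with $\operatorname{diam}_\infty U\ge n$ carries a \emph{certificate}: the list $a_1,\dots,a_m\in A_\eta\cap U$ of empty sites from which the bootstrap process is run, together with a rooted binary tree recording the order in which the droplets they generate merge. One needs two features. First, $m\ge\lambda n$: a connected internally spanned set of $\ell^\infty$-diameter $\ge n$ contains at least $\lambda n$ empty sites, a standard linear lower bound valid for $k\le d$ (obtainable from the Aizenman--Lebowitz lemma, using monotonicity along the dynamics of an appropriate size functional of the infected set). Second, over all admissible $U\ni x$ the number of certificates with a prescribed seed size $m$ is at most $(Cn^{d-1})^{m}$ when $k=2$ and at most $C^{m}$ when $k\ge3$: the tree contributes a factor $\le 4^{m}$; since two internally spanned sub-droplets can be merged by the $k$-neighbour process only when their bounding boxes lie within bounded $\ell^\infty$-distance, the bounding box of $U$ has extent $\le Cm$ in each coordinate, so each seed lies in $x+[-Cm,Cm]^d$, and a finer analysis (the heart of \cite{CerfManzo}) shows that along the long axis of $U$ the seeds occur at essentially prescribed monotone levels, leaving free only the $d-1$ transverse coordinates, each in a window of size $\le Cn$ --- giving $(Cn^{d-1})^{m}$ for $k=2$. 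For $k\ge3$ the much greater rigidity of the dynamics (the mobile empty structures are thin, high-codimension objects advancing only with the help of empty sites immediately adjacent to them, along the iterated scales of the remark after Lemma \ref{lem:multistepmove}) confines the seeds instead to a tube whose width depends on $k,d$ but not on $q$, reducing the transverse count to $O_{k,d}(1)$ per seed.

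Granting this, the union bound finishes the argument. For a fixed certificate the probability that its $m$ listed sites are all empty is $q^{m}$, hence
\[
\mu\bigl(x\ \text{connected to}\ y\ \text{in}\ [A_\eta]^{\Lambda_{l'}}\bigr)\ \le\ \sum_{m\ge\lambda n}\bigl(Cn^{d-1}q\bigr)^{m}\quad(k=2),\qquad\le\ \sum_{m\ge\lambda n}\bigl(Cq\bigr)^{m}\quad(k\ge3).
\]
When $k=2$, the hypothesis that $\lambda$ in (\ref{eq:lupper}) is small enough gives $Cn^{d-1}q\le C(20\lambda)^{d-1}<\tfrac12$ (using $n<20\,l=20\lambda q^{-1/(d-1)}$), so summing the geometric series yields $(Cn^{d-1}q)^{\lambda n}$, which is the stated bound after renaming the constants. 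When $k\ge3$, for $q$ small $Cq\le q^{1/2}$, so the sum is $\le q^{\lambda n/2}$, again the stated bound after renaming $\lambda$.

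The main obstacle is the second feature of the certificate: the purely deterministic counting, and in particular squeezing the transverse positions down to $n^{d-1}$ for $k=2$ (an $n^{d}$ bound would already be too weak for the application to (\ref{eq:testineq})) and down to a $q$-independent width for $k\ge3$. This is exactly the combinatorics of which empty configurations move for the KA constraint and is what is imported from \cite{CerfManzo}; the probabilistic part is the one-line union bound above, and the smallness of $\lambda$ in (\ref{eq:lupper}) is used only to make $Cn^{d-1}q<1$ over the whole range $n<20\,l$.
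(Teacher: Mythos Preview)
The paper offers no proof of this lemma; it is stated with a direct citation to \cite[Lemma~5.1]{CerfManzo} and nothing further. Your write-up therefore does not compete with any argument in the paper, and insofar as your own bottom line is ``this is what is imported from \cite{CerfManzo}'', you and the paper agree.

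Your probabilistic shell is sound: passing via Observation~\ref{obs:bpcluster} to a connected internally spanned component $U\ni x,y$, extracting a seed set of size $m\ge\lambda n$ (the perimeter/Aizenman--Lebowitz linear lower bound), and closing with a union bound and a geometric sum is exactly the right architecture. The arithmetic using the smallness of $\lambda$ in~(\ref{eq:lupper}) to force $Cn^{d-1}q<\tfrac12$ uniformly over $n<20l$ is also correct, and is precisely why the hypothesis ``$\lambda$ small enough'' appears in the statement.

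What is not established is the deterministic certificate count, which you rightly flag as the heart of the matter. For $k=2$ your ``monotone levels along the long axis, $O(n^{d-1})$ transverse choices per seed'' is a fair caricature of the hierarchy argument. For $k\ge3$, however, the assertion that the seeds sit in a tube of $q$-independent width is not the mechanism in \cite{CerfManzo}, and as written it is not justified (for instance, nothing you say rules out wide internally spanned slabs). Since you already defer the counting to the citation, I would simply drop that heuristic rather than risk an incorrect explanation; the sketch then stands as an honest reduction-plus-citation, which is all the paper itself provides.
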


\begin{claim}
\label{claim:becomingrelevant_prob}Fix an edge $(x,x-e)$. Then 
\[
\mu(E_{x,x-e})\le Ce^{-\lambda l}.
\]
\end{claim}

\begin{proof}
First, note that there are $Cl^{d-1}$ possible choices of $y$
and $Cl^{d-1}$ choices of $z$. Note that $\norm{z-y}_\infty \ge l-2$.
By Lemma \ref{lem:CM}, the probability for $y$
to be connected to $z$ is bounded by $(Cl^{d-1}q)^{\lambda l}$ for
$k=2$ and $q^{\lambda l}$ for $k\ge3$; both of which are, indeed,
smaller than $Ce^{-\lambda l}$.
\end{proof}
The last claim covers the first case of Proposition \ref{prop:non0grad},
and we now move to the second. 

When considering this case,
we will use certain cancellations in directions perpendicular to $e_1$.
More precisely, a particle jumping from $0$ to $e_\alpha$ (for $\alpha\neq 1$)
will \emph{enter} $x + \Lambda_l$ (for an appropriate choice of $x$) but \emph{exit}
$x' + \Lambda_l$ (for an appropriate choice of $x'$). We then expect
that $\grad_{0,e_\alpha} \tau_x f$ and $\grad_{0,e_\alpha} \tau_{x'} f$
will cancel out, which is indeed the case unless one of the sites is relevant
and the other irrelevant. We therefore introduce the following event:
\begin{defn}
Fix $\alpha\in\{2,\dots,d\}$ and $x_{0}\in\Lambda_{l}^{\alpha}$.
Let $E_{\alpha}(x_{0})$ be the event,
that either $-x_{0}^{+\alpha}$ is relevant for $\tau_{x_{0}^{+\alpha}}\eta$,
or $-x_{0}^{-\alpha}$ is relevant for $\tau_{x_{0}^{-\alpha}}\eta$,
but not both.
\end{defn}

\begin{claim}
\label{claim:updown_nocanlelation}Fix $\alpha\in\{2,\dots,d\}$ and
$x_{0}\in\Lambda_{l}^{\alpha}$.
Then for all $\eta\in E_{\alpha}(x_{0})$, the origin is connected
to $\partial\Lambda_{l}$ in $[A_{\eta}]^{\Lambda_{3l}}$. Moreover,
\[
\mu(E_{\alpha}(x_{0}))\le Ce^{-\lambda l}.
\]
\end{claim}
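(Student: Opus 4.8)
The plan is to separate a deterministic geometric reduction from a probabilistic estimate of Cerf--Manzo type, in the spirit of the preceding claims. First I would translate the two conditions defining $E_\alpha(x_0)$. Write $B_{\pm}=x_0^{\pm\alpha}+\Lambda_{2l}$ and $S_{\pm}=x_0^{\pm\alpha}+\bigl(\{0,1\}\times[-2l,2l]^{d-1}\bigr)$. One checks that $-x_0^{\pm\alpha}\in[-2l,2l]^{d}$, so that ``$-x_0^{+\alpha}$ relevant for $\tau_{x_0^{+\alpha}}\eta$'' and ``$-x_0^{-\alpha}$ irrelevant for $\tau_{x_0^{-\alpha}}\eta$'' become, after translating by $x_0^{\pm\alpha}$ and using that the span commutes with translations, the statements: (i) the origin is \emph{not} connected to $S_+$ in $[A_\eta]^{B_+}$, and (ii) the origin \emph{is} connected to $S_-$ in $[A_\eta]^{B_-}$. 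The geometric point --- and the reason the claim needs $\alpha\ge2$ --- is that $x_0^{\pm\alpha}$ translate only along $e_\alpha$, transverse to the slab direction $e_1$; hence $S_+$ and $S_-$ have the same first coordinate and, since $(x_0)_\alpha=0$, differ only in the $e_\alpha$-range: $S_-$ reaches $e_\alpha$-coordinate $-3l$, while $S_+$ only reaches $1-l$. In particular every site of $S_-$ with $e_\alpha$-coordinate $\ge1-l$ already lies in $S_+$.

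For the deterministic assertion I would argue by contradiction. By (ii) we have $0\in[A_\eta]^{B_-}$, and $B_-\subseteq\Lambda_{3l}$, so $0\in[A_\eta]^{\Lambda_{3l}}$ by monotonicity (Observation~\ref{obs:bp_monotonicity}); let $U$ be the cluster of the origin in $[A_\eta]^{\Lambda_{3l}}$, so $U\ne\emptyset$. Suppose $U\cap\partial\Lambda_l=\emptyset$. Then $U$ is connected, contains $0\in\Lambda_l\setminus\partial\Lambda_l$, and misses $\partial\Lambda_l$, hence cannot leave $\Lambda_l$ and is confined to $\Lambda_l\setminus\partial\Lambda_l$, whose coordinates lie in $[-(l-1),l-1]$; since also $\Lambda_l\subseteq B_-$ and $\Lambda_l\setminus\partial\Lambda_l\subseteq B_+$ (the latter inclusion using that the $e_\alpha$-coordinates stay $\ge1-l$), we get $U\subseteq B_-$ and $U\subseteq B_+$. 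Observation~\ref{obs:bpcluster} gives $[A_\eta]^U=U$, and monotonicity then yields $U=[A_\eta]^U\subseteq[A_\eta]^{B_{\pm}}$; since $U$ is connected and contains $0$, it must coincide with the cluster of the origin in $[A_\eta]^{B_-}$ and in $[A_\eta]^{B_+}$. By (ii) this common cluster meets $S_-$, say at $w$; as $w\in\Lambda_l\setminus\partial\Lambda_l$ its $e_\alpha$-coordinate is $\ge1-l$, so $w\in S_+$, whence the origin is connected to $S_+$ in $[A_\eta]^{B_+}$ --- contradicting (i). Therefore $U$ meets $\partial\Lambda_l$, which is the first assertion of the claim.

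The probability bound is then immediate and parallels Claim~\ref{claim:becomingrelevant_prob}: by the deterministic part, $E_\alpha(x_0)\subseteq\bigcup_{y\in\partial\Lambda_l}\{0\text{ connected to }y\text{ in }[A_\eta]^{\Lambda_{3l}}\}$; there are at most $Cl^{d-1}$ sites $y\in\partial\Lambda_l$, each with $\norm{y}_\infty=l$; and since $3l<10l$, Lemma~\ref{lem:CM} bounds each term by $(Cl^{d-1}q)^{\lambda l}$ for $k=2$ and by $q^{\lambda l}$ for $k\ge3$. Summing, and using for $k=2$ that $l^{d-1}q$ is a small constant once the constant $\lambda$ in equation~(\ref{eq:lupper}) is small enough, gives $\mu(E_\alpha(x_0))\le Ce^{-\lambda l}$.

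The step needing care is the bookkeeping in the deterministic reduction: checking the inclusions $\Lambda_l\subseteq B_-$, $\Lambda_l\setminus\partial\Lambda_l\subseteq B_+$ and $B_-\subseteq\Lambda_{3l}$ (so the conclusion lives inside $\Lambda_{3l}$, as the claim requires), and, above all, invoking Observation~\ref{obs:bpcluster} together with monotonicity to be certain that the confined cluster of the origin is \emph{the same set} whether it is computed in $\Lambda_{3l}$, in $B_-$, or in $B_+$. Transversality ($\alpha\ne1$) enters only through the elementary fact that $S_-$ and $S_+$ agree on $\Lambda_l\setminus\partial\Lambda_l$; everything else is coordinate-chasing and a union bound.
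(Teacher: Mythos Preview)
Your proof is correct and follows the same strategy as the paper: translate the two relevance conditions into connectivity statements about the origin in the boxes $B_\pm=x_0^{\pm\alpha}+\Lambda_{2l}$, deduce that the cluster of the origin in $[A_\eta]^{\Lambda_{3l}}$ must reach $\partial\Lambda_l$, and then finish with a union bound and Lemma~\ref{lem:CM}. The only difference is packaging: the paper obtains the deterministic step by a direct appeal to Claim~\ref{claim:bp_distantinfluence} (with $z=0$, $B=B_-$, $B'=\Lambda_{3l}$ and the common slab $S=x_0+\{0,1\}\times\zz^{d-1}$), whereas you essentially reprove that claim in this specific geometry via Observation~\ref{obs:bpcluster} and the explicit inclusions $\Lambda_l\setminus\partial\Lambda_l\subseteq B_+$, $\Lambda_l\subseteq B_-\subseteq\Lambda_{3l}$.

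One small imprecision worth noting: from $U=[A_\eta]^U\subseteq[A_\eta]^{B_+}$ together with ``$U$ connected, $0\in U$'' you only get that $U$ is \emph{contained in} the cluster of the origin in $[A_\eta]^{B_+}$; equality would require $B_+\subseteq\Lambda_{3l}$, which fails by one in the $e_\alpha$-coordinate. This is harmless, since your argument only uses the inclusion (to place $w\in U\cap S_-\subseteq S_+$ inside the $B_+$-cluster of $0$), and for $B_-$ the reverse inclusion does follow from $B_-\subseteq\Lambda_{3l}$.
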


\begin{proof}
We will prove that the origin is connected
to $\partial\Lambda_{l}$ in $[A_{\eta}]^{\Lambda_{3l}}$
for the case where
$-x_{0}^{+\alpha}$ is relevant for $\tau_{x_{0}^{+\alpha}}\eta$, but
$-x_{0}^{-\alpha}$ is irrelevant for $\tau_{x_{0}^{-\alpha}}\eta$.
The complementing case as analogous.

Let $S=x_{0}+\left\{ 0,1\right\} \times\zz^{d-1}$,$B_{-}=x_{0}^{-\alpha}+\Lambda_{2l}$,$B_{+}=x_{0}^{+\alpha}+\Lambda_{2l}$.
Saying that $-x_{0}^{+\alpha}$ is relevant for $\tau_{x_{0}^{+\alpha}}\eta$
is the same as saying that $0$ is connected to $B_{+}\cap S$ in
$[A_{\eta}]^{B_{+}}$; and saying that $-x_{0}^{-\alpha}$ is irrelevant
for $\tau_{x_{0}^{-\alpha}}\eta$ is the same as saying that that
$0$ is not connected to $B_{-}\cap S$ in $[A_{\eta}]^{B_{-}}$.

In particular, setting $z=0$, $B=B_{-}$ and $B'=\Lambda_{3l}$,
$A_{\eta}$ satisfies the conditions of Claim \ref{claim:bp_distantinfluence}.
Therefore $0$ is connected to $\partial B_{-}$ in $[A_{\eta}]^{B'}$,
which implies the result since $0\in\Lambda_{l}\subset B_{-}$.

The probability estimate follows from Lemma \ref{lem:CM}.
\end{proof}
\begin{claim}
\label{claim:updowncancelation}Fix $\alpha\in \{2,\dots,d \}$, and a configuration
$\eta$ such that $\eta\notin\bigcup_{x_{0}\in\Lambda_{l}^{\alpha}}E_{\alpha}(x_{0})$
and $c_{0,e_{\alpha}}(\eta)=1$. Then 
\[
\sum_{x\in\partial^{\alpha}\Lambda_{l}}\grad_{0,e_{\alpha}}\tau_{x}f=0.
\]
\end{claim}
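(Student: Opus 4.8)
The plan is to turn this sum into one that cancels column by column. First note that every $x\in\partial^{\alpha}\Lambda_{l}$ satisfies $\norm x_{\infty}\le l+1$, so $x\in\Lambda_{2l-1}$ and in particular $x\notin\Lambda_{2l+1}\setminus\Lambda_{2l-1}$; hence case~1 of Proposition~\ref{prop:non0grad} is vacuous for such $x$, and for $x\in\partial^{\alpha}\Lambda_{l}$ the term $\grad_{0,e_{\alpha}}\tau_{x}f$ is either $0$ or equals the explicit case~2 value \eqref{eq:testfunction_grad}. Since $\partial^{\alpha}\Lambda_{l}$ is the disjoint union of the upper faces $\{x_{0}^{+\alpha}\}_{x_{0}\in\Lambda_{l}^{\alpha}}$ and the lower faces $\{x_{0}^{-\alpha}\}_{x_{0}\in\Lambda_{l}^{\alpha}}$, we may regroup
\[
\sum_{x\in\partial^{\alpha}\Lambda_{l}}\grad_{0,e_{\alpha}}\tau_{x}f=\sum_{x_{0}\in\Lambda_{l}^{\alpha}}\bigl(\grad_{0,e_{\alpha}}\tau_{x_{0}^{+\alpha}}f+\grad_{0,e_{\alpha}}\tau_{x_{0}^{-\alpha}}f\bigr),
\]
and it suffices to prove that each bracket vanishes.

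Fix $x_{0}$. Translating by $x_{0}^{+\alpha}$ (resp.\ $x_{0}^{-\alpha}$) turns the exchange along $(0,e_{\alpha})$ into an exchange between $\Lambda_{l}$ and the site just below (resp.\ above) it; in both cases the site that changes occupation inside $\Lambda_{l}$ has the same coordinates in every direction $\neq\alpha$ — those of $-x_{0}$ — and lies on the face $e_{\alpha}\text{-coord}=-l$ (resp.\ $e_{\alpha}\text{-coord}=l$). Reading off \eqref{eq:testfunction_grad}, the $\Lambda_{+}/\Lambda_{-}$ membership of that site (which determines the sign in \eqref{eq:testfunction_grad}) depends only on its $e_{1}$-coordinate, which is the same for the two summands; and the two summands pick up opposite overall signs. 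So whenever both are nonzero they cancel, and whenever both are zero the bracket vanishes trivially. Finally, by case~2 of Proposition~\ref{prop:non0grad}, $\grad_{0,e_{\alpha}}\tau_{x_{0}^{+\alpha}}f\neq0$ forces $-x_{0}^{+\alpha}$ to be relevant for $\tau_{x_{0}^{+\alpha}}\eta$, and likewise $\grad_{0,e_{\alpha}}\tau_{x_{0}^{-\alpha}}f\neq0$ forces $-x_{0}^{-\alpha}$ to be relevant for $\tau_{x_{0}^{-\alpha}}\eta$ (here $c_{0,e_{\alpha}}=1$ and Observation~\ref{obs:BPandKA} are what let one identify the relevance of the boundary site with that of the swapped interior site). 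Hence the bracket for $x_{0}$ is nonzero only when exactly one of $-x_{0}^{+\alpha},-x_{0}^{-\alpha}$ is relevant.

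It remains to rule out the two mixed configurations — and this is the real content of the claim. The configuration ``$-x_{0}^{+\alpha}$ relevant, $-x_{0}^{-\alpha}$ irrelevant'' is exactly the event $E_{\alpha}(x_{0})$, excluded by the hypothesis $\eta\notin\bigcup_{x_{0}}E_{\alpha}(x_{0})$. The reverse configuration is handled by the bootstrap-percolation argument behind Claim~\ref{claim:updown_nocanlelation} with the roles of the boxes $B_{+}=x_{0}^{+\alpha}+\Lambda_{2l}$ and $B_{-}=x_{0}^{-\alpha}+\Lambda_{2l}$ interchanged: these two boxes are reflections of one another in a hyperplane transverse to $e_{\alpha}$, their intersection is a box containing the origin, and on this intersection the two distinguished $e_{1}$-slabs coincide. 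If the component of $0$ in $[A_{\eta}]^{B_{+}}$ stays inside $B_{+}\cap B_{-}$, then by Observations~\ref{obs:bpcluster} and~\ref{obs:bp_monotonicity} it is also the component of $0$ in $[A_{\eta}]^{B_{-}}$ and meets the slab there too, contradicting relevance of $-x_{0}^{-\alpha}$; otherwise it exits $B_{-}$ through the face $e_{\alpha}\text{-coord}=l$ and hence $0$ is connected to $\partial\Lambda_{l}$ — the same conclusion as in Claim~\ref{claim:updown_nocanlelation} (via Claim~\ref{claim:bp_distantinfluence}), which under the standing hypotheses cannot happen. The part I expect to be delicate is not the bootstrap-percolation input but the bookkeeping of the second paragraph: for each of $x_{0}^{+\alpha},x_{0}^{-\alpha}$ keeping straight which of the two swapped sites lies in $\Lambda_{l}$, in which of $\Lambda_{\pm}$ it lies (recall $\Lambda_{\pm}$ are not symmetric under reflection, so the two cut-points $e_{1}\text{-coord}=0$ and $e_{1}\text{-coord}=1$ must be tracked), and checking that the resulting signs in \eqref{eq:testfunction_grad} are genuinely opposite.
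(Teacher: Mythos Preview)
Your decomposition into pairs $(x_0^{+\alpha},x_0^{-\alpha})$ and the sign computation from \eqref{eq:testfunction_grad} match the paper's argument exactly, and your bookkeeping of which swapped site lands in $\Lambda_{\pm}$ is correct. You also go further than the paper: you notice that $E_\alpha(x_0)$, as defined in Claim~\ref{claim:updown_nocanlelation}, rules out only the configuration ``$-x_0^{+\alpha}$ relevant, $-x_0^{-\alpha}$ irrelevant'', whereas the reverse configuration ``$-x_0^{+\alpha}$ irrelevant, $-x_0^{-\alpha}$ relevant'' is not covered by the hypothesis. The paper's own proof simply asserts that in that situation both gradients vanish, which does not follow from $\eta\notin E_\alpha(x_0)$ alone.

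Your attempt to exclude the reverse configuration, however, has a genuine gap. In your second branch (the component $U$ of $0$ in $[A_\eta]^{B_+}$ leaves $B_+\cap B_-$) the exit does pass through the hyperplane $\{e_\alpha\text{-coordinate}=l+1\}$, but the exit site has its remaining coordinates in $x_0+[-2l,2l]\subseteq[-3l,3l]$, which need not lie in $[-l,l]$; so the conclusion ``$0$ is connected to $\partial\Lambda_l$'' is unwarranted. More fundamentally, even if that connection were established, nothing in the hypotheses of the claim forbids it: Claim~\ref{claim:updown_nocanlelation} shows that $E_\alpha(x_0)$ \emph{implies} such a long bootstrap connection, not that $\eta\notin\bigcup_{x_0}E_\alpha(x_0)$ \emph{precludes} one. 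No deterministic argument on the reverse case can rescue the claim as stated. The honest repair is at the level of the statement: symmetrize the definition of $E_\alpha(x_0)$ (or add the mirror event to the excluded set); the probability estimate of Claim~\ref{claim:updown_nocanlelation} then carries over by the reflection $e_\alpha\mapsto -e_\alpha$, and the use in Claim~\ref{claim:perpendicular_directions} is unaffected.
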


\begin{proof}
We split the sum according to the projection of $x$ on $\Lambda_{l}^{\alpha}$
-- 
\[
\sum_{x\in\partial^{\alpha}\Lambda_{l}}\grad_{0,e_{\alpha}}\tau_{x}f=\sum_{x_{0}\in\Lambda_{l}^{\alpha}}\left(\grad_{0,e_{\alpha}}\tau_{x_{0}^{+\alpha}}f+\grad_{0,e_{\alpha}}\tau_{x_{0}^{-\alpha}}f\right).
\]
Fix one of these summands. If $-x_{0}^{+\alpha}$ is irrelevant for
$\tau_{x_{0}^{+\alpha}}\eta$ and 
$-x_{0}^{-\alpha}$ is irrelevant for $\tau_{x_{0}^{-\alpha}}\eta$,
then by the Proposition \ref{prop:non0grad}
\[
\grad_{0,e_\alpha} \tau_{x_0^{+\alpha}} f = \grad_{0,e_\alpha} \tau_{x_0^{-\alpha}} f = 0.
\]
Otherwise, since $\eta\notin E_{\alpha}(x_{0})$, both must be relevant, hence
\begin{align*}
\grad_{0,e_{\alpha}}\tau_{x_{0}^{+\alpha}}f & =\frac{\eta(e_{\alpha})-\eta(0)}{2\left(2l+1\right)^{d-1}}\times\begin{cases}
-1 & x_{0}\in\Lambda_{-},\\
1 & x_{0}\in\Lambda_{+};
\end{cases}\\
\grad_{0,e_{\alpha}}\tau_{x_{0}^{-\alpha}}f & =\frac{\eta(e_{\alpha})-\eta(0)}{2\left(2l+1\right)^{d-1}}\times\begin{cases}
1 & x_{0}\in\Lambda_{-},\\
-1 & x_{0}\in\Lambda_{+};
\end{cases}
\end{align*}
and their sum is $0$.
\end{proof}
\begin{claim}
\label{claim:perpendicular_directions}Fix $\alpha \in \{2,\dots,d \}$. Then 
\[
\mu\left[c_{0,e_{\alpha}}\left(\sum_{x\in\zz^{d}}\grad_{0,e_{\alpha}}\tau_{x}f\right)^{2}\right]\le Ce^{-\lambda l}.
\]
\end{claim}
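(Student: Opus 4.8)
The plan is to split $\sum_{x\in\zz^d}\grad_{0,e_\alpha}\tau_x f$ according to the two cases of Proposition~\ref{prop:non0grad}. Since $\alpha\neq 1$ the drift term $\delta_{\alpha,1}(\eta(e_1)-\eta(0))$ is absent, and since $c_{0,e_\alpha}\in\{0,1\}$ it suffices to work on the event $\{c_{0,e_\alpha}=1\}$; there, by Proposition~\ref{prop:non0grad}, every $x$ with $\grad_{0,e_\alpha}\tau_x f\neq 0$ falls under case~1 or case~2, and the two families live on disjoint regions --- case~1 forces $x\in\Lambda_{2l+1}\setminus\Lambda_{2l-1}$ while case~2 forces $x\in\partial^\alpha\Lambda_l$. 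Hence on $\{c_{0,e_\alpha}=1\}$ one has $\sum_x\grad_{0,e_\alpha}\tau_x f=T_1+T_2$ with $T_1=\sum_{x\in\Lambda_{2l+1}\setminus\Lambda_{2l-1}}\grad_{0,e_\alpha}\tau_x f$ and $T_2=\sum_{x\in\partial^\alpha\Lambda_l}\grad_{0,e_\alpha}\tau_x f$, and I would estimate $\mu[c_{0,e_\alpha}(T_1+T_2)^2]\le 2\mu[c_{0,e_\alpha}T_1^2]+2\mu[c_{0,e_\alpha}T_2^2]$, bounding the two pieces separately and using $c_{0,e_\alpha}\le 1$ where convenient.

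For $T_2$ the key is the cancellation of Claim~\ref{claim:updowncancelation}: on $\{c_{0,e_\alpha}=1\}\setminus\bigcup_{x_0\in\Lambda_l^\alpha}E_\alpha(x_0)$ one has $T_2=0$. Since the explicit formula \eqref{eq:testfunction_grad} bounds each of the $2(2l+1)^{d-1}$ summands by $1/(2(2l+1)^{d-1})$ in absolute value, $|T_2|\le 1$ deterministically, so $\mu[c_{0,e_\alpha}T_2^2]\le\mu\bigl(\bigcup_{x_0\in\Lambda_l^\alpha}E_\alpha(x_0)\bigr)\le|\Lambda_l^\alpha|\max_{x_0}\mu(E_\alpha(x_0))\le Cl^{d-1}e^{-\lambda l}$ by a union bound and Claim~\ref{claim:updown_nocanlelation}; the polynomial prefactor is absorbed after shrinking $\lambda$.

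For $T_1$, Proposition~\ref{prop:non0grad}(1) gives $|\grad_{0,e_\alpha}\tau_x f|\le Cl$ and shows that, for $x\in\Lambda_{2l+1}\setminus\Lambda_{2l-1}$, this term is nonzero only when the event $E_{x,x-e_\alpha}$ of Claim~\ref{claim:becomingrelevant_prob} holds (otherwise neither case of the proposition applies); there are only $O(l^{d-1})$ such sites. By Cauchy--Schwarz on the resulting indicator sum, $T_1^2\le Cl^2\cdot l^{d-1}\sum_x\One_{E_{x,x-e_\alpha}}$, whence $\mu[c_{0,e_\alpha}T_1^2]\le Cl^{d+1}\sum_x\mu(E_{x,x-e_\alpha})\le Cl^{2d}e^{-\lambda l}\le Ce^{-\lambda l}$ by Claim~\ref{claim:becomingrelevant_prob} and, again, absorbing the polynomial loss. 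Combining the two estimates yields the claim.

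Most of the content has already been extracted in Proposition~\ref{prop:non0grad} and Claims~\ref{claim:becomingrelevant_prob}, \ref{claim:updown_nocanlelation}, \ref{claim:updowncancelation}, so this step is mainly bookkeeping; the point I would be most careful about is verifying that those two cases are genuinely exhaustive on $\{c_{0,e_\alpha}=1,\ \grad_{0,e_\alpha}\tau_x f\neq 0\}$ --- so that $T_1+T_2$ is the entire sum --- and that the various polynomial-in-$l$ losses (union bounds, Cauchy--Schwarz, and the crude counts of boundary sites) are harmless against $e^{-\lambda l}$.
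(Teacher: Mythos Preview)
Your proposal is correct and follows essentially the same route as the paper: split the sum into the two regions dictated by Proposition~\ref{prop:non0grad}, use Claim~\ref{claim:becomingrelevant_prob} with Cauchy--Schwarz for the boundary-of-$\Lambda_{2l}$ piece, and use Claim~\ref{claim:updowncancelation} together with the probability bound of Claim~\ref{claim:updown_nocanlelation} for the $\partial^\alpha\Lambda_l$ piece. Your presentation is in fact slightly more careful than the paper's in making explicit the deterministic bound $|T_2|\le 1$ and in checking that the two cases exhaust all nonzero contributions.
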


\begin{proof}
We split in the different cases described in Proposition \ref{prop:non0grad}:
\[
\mu\left[c_{0,e_{\alpha}}\left(\sum_{x\in\zz^{d}}\grad_{0,e_{\alpha}}\tau_{x}f\right)^{2}\right] \le 
2 \mu\left[c_{0,e_{\alpha}}\left(\sum_{x\in\Lambda_{2l+1}\setminus\Lambda_{2l-2}}\grad_{0,e_{\alpha}}\tau_{x}f\right)^{2}\right]+2\mu\left[c_{0,e_{\alpha}}\left(\sum_{x\in\partial^{\alpha}\Lambda_{l}}\grad_{0,e_{\alpha}}\tau_{x}f\right)^{2}\right].
\]
We can bound the first term using Claim \ref{claim:becomingrelevant_prob}:
\[
\mu\left[c_{0,e_{\alpha}}\left(\sum_{x\in\Lambda_{2l+1}\setminus\Lambda_{2l-2}}\One_{E(x,x-e_{\alpha})}Cl\right)^{2}\right]\le Cl^{d}\mu\left[\sum_{x}\One_{E(x,x-e_{\alpha})}\right]\le Ce^{-\lambda l}.
\]
The second term, according to Claim \ref{claim:updowncancelation},
vanishes unless $\eta \in E_{\alpha}(x_{0})$ for some $x_0\in \Lambda_l^\alpha$,
so we are left with an error term which by Claim \ref{claim:updown_nocanlelation}
is bounded by
\[
\mu\left[\left(\frac{|\partial^{\alpha}\Lambda_{l}|}{2(2l+1)^{d-1}}\right)^{2}\sum_{x_{0}\in\Lambda_{l}^{\alpha}}\One_{E_{\alpha}(x_{0})}\right]\le Ce^{-\lambda l}. \qedhere
\]
\end{proof}

The next step is to consider the direction $e_1$:
\begin{claim} \label{claim:prob_irrelevant}
Fix $x\in\partial^{1}\left[-l,l\right]^{d}$. Then $-x$ is irrelevant
for $\tau_{x}\eta$ with probability smaller than $Ce^{-\lambda l}$.
\end{claim}

\begin{proof}
For $-x$ to be irrelevant it must be connected to one of $2(4l+1)^{d-1}$
sites on $\{0,1\}\times[-2l,2l]^{d-1}$. All of these sites are at
distance at least $l-2$ from $x$, and the statement follows by direct
application of Lemma \ref{lem:CM}.
\end{proof}

\begin{claim}
\label{claim:paralleldirection}For $e=e_{1}$, 
\[
\mu\left[c_{0,e}\left(\eta(e)-\eta(0)-\sum_{x\in\zz^{d}}\grad_{0,e}\tau_{x}f\right)^{2}\right]\le Ce^{-\lambda l}.
\]
\end{claim}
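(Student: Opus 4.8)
The plan is to handle the direction $e=e_{1}$ by separating, exactly as in Claim \ref{claim:perpendicular_directions}, the two cases of Proposition \ref{prop:non0grad}, but now keeping track of the extra term $\eta(e_{1})-\eta(0)$. Writing $\sum_{x}\grad_{0,e_{1}}\tau_{x}f$ as the sum over $x\in\Lambda_{2l+1}\setminus\Lambda_{2l-1}$ (case 1) plus the sum over $x\in\overline{\partial}^{1}\Lambda_{l}$ (case 2, i.e. the edges $(x,x-e_{1})$ crossing $\partial\Lambda_{l}$ in direction $e_{1}$), I would bound the case-1 contribution just as before: by Claim \ref{claim:becomingrelevant_prob} each such gradient is at most $Cl$ in absolute value and occurs only on the rare event $E_{x,x-e_{1}}$, so its total $L^{2}(\mu)$-norm is $\le Ce^{-\lambda l}$. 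This reduces the problem to showing
\[
\mu\left[c_{0,e_{1}}\left(\eta(e_{1})-\eta(0)-\sum_{x\in\overline{\partial}^{1}\Lambda_{l}}\grad_{0,e_{1}}\tau_{x}f\right)^{2}\right]\le Ce^{-\lambda l}.
\]

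The heart of the matter is a telescoping/flux computation. When $c_{0,e_{1}}=1$ and we are off the bad events, Proposition \ref{prop:non0grad} tells us that the relevant set does not change and each nonzero summand equals $\frac{\eta(e_{1})-\eta(0)}{2(2l+1)^{d-1}}$ times $\pm1$, with the sign dictated by whether $x$ (resp.\ $x-e_{1}$) lies in $\Lambda_{+}\cap\partial\Lambda_{l}$ or $\Lambda_{-}\cap\partial\Lambda_{l}$, \emph{and} by relevance of the site being moved. I would organize $\overline{\partial}^{1}\Lambda_{l}$ into ``columns'' parallel to $e_{1}$ indexed by $x_{0}\in\Lambda_{l}^{1}$: each column contributes the edge at the top face $x_{0}^{+1}$ and the edge at the bottom face $x_{0}^{-1}$, plus the single edge straddling the interface $\{0\}\times\cdots$ between $\Lambda_{-}$ and $\Lambda_{+}$ inside the column. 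Summing $f$ along such a column telescopes: the interface edge contributes the ``bulk'' term $\eta(e_{1})-\eta(0)$ (this is precisely the one particle crossing from $\Lambda_{-}$ to $\Lambda_{+}$, which is the moving edge $(0,e_{1})$ itself when $x$ is that interface site), while the top and bottom face edges contribute the boundary corrections, and these are designed to cancel the $\eta(e_{1})-\eta(0)$ coming from the interface unless relevance fails at top or bottom — i.e.\ unless $\eta\in E_{1}(x_{0})$ for the analogue of the event in Claim \ref{claim:updown_nocanlelation} with $\alpha$ replaced by the direction $1$, or one of the face sites is irrelevant. So, up to the rare events, $\eta(e_{1})-\eta(0)-\sum_{x\in\overline{\partial}^{1}\Lambda_{l}}\grad_{0,e_{1}}\tau_{x}f=0$.

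To make this rigorous I would prove the direction-$1$ analogues of Claims \ref{claim:updown_nocanlelation} and \ref{claim:updowncancelation}: define, for $x_{0}$ in the interface slice $\{0\}\times[-l,l]^{d-1}$ (the projection relevant to direction $1$), the event that the top boundary site $-x_{0}^{+1}$ is relevant for $\tau_{x_{0}^{+1}}\eta$ while the bottom boundary site $-x_{0}^{-1}$ is irrelevant for $\tau_{x_{0}^{-1}}\eta$ (or vice versa); by Claim \ref{claim:bp_distantinfluence} this forces the origin to be connected to $\partial\Lambda_{l}$ in $[A_{\eta}]^{\Lambda_{3l}}$, and by Lemma \ref{lem:CM} (with $l'<10l$) its probability is $\le Ce^{-\lambda l}$; union over the $Cl^{d-1}$ choices of $x_{0}$ keeps it $\le Ce^{-\lambda l}$. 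Off this event and off $\cup_{x}E_{x,x-e_{1}}$, the column-by-column telescoping gives exact cancellation, and on the bad event the summand is crudely bounded by $C l^{d}$ (at most $Cl^{d-1}$ boundary edges, each gradient $\le Cl$), so its $L^2$ contribution is $\le Cl^{2d}\cdot Ce^{-\lambda l}\le Ce^{-\lambda'l}$ after shrinking $\lambda$.

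The main obstacle I expect is the bookkeeping of signs and of which site's relevance controls which term in the interface/top/bottom telescoping — in direction $1$ the moving edge $(0,e_{1})$ is itself one of the interface edges, so the ``bulk'' term $\eta(e_{1})-\eta(0)$ is not extraneous but is exactly what the face corrections must absorb, and one has to check the four sign cases of \eqref{eq:testfunction_grad} combine correctly with the definition of $f$ (note $f$ weights $\Lambda_{+}$ with $+$ and $\Lambda_{-}$ with $-$). Once the direction-$1$ relevance-mismatch event is set up and shown to be rare, everything else is the same routine estimation as in Claims \ref{claim:becomingrelevant_prob}--\ref{claim:perpendicular_directions}; combining Claim \ref{claim:perpendicular_directions} (for $\alpha\neq1$) with this claim and the $O(\varepsilon)$ bound on the soft part then yields \eqref{eq:testineq} and hence the upper bound in Theorem \ref{thm:main}.
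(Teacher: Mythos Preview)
Your splitting into the two cases of Proposition \ref{prop:non0grad} and your handling of case 1 are exactly as in the paper. The gap is in your picture of case 2.

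There is no ``interface'' contribution. The edge straddling $\Lambda_{-}$ and $\Lambda_{+}$ (i.e.\ the one with $x=0$ or $x=e_{1}$ in the $\tau_{x}$-frame) lies \emph{inside} $\Lambda_{l}$, hence is not in $\overline{\partial}\Lambda_{l}$ and does not fall under case 2 of Proposition \ref{prop:non0grad}; and it does not secretly contribute elsewhere, because the sites $0$ and $e_{1}$ belong to $\{0,1\}\times[-2l,2l]^{d-1}$ and are therefore \emph{irrelevant by definition}, so their occupation never enters the sum defining $f$. Your proposed telescoping with three edges per column (top, interface, bottom) does not exist; only the two face edges at $x_{0}^{+1}$ and $x_{0}^{-1}$ contribute.

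Moreover, these two face contributions do \emph{not} cancel as in the $\alpha\neq 1$ case. If you read off the signs in \eqref{eq:testfunction_grad} for $e=e_{1}$: when $x_{1}=l+1$ one has $x-e_{1}\in\Lambda_{+}\cap\partial\Lambda_{l}$, giving sign $+1$; when $x_{1}=-l$ one has $x\in\Lambda_{-}\cap\partial\Lambda_{l}$, again sign $+1$. Thus all $2(2l+1)^{d-1}$ face terms carry the \emph{same} sign, and (when each $-x$ is relevant) they add up to
\[
\sum_{x\in\partial^{1}\Lambda_{l}}\grad_{0,e_{1}}\tau_{x}f=2(2l+1)^{d-1}\cdot\frac{\eta(e_{1})-\eta(0)}{2(2l+1)^{d-1}}=\eta(e_{1})-\eta(0),
\]
which is exactly the bulk term. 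No $E_{1}(x_{0})$-type mismatch event is needed; the only bad event is that some $-x$ with $x\in\partial^{1}\Lambda_{l}$ is irrelevant, and since such $-x$ sits at $e_{1}$-distance $\ge l-1$ from $\{0,1\}\times[-2l,2l]^{d-1}$, Lemma \ref{lem:CM} bounds its probability by $Ce^{-\lambda l}$. The paper's argument is precisely this, and it is both simpler and correct where your three-edge telescoping is not.
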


\begin{proof}
The proof of the claim consists in showing that each site on $\partial^{1}\Lambda_{l}$
contributes $\frac{\eta(e)-\eta(0)}{|\partial^{1}\Lambda_{l}|}$ to
the sum, up to a small error term.

First, using Proposition \ref{prop:non0grad}, we write 
\begin{align*}
\mu\left[c_{0,e}\left(\eta(e)-\eta(0)-\sum_{x\in\zz^{d}}\grad_{0,e}\tau_{x}f\right)^{2}\right] \le &
	2\mu\left[c_{0,e}\left(\sum_{x\in\Lambda_{2l+1}\setminus\Lambda_{2l-2}}\grad_{0,e}\tau_{x}f\right)^{2}\right] \\
 & +2\mu\left[c_{0,e}\left(\eta(e)-\eta(0)-\sum_{x\in\partial^{1}\Lambda_{l}}\grad_{0,e}\tau_{x}f\right)^{2}\right].
\end{align*}
The first term, just as in the proof of Claim \ref{claim:perpendicular_directions},
is bounded by $Ce^{-\lambda l}$ according to Claim \ref{claim:becomingrelevant_prob}.

In order to bound the second term, we start by assuming that all sites
of $-\partial^{1}\Lambda_{l}$ are relevant. In this case, 
\[
\sum_{x\in\partial^{1}\Lambda_{l}}\grad_{0,e}\tau_{x}f=\sum_{x\in\partial^{1}\Lambda_{l}}\frac{\eta(e)-\eta(0)}{2(2l+1)^{d-1}}=\eta(e)-\eta(0),
\]
so 
\[
\mu\left[c_{0,e}\left(\eta(e)-\eta(0)-\sum_{x\in\partial^{1}\Lambda_{l}}\grad_{0,e}\tau_{x}f\right)^{2}\One_{-\partial^{1}\Lambda_{l}\subseteq\mathcal{R}}\right]=0.
\]
Finally, by Claim \ref{claim:prob_irrelevant}
the probability that $\partial^{1}\Lambda_{l}$
contains irrelevant sites is smaller than $Ce^{-\lambda l}$: 
\[
\mu\left[c_{0,e}\left(\eta(e)-\eta(0)-\sum_{x\in\partial^{1}\Lambda_{l}}\grad_{0,e}\tau_{x}f\right)^{2}\One_{-\partial^{1}\Lambda_{l}\not\subseteq\mathcal{R}}\right]\le Ce^{-\lambda l}.
\]
The claim thus follows by summing the contribution of the three terms.
\end{proof}
All that is left is to combine Claims \ref{claim:perpendicular_directions}
and \ref{claim:paralleldirection}, proving inequality (\ref{eq:testineq})
and hence the second part of Theorem \ref{thm:main}. \qed

\section{Further problems}
\begin{itemize}
\item Prove convergence to a hydrodynamic limit without the soft constraint
from a more restricted family of initial states (as in \cite{GoncalvesLandimToninelli}).
\item Improve the bounds on the diffusion coefficient, and in particular
find matching upper and lower bound without a logarithmic correction.
In the case of the closely related Fredrickson-Andersen model, where
similar bounds have been obtained for the spectral gap (\cite{TowardsUniversality}),
the logarithmic correction could be removed, and, moreover, the exact
constant multiplying $1/(1-\rho)^{d-k+1}$ could be identified \cite{HMT2020}.
\item Study qualitative properties of the diffusion coefficient -- is it decreasing
in $\rho$? Is it continuous? Smooth? Is $D^{(0)}=D$?
\item Understand the hydrodynamic limit of more KCLGs. The comparison argument
of Section \ref{sec:lowerbound} could be used in order to estimate
the diffusion coefficient whenever an appropriate multistep move
could be constructed, and may be useful in lager generality than presented
here.
\item The bounds on the diffusion coefficient may have consequences other
than the hydrodynamic limit -- in general, we expect the correlation
$\mu(\eta(0)e^{t\mathcal{L}}\eta(x))-\rho^{2}$ to behave like $\rho(1-\rho)(4\pi t\,D)^{-d/2}\,e^{-\frac{x^{2}}{4tD}}$
(see, e.g., \cite{Spohn2012IPS}). It has been shown in \cite{CMRT2010}
that in the Kob-Andersen model, for $x=0$, this correlation decays at
least as fast as $C\,(\log t)^{5}/t$ for some unidentified constant $C$.
Any progress towards the predicted $\rho(1-\rho)(4\pi t\,D)^{-d/2}\,e^{-\frac{x^{2}}{4tD}}$
(for $D$ is as in equation \eqref{eq:D_eps0}) would be an interesting result.
\end{itemize}

\section*{Acknowledgments}

I would like to thank Cl\'ement Erignoux, Alessandra Faggionato,
Fabio Martinelli, and Patr\'icia Gon\c{c}alves for very useful discussions.
I acknowledge the support of the ERC Starting Grant 680275 MALIG.

\appendix

\section{\label{sec:appendix}The gradient condition in cooperative models}

In this appendix we will see that cooperative kinetically constrained
lattice gas models (KCLGs) are non-gradient.

A general KCLG is a Markov process with configuration space $\Omega=\left\{ 0,1\right\} ^{\zz^{d}}$,
determined by a set of constraints giving each edge $(x,y)\in\mathcal{E}(\zz^{d})$
a rate $c_{x,y}(\eta)\in\{0\}\cup[1,\infty)$, for any configuration
$\eta\in\Omega$. We will make the following assumptions:
\begin{enumerate}
\item The model is homogeneous, i.e., the constraint is translation invariant.
\item The constraint $c_{x,y}$ depends only on the configuration outside
$x$ and $y$.
\item The constraints have finite range, i.e., $c_{x,y}$ depends only on
the occupation of sites in the box $x+\Lambda_{R}$, where $R$ is
called the \emph{range}.
\item The constraint is non-degenerate, i.e., for every edge $(x,y)$ of
$\zz^{d}$ there exist a configuration $\eta$ such that $c_{x,y}(\eta)>0$
and $\eta'$ such that $c_{x,y}(\eta')=0$.
\item For fixed $x,y$ the constraint $c_{x,y}(\eta)$ is a decreasing function
of $\eta$, i.e., adding more empty sites could only help the constraint
to be satisfied.
\end{enumerate}
With such constraints, the process is given by a generator as in equation
(\ref{eq:generator}).
\begin{defn}[connected configurations]
Fix a KCLG and two configurations $\eta,\eta'$. We say that $\eta'$
is \emph{connected} to $\eta$ if there exists a sequence of configuration
$\eta_{0},\dots,\eta_{T}$ such that $\eta_{0}=\eta$, $\eta_{T}=\eta'$,
and for all $t\in\{0,\dots,T-1\}$ there exist $x_{t+1}\sim y_{t+1}$
such that $\eta_{t+1}=\eta_{t}^{x_{t+1},y_{t+1}}$, with $c_{x_{t+1},y_{t+1}}(\eta_{t})\ge1$.
For any fixed $e\in \{\pm e_1,\dots,\pm e_d\}$, we say that $\eta'$ is $e$\emph{-connected}
to $\eta$ if, in addition, $y_{t+1}=x_{t+1}+e$ and $\eta_{t}(x_{t})=0$,
namely, all transitions move a vacancy in the direction $e$ (or,
equivalently, a particle in the direction $-e$). Note that $\eta'$
is connected to $\eta$ if and only if $\eta$ is connected to $\eta'$;
and $\eta'$ is $e$-connected to $\eta$ if and only if $\eta$ is
$(-e)$-connected to $\eta'$.
\end{defn}

\begin{defn}
Let $A\subseteq\zz^{d}$. The configuration $\eta_{A}$ is defined
as 
\[
\eta_{A}(x)=\begin{cases}
0 & x\in A,\\
1 & \text{otherwise}.
\end{cases}
\]
\end{defn}

KCLGs could be either \emph{cooperative} or \emph{non-cooperative}
(see \cite[Definition 1.1]{CMRT2010}). We remind here that a non-cooperative
model is a model in which there exists a \emph{mobile cluster}, defined
as follows:
\begin{defn}[mobile cluster]\label{def:mobile_cluster}
Let $A$ be a finite non-empty subset of $\zz^{d}$. We say that
$A$ is a \emph{mobile cluster} if:
\begin{enumerate}
\item For all $z\in\zz^{d}$, the configuration $\eta_{A}$ is connected
to the configuration $\eta_{z+A}$. 
\item For every edge $(x,y)$, there exists a translation $z\in\zz^{d}$
such that $c_{x,y}(\eta_{z+A})\ge1$.
\end{enumerate}
\end{defn}
%

Gradient models, in our context, are interacting particle systems
with conserved number of particles, in which the current is a gradient
of some local function. This property significantly
simplifies the analysis of their hydrodynamic limits (see, e.g., \cite[Definition 2.5]{KipnisLandim}).
The purpose of this appendix is to prove the following result:
\begin{thm}
\label{thm:cooperativeisnongradient}Cooperative KCLGs are non-gradient.
\end{thm}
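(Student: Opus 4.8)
The plan is to show that for a cooperative KCLG, the instantaneous current across an edge cannot be written as a discrete gradient of a local function, i.e.\ there is no local $h:\Omega\to\rr$ with
\[
W_{0,e_1}(\eta) := c_{0,e_1}(\eta)\,(\eta(e_1)-\eta(0)) = \sum_{x\in\zz^d}\grad_{0,e_1}\tau_x h(\eta)
\]
(the right-hand side being the telescoping form a gradient current must take, cf.\ \cite[Section 2.5]{KipnisLandim}). Suppose for contradiction that such an $h$ exists, with range $R_h$. The key structural input from cooperativity is the \emph{absence of a mobile cluster}: for every finite empty set $A$, there is an edge $(x,y)$ such that $\eta_{z+A}$ is \emph{not} connected to $\eta_{z+A}^{x,y}$ for any translate — equivalently, one can find two configurations that differ only near the origin, agree with $\eta_A$ (mostly empty on a huge region) far away, yet lie in different connected components of the dynamics. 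Actually the cleaner route: use that in a cooperative model, a single large droplet of empty sites of bounded size cannot be transported, so there exist arbitrarily large boxes $\Lambda$ and configurations $\eta$, $\eta'$ which are \emph{frozen} (every constraint touching $\Lambda$ evaluates to $0$) but have different particle content in $\Lambda$.

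First I would recall the standard consequence of the gradient condition: summing the putative identity over a box and using reversibility/stationarity of $\mu_\rho$, any local function $h$ realizing the current forces the net flow through the boundary of a box $\Lambda_M$ to equal $\sum_{x\in\partial\Lambda_M}\bigl(g(\tau_x\eta)-g(\tau_x\text{shifted})\bigr)$ for a \emph{bounded} correction; in particular, on a frozen configuration (all constraints near $\partial\Lambda_M$ equal to $0$), the current identity reads $0=\sum_{x}\grad_{0,e_1}\tau_x h(\eta)$ locally, which pins down strong compatibility conditions on $h$. Second, I would invoke the combinatorial heart: because the model is cooperative, Observation~\ref{obs:BPandKA} and the bootstrap-percolation picture say that a finite empty region cannot unblock an edge far away — formally, there exist configurations $\eta_1,\eta_2$ differing at a single edge $(0,e_1)$ with $c_{0,e_1}(\eta_1)=c_{0,e_1}(\eta_2)=0$, equal outside $\Lambda_M$, but such that \emph{no} finite sequence of allowed exchanges inside a slightly larger box connects them. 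Third, I would feed $\eta_1,\eta_2$ into the gradient identity: since $h$ is local of range $R_h$ and we may take $M\gg R_h$, the difference $h(\eta_1)-h(\eta_2)$ (and all its relevant translates) is forced by the current on the \emph{interior} edges, which vanishes, while the boundary contribution is the same for both; hence the identity cannot distinguish $\eta_1$ from $\eta_2$, yet the left-hand current $W$ does distinguish a neighboring configuration — contradiction. The precise way to extract the contradiction is to exhibit a \emph{closed loop} in configuration space (a sequence of allowed exchanges returning to the start) along which the telescoped sum $\sum\grad h$ must vanish, but along which the physical current sums to a nonzero value; non-cooperativity would let a mobile cluster ``close'' every such loop, and cooperativity is exactly the obstruction.

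Concretely, the cleanest contradiction to aim for: build two configurations $\zeta,\zeta'$ that are connected by the dynamics via a path that transports one particle all the way across a large box, but for which the only ``short'' (bounded-range) description of the accumulated current would require $h$ to sense a frozen empty droplet of unbounded size — impossible for a local $h$. I would formalize ``transports a particle across a box using a large droplet'' exactly via a multistep move as in Lemma~\ref{lem:multistepmove} (which exists because a large enough empty box always contains a droplet), run it forward and then run a \emph{different} sequence back, forming a loop whose net current is $Le_1$ but whose $\sum\grad h$ is a bounded telescoping expression — and since the droplet can be placed arbitrarily far out, boundedness of $h$'s range forces $0$, contradiction with $L\to\infty$.

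\textbf{Main obstacle.} The delicate point is making precise the claim that cooperativity genuinely obstructs the gradient representation, rather than merely complicating it: one must produce, for \emph{every} candidate local $h$, a specific pair (or loop) of configurations on which the identity fails, and the size of this witness must be allowed to grow with $R_h$. This requires quantifying ``no mobile cluster'' into a statement about frozen droplets of arbitrary size whose content cannot be communicated to a fixed edge by any finite local function — essentially the same bootstrap-percolation geometry used in Section~\ref{sec:lowerbound} and in \cite{ToninelliBiroliFisher}, now deployed to \emph{contradict} locality rather than to build a test function. I expect the bulk of the work to be setting up this witness family carefully (choosing the frozen region, the single-edge discrepancy, and verifying the constraints all vanish near the boundary), after which the contradiction with the telescoping gradient identity is a short computation.
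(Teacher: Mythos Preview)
Your proposal has genuine gaps. The gradient condition you write is not the standard one: it should read $W_{0,e_1}(\eta)=\tau_{e_1}h(\eta)-h(\eta)$ for a local $h$ (a \emph{spatial} difference), whereas your right-hand side $\sum_x\grad_{0,e_1}\tau_x h$ involves the \emph{exchange} operator and is the quantity appearing in the variational formula~(\ref{eq:vy}), a different object. More importantly, the loop argument cannot produce the contradiction you describe: along any closed path $\eta_0\to\cdots\to\eta_T=\eta_0$ in configuration space, every telescoping sum of a function of $\eta$ vanishes automatically, and by particle conservation the total displacement $\sum_x x(\eta_T(x)-\eta_0(x))$ vanishes too --- there is no closed loop with ``net current $Le_1$'' in any sense that the gradient identity can detect. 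You also lean on the multistep move of Lemma~\ref{lem:multistepmove}, a KA-specific construction, while Theorem~\ref{thm:cooperativeisnongradient} concerns arbitrary cooperative KCLGs; and the claim that cooperativity furnishes frozen configurations of arbitrary size with prescribed content is not substantiated in that generality.

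The paper's route is short and avoids all of this. It rests on the torus criterion (Fact~\ref{fact:gradient0integral}): a gradient model satisfies $\sum_x c_{x,x+e_1}(\eta)(\eta(x)-\eta(x+e_1))=0$ for every periodic $\eta$, so it suffices to exhibit a single periodic configuration with nonzero total current. The structural input from cooperativity is Proposition~\ref{prop:infinitereachable_noncooperative}: in a cooperative model there is a direction, say $e_1$, in which every configuration with finitely many vacancies has finite $e_1$-stretch (vacancies can be pushed only boundedly far to the right). Starting from a finitely supported $\eta_0$ with $c_{0,e_1}(\eta_0)=1$, $\eta_0(0)=0$, $\eta_0(e_1)=1$, one greedily moves vacancies to the right until no further rightward move is available; bounded stretch forces termination at some $\eta_n$, $n\ge1$. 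In $\eta_n$ no vacancy can move right, yet the last edge flipped now carries a particle with an empty right neighbor and satisfied constraint, so the total $e_1$-current is strictly positive. Placing $\eta_n$ on a large enough torus violates the gradient identity. No frozen droplets, bootstrap percolation, or multistep moves are needed.
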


In order to prove that a model is non-gradient, we will consider the
model on a torus, and show that the integral of the current does not
always vanish:
\begin{lem}
\label{fact:gradient0integral}Consider a KCLG, and assume that for
$N$ large enough, there exists a configuration on the torus $\eta\in\left\{ 0,1\right\} ^{\mathbb{T}^d_N}$,
such that 
\[
\sum_{x,y\in\mathbb{T}^d_N}(x-y)(\eta(x)-\eta(y))c_{x,y}(\eta)\neq0.
\]
Then the model is non-gradient.
\end{lem}

\begin{proof}
Assume that the model is gradient. That is, by \cite[Definition 2.5]{KipnisLandim},
for some $n_0 \in \nn$, for any $1\le i \le d, 1 \le n \le n_0$ there exist
a cylinder function $h_{i,n}$ and a finite range function $p_{i,n}$ satisfying
$\sum_{x\in \mathbb{T}^d_N}p_{i,n}(x)=0$,
such that the current is given by
\[
W_{0,e_i} (\eta) = \sum_{n=1}^{n_0} \sum_{x \in \mathbb{T}^d_N} p_{i,n}(x)\tau_x h_{i,n}(\eta)
\] 
for every $i$.

Then for any $e\in \{e_1,\dots, e_d\}$
\begin{align*}
\sum_{y\in\mathbb{T}^d_N} W_{y,y+e}(\eta)
&= \sum_{y\in\mathbb{T}^d_N}
   \tau_{-y} \sum_{n=1}^{n_0} \sum_{x \in \mathbb{T}^d_N} p_{i,n}(x)\tau_x h_{i,n}(\eta) \\
&= \sum_{x \in \mathbb{T}^d_N} \sum_{z\in\mathbb{T}^d_N}
   \sum_{n=1}^{n_0} p_{i,n}(x)\tau_z h_{i,n}(\eta)\\
&= 0.
\end{align*}
This concludes the proof, recalling
\[
W_{x,x+e}(\eta) = (\eta(x)-\eta(x+e))c_{x,y}(\eta). \qedhere
\]
\end{proof}

The construction of such $\eta$ for a cooperative KCLG is based on
the notion of reachable sites:
\begin{defn}[reachable sites and $e$-stretch]
We say that a site is \emph{reachable} from a configuration $\eta$
if it is empty for some $\eta'$ which is connected to $\eta$. For
$e\in\left\{ \pm e_{1},\dots,\pm e_{d}\right\} $ we say that a site
is $e$-reachable for a configuration $\eta$ if it is empty for some
$\eta'$ which is $e$-connected to $\eta$. The \emph{$e$-stretch}
of $\eta$ is defined as 
\[
\sup\left\{ e\cdot x:x\text{ is }e\text{-reachable}\right\} .
\]
\end{defn}

By the definition of non-cooperative models, it is immediate that
if $\eta$ contains a mobile cluster then for every site $x$ there
exists $\eta'$ connected to $\eta$ for which $\eta'(x)=0$. In the
next proposition we will see that if we require $e$-connectivity
the converse is also true --
\begin{prop}
\label{prop:infinitereachable_noncooperative}Assume that for all
$e\in\left\{ \pm e_{1},\dots,\pm e_{d}\right\} $ there exists a finite
subset $A_{e}$ of $\zz^{d}$, such that the $e$-stretch of $\eta_{A_{e}}$
is infinite. Then the model is non-cooperative.
\end{prop}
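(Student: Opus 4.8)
The plan is to prove the contrapositive: assuming that for every $e\in\{\pm e_1,\dots,\pm e_d\}$ there is a finite set $A_e$ with infinite $e$-stretch for $\eta_{A_e}$, I will construct a mobile cluster, thereby showing the model is non-cooperative. The starting observation is that, by assumption (5), the constraints are monotone in the vacancies, so the superposition $A:=\bigcup_e A_e$ (possibly after finitely many translations so that the pieces do not obstruct one another) still has infinite $e$-stretch for $\eta_A$, simultaneously in all $2d$ directions --- any transition allowed when only the vacancies of $A_e$ are present is still allowed when all vacancies of $A$ are present. So without loss of generality I may assume a single finite $A$ whose configuration $\eta_A$ has infinite $e$-stretch for all $e$.

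The core step is to upgrade ``infinite $e$-stretch'' to ``$\eta_A$ is connected to $\eta_{z+A}$ for all $z$.'' First I would handle a single-direction shift: since the $e$-stretch of $\eta_A$ is infinite, there is a sequence of $e$-moves (each moving one vacancy by $+e$) starting from $\eta_A$ that empties a site arbitrarily far in the $e$ direction; I would argue that, by taking such a path far enough and then reversing/rerouting, one in fact produces a path of legal exchanges from $\eta_A$ to $\eta_{A+e}$. The key point here is a ``conveyor belt'' argument: with enough room created ahead, the vacancies of $A$ can be marched one lattice step in the $e$ direction and the cluster re-formed, using that during the whole procedure the constraints only ever see vacancies (never fewer than those of some translate of $A$), so assumption (2) that $c_{x,y}$ ignores the occupation at $x,y$ lets us reconnect. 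Iterating gives $\eta_A$ connected to $\eta_{A+ne}$ for all $n\ge 0$, and running this for $e=+e_\alpha$ and $e=-e_\alpha$ and all $\alpha$, and concatenating, gives connectivity of $\eta_A$ to $\eta_{z+A}$ for every $z\in\zz^d$, which is condition (1) of the mobile cluster definition.

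For condition (2) --- that every edge $(x,y)$ admits a translate $z$ with $c_{x,y}(\eta_{z+A})\ge 1$ --- I would use non-degeneracy (assumption (4)) together with monotonicity and finite range: there is some configuration $\eta'$ with $c_{x,y}(\eta')>0$, and since $c_{x,y}$ depends only on sites in $x+\Lambda_R$ and is decreasing in $\eta$, it suffices that a translate of $A$ covers all the vacancies of $\eta'$ inside that box. This is where the freedom of choosing $A$ large matters: one should first enlarge $A$ (again using that extra vacancies never hurt and don't destroy the infinite stretches, by monotonicity) so that some translate of $A$ contains, inside any given $R$-box, an arbitrary prescribed finite vacancy pattern --- in particular the one needed to make $c_{x,y}$ fire. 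By translation invariance (assumption (1)) it is enough to check one representative of each of the $d$ edge-orbits.

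The main obstacle I expect is the ``conveyor belt'' reconnection step: ``infinite $e$-stretch'' only says some vacancy gets far away, not that the whole cluster translates, so I must carefully show that having created vacant space far ahead one can legally shuffle the vacancies of $A$ forward by exactly one step and recover a clean translate $\eta_{A+e}$ (rather than some messier connected configuration). The clean way to organize this is probably to prove the slightly weaker statement that $\eta_A$ is connected to $\eta_{A+e}$ \emph{plus possibly extra vacancies far away}, then observe that those extra vacancies, being reachable, can be reabsorbed by reversing part of the path; alternatively one invokes the more restrictive variant of condition~(2) mentioned in the remark, which already packages exactly this kind of local reconnection. Either route reduces condition~(1) of mobility to a finite, localized combinatorial lemma about moving vacancies one step given enough surrounding empty space.
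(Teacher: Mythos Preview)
Your overall strategy---build a mobile cluster from the $A_e$---matches the paper's, and you correctly locate the crux at the ``conveyor belt'' step: passing from ``infinite $e$-stretch'' to ``$\eta_A$ is connected to a translate of itself.'' But your proposed resolutions of that step do not work. The ``weaker statement'' that $\eta_A$ is connected to $\eta_{A+e}$ \emph{plus possibly extra vacancies far away} is incoherent: exchanges conserve the number of vacancies, so any configuration connected to $\eta_A$ has exactly $|A|$ empty sites, and there is nothing to reabsorb. Invoking the restrictive variant of condition~(2) from the remark is beside the point---that variant concerns what an already-built mobile cluster can do to a distant edge, not how to establish the translation property~(1). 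And the conveyor-belt heuristic is precisely the thing to be proved; your parenthetical claim that along the path ``the constraints only ever see vacancies never fewer than those of some translate of $A$'' is false in general, since during an $e$-path the vacancy set may deform into shapes that contain no translate of $A$ whatsoever.

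The paper supplies the missing idea in Claim~\ref{claim:infinitestretch_movingcluster}. Take $A$ of \emph{minimal} cardinality among sets with infinite $e$-stretch; an induction on the size shows that any strictly smaller vacancy set has uniformly bounded $e$-stretch. It follows that along an $e$-path from $\eta_A$ the vacancies cannot split into two groups separated by a gap exceeding that bound plus the range $R$---otherwise the leading group, having fewer than $|A|$ vacancies and receiving no influence from the trailing one, would itself have bounded stretch. Hence for enough times $t$ the vacancy set sits in a box of fixed dimensions anchored at its rightmost site, and a pigeonhole on the finitely many possible shapes produces two times with identical shape but different rightmost coordinates: some $A'$ with $\eta_{A'}$ $e$-connected to $\eta_{A'+ne}$ for an \emph{unspecified} $n\ge1$ (not $n=1$). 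The mobile cluster is then assembled through several further layers (Claims~\ref{claim:empyingtraslations}--\ref{claim:flipping_edges} and the final proof): disjoint translates of $A'$ are used to empty any prescribed finite pattern arbitrarily far in the $e$-direction, these constructions are nested across the coordinate directions, and finally two such gadgets are placed in opposite orthants so that each can shift the other. Simply taking a union of the $A_e$ and ``enlarging'' does not shortcut any of this.
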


Before proving this proposition, we will see how it implies Theorem
\ref{thm:cooperativeisnongradient}. Consider a cooperative KCLG,
so by Proposition \ref{prop:infinitereachable_noncooperative} for
some $e\in\{\pm e_{1},\dots,\pm e_{d}\}$ and any $L\in\nn$, configurations
that are entirely filled outside $\Lambda_{L}$ have finite $e$-stretch.
We will assume without loss of generality that $e=e_{1}$.

Since the model is non-degenerate, there exists a configuration $\eta_{0}$
for which $c_{0,e_{1}}(\eta_{0})=1$. Since the model has finite range
$R$, we may assume that this configuration is entirely filled outside
$\Lambda_{R}$; and since the constraint does not depend on the occupation
at $0$ and $e_{1}$ we assume $\eta_{0}(0)=0$ and $\eta_{0}(e_{1})=1$.
We will now construct a sequence of configuration starting at $\eta_{0}$,
so that $\eta_{i+1}$ is obtained from $\eta_{i}$ by moving a $0$
to the right, i.e., $\eta_{i+1}=\eta_{i}^{x_{i},x_{i}+e_{1}}$ for
some $x_{i}$ such that $c_{x_{i},x_{i}+e_{1}}(\eta_{i})>0$, $\eta_{i}(x_{i})=0$,
and $\eta_{i}(x_{i}+e_{1})=1$. When, for some $i$, more than one
such choice of $x$ is possible, we choose one arbitrarily. We stop
when none of the sites satisfy the required conditions.

Since the $e_{1}$-stretch of $\eta_0$ is finite the construction must stop at
some step $n<\infty$. On the other hand, we chose $\eta_{0}$ such
that $c_{0,e_{1}}(\eta_{0})\ge1$, $\eta_{0}(0)=0$, and $\eta_{0}(e_{1})=1$,
so $n\ge1$. Hence, for the configuration $\eta=\eta_{n}$, for all
$x\in\zz^{d}$
\[
c_{x,x+e_{1}}(\eta)(1-\eta(x))\eta(x+e_{1})=0,
\]
but for $x^{*}=x_{n-1}$ (using $c_{x^{*},x^{*}+e_{1}}(\eta_n)=c_{x^{*},x^{*}+e_{1}}(\eta_{n-1})=1$), we know that
\[
c_{x^{*},x^{*}+e_{1}}(\eta)\eta(x^{*})(1-\eta(x^{*}+e_{1}))\ge1.
\]
That is, 
\[
\sum_{x\in\zz^{d}}(\eta(x)-\eta(x+e_{1}))c_{x,x+e_{1}}(\eta)\ge1.
\]
Since $\eta$ is filled outside $\Lambda_{R+n}$, we may as well sum
over $x$ in a large enough torus $\mathbb{T}^d_{100R+n}$. Therefore,
by Lemma \ref{fact:gradient0integral}, the model is indeed non-gradient.
\qed \\

We return to the proof of Proposition \ref{prop:infinitereachable_noncooperative}. 
\begin{claim}
\label{claim:infinitestretch_movingcluster}Fix a finite non-empty
$A\subset\zz^{d}$, and $e\in\left\{ \pm e_{1},\dots,\pm e_{d}\right\} $.
Assume that the $e$-stretch of $\eta_{A}$ is infinite. Then there
exists a finite non-empty $A'\subset\zz^{d}$ and a strictly positive
integer $n$, such that $\eta_{A'}$ is $e$-connected to $\eta_{ne+A'}$.
\end{claim}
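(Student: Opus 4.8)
The plan is to exploit the infinitude of the $e$-stretch together with the finite range $R$ of the constraints in order to extract a "self-similar" step. Fix $A$ with infinite $e$-stretch; by definition there is a sequence of configurations $\eta_A = \zeta_0, \zeta_1, \dots, \zeta_m$, all filled outside a bounded region, with $\zeta_{j+1} = \zeta_j^{z_j, z_j + e}$, $\zeta_j(z_j)=0$, $\zeta_j(z_j+e)=1$, $c_{z_j,z_j+e}(\zeta_j)\ge 1$, and with $\zeta_m$ having an empty site $x^\star$ arbitrarily far in the direction $e$, i.e.\ $e\cdot x^\star$ as large as we please. First I would record that each intermediate $\zeta_j$ has all its empty sites contained in a set of the form $A + \{$ bounded displacement$\}$ whose $e$-extent has grown by at most the number of steps taken so far; more importantly, the set of empty sites of $\zeta_j$ has cardinality exactly $|A|$ (moving a vacancy preserves the number of vacancies), and it is contained in some translate $v_j + [-D, D]^d$ of a box whose size $D$ depends only on how far we chose to push, but the \emph{shape} lives in a finite alphabet once we translate.

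The key step is a pigeonhole argument. Since we may take the $e$-stretch as large as we like, we run the $e$-connecting sequence long enough that the empty cluster has advanced in the $e$-direction by a distance much larger than $R$ and than $\mathrm{diam}(A)$. Along the way, consider the sequence of \emph{centered} empty-site configurations: at each step look at the set of empty sites, translate it so that (say) its lexicographically minimal point sits at the origin, and record the resulting finite subset of $\zz^d$. This centered set takes values in the (infinite, but locally finite) collection of finite subsets of $\zz^d$; however, because a single vacancy hop changes the set by moving one point by one lattice step, consecutive centered sets lie within bounded Hausdorff distance, and — crucially — the $e$-coordinate of the true (uncentered) cluster is monotonically non-decreasing in a suitable averaged sense and ultimately unbounded. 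Restricting attention to the steps at which the uncentered cluster achieves a new record value of its maximal $e$-coordinate, the centered configuration at those record times ranges over a set of bounded diameter, hence over a \emph{finite} set of shapes. Taking the $e$-stretch large enough, two distinct record times $j < j'$ must produce the same centered shape; call it $A'$ (chosen finite and non-empty). Then the portion of the $e$-connecting sequence from step $j$ to step $j'$ witnesses that $\eta_{A'}$ is $e$-connected to $\eta_{w + A'}$ for some $w\in\zz^d$ that is a strictly positive record increment in the $e$-direction. Since all the moves in an $e$-connecting sequence shift vacancies purely in the $e$-direction, the net displacement $w$ is a nonnegative integer multiple of $e$, and being a genuine record increment it is \emph{strictly} positive, so $w = n e$ with $n \ge 1$, which is exactly the assertion.

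A couple of technical points need care. One must check that the sub-sequence from step $j$ to step $j'$ is still a legitimate $e$-connecting sequence after translating everything by the vector that identified the two centered shapes — this is immediate from translation invariance (homogeneity) of the constraint, assumption (1) of the appendix, applied to the already-established moves. One must also make sure the sequence from $j$ to $j'$ genuinely starts at $\eta_{A'}$: this is why we center the configurations and pick $A'$ to be the common centered shape. Finally, to know the increment is purely in the $e$ direction one uses that, in an $e$-connecting sequence, the multiset of vacancies only ever translates coordinate-wise along $e$, so the set of vacancies of $\zeta_{j'}$ is a translate of that of $\zeta_j$ by a vector of the form $ne$ — here $n\in\zz$ a priori, but positivity at a record time forces $n\ge 1$.

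The main obstacle I anticipate is making the pigeonhole rigorous: precisely one needs to argue that the empty cluster cannot "spread out" arbitrarily far transversally to $e$ as it advances, so that the centered shapes at record times really do live in a finite set. This is where one genuinely uses that we only need \emph{some} such pair, not control over all intermediate configurations: even if the cluster fattens, it still has exactly $|A|$ vacancies, so its centered shape is one of only finitely many sets of $|A|$ points within bounded diameter — and the diameter at a record time is controlled because between consecutive records the $e$-coordinate increases by a bounded amount while $|A|$ vacancies cannot be spread over an unbounded diameter without the $e$-extent also being unbounded, which would have produced an earlier record. Tightening this counting — and in particular choosing the notion of "record time" so that the diameter bound holds — is the delicate part; everything else is bookkeeping with translation invariance.
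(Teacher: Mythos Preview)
Your overall architecture matches the paper's: run a long $e$-connecting sequence, record the shape of the vacancy set at suitable ``record'' times, and pigeonhole on shapes to find two times at which the sets are translates of one another. Your handling of translation invariance and of why the displacement is a strictly positive multiple of $e$ is fine. The gap is exactly where you suspect it is --- the diameter bound --- but you have misdiagnosed what makes it delicate, and the argument you sketch does not work.

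First, the easy part you overlooked: there is \emph{no} transversal spread. In an $e$-connecting sequence every move sends a vacancy from $z$ to $z+e$, so the multiset of transversal coordinates $(x\cdot e_\beta)_{\beta:\,e_\beta\neq \pm e}$ is conserved. Hence the vacancy set lives forever in $\zz\times[-L,L]^{d-1}$ (after an orthogonal change of basis), where $L$ depends only on $A$.

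The genuine difficulty is the \emph{longitudinal} extent, and your argument (``unbounded $e$-extent would have produced an earlier record'') is circular. Here is a counterexample to the bare claim that the diameter is bounded at record times: suppose the model is such that a single vacancy already has infinite $e$-stretch, and take $A=\{0,-10e\}$. Then the sequence that moves only the vacancy at $0$ to the right has, at the $m$-th record time, vacancy set $\{me,-10e\}$ with $e$-extent $m+10$, which is unbounded. The claim of the lemma still holds (take $A'=\{0\}$), but your pigeonhole over full shapes fails.

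The paper's fix is a minimality reduction that you are missing. One first replaces $A$ by a set of \emph{minimal} cardinality $k$ with infinite $e$-stretch, and then proves an auxiliary statement: any set of size $<k$ has uniformly bounded $e$-stretch, say at most $s$ beyond its rightmost point (this is an induction on the size, using finite range $R$ to split off isolated sub-clusters). With this in hand, if at some record time the vacancy set had $e$-extent larger than $k(s+R)$, a pigeonhole on the $k$ points would produce a gap of width $>s+R$ in the $e$-direction; the sub-cluster to the right of the gap then has fewer than $k$ vacancies and can never again be influenced by the left sub-cluster, so the remaining stretch is at most $s$ --- contradicting the fact that $a_T$ was chosen much larger than the current $a_t$. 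Thus at enough record times (more than the number of $k$-subsets of a box of side $k(s+R)$ in the $e$-direction and $2L+1$ transversally) the vacancy set fits in a box of fixed size, and your pigeonhole goes through. Everything else in your outline is correct once you insert this reduction.
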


\begin{proof}
First, we may assume without loss of generality that $A$ has the
minimal possible size, among sets for which the $e$-stretch of $\eta_{A}$
is infinite; and for notational convenience we also assume $e=e_{1}$.
Set $k=\left|A\right|$, and fix $L$ such that $A\subset\Lambda_{L}$.

We will start by showing the following property:
\begin{claim}
For all $j<k$, there exists $s^{(j)}$ such that for all $B\subset(-\infty,0]\times\zz^{d-1}$
with $\left|B\right|=j$, the $e_{1}$-stretch of $\eta_{B}$ is at
most $s^{(j)}$. In particular, there exists $L^{(j)}$ such that
the maximal possible $e_{1}$-stretch for such a set is obtained for
some \textbf{$B\subset[-L^{(j)},0]\times\zz^{d-1}$}.
\end{claim}

\begin{proof}
For $j=1$ choosing $s^{(1)}=L^{(1)}=0$ suffices since no particle
could move. For $j>1$, let $L^{(j)}=j(h^{(j-1)}+R)$ and $s^{(j)}$
the maximal $e_{1}$-stretch of $\eta_{B}$ for any \textbf{$B\subset[-L^{(j)},0]\times\zz^{d-1}$}.
Note that $s^{(j)}$ is well defined since particles cannot move in
directions orthogonal to $e_{1}$, so we may assume without loss of
generality that $B\subset[-L^{(j)},0]\times[-jR,jR]^{d-1}$; and it
is finite since $j<k$.

Assume now that for some $B\subset(-\infty,0]\times\zz^{d-1}$ of
size $j$ the $e_{1}$-stretch of $\eta_{B}$ is more than $s^{(j)}$.
We can assume without loss of generality that $0\in B$, and by construction
there must be a site $x\in B$ outside $[-L^{(j)},0]\times\zz^{d-1}$.
Due to our choice of $L^{(j)}$, the set $B$ could be separated by
a strip of width $h^{(j-1)}+R$, namely, there exists $n\in\zz$ such
that 
\begin{align*}
B & =B_{-}\cup B_{+},\\
B_{-} & \subset(-\infty,n]\times\zz^{d},\\
B_{+} & \subset(n+h^{(j-1)}+R,0]\times\zz^{d}.
\end{align*}
However, since the $e_{1}$-stretch of $\eta_{B_{-}}$ is at most
$h^{(j-1)}$, it would never be able to influence transitions to the
right of $n+h^{(j-1)}+R$, thus the $e_{1}$-stretch of $B$ cannot
be larger than that of $B_{-}$, which is a contradiction.
\end{proof}
As a result of this claim, there exists $s<\infty$, such that for
any set $B$ of size strictly less than $k$, the $e_{1}$-stretch
of $B$ is at most $s$ plus its maximal $e_{1}$ coordinate.

Since the $e_{1}$-stretch of $\eta_{A}$ is infinite, there exists
an $e_{1}$-reachable site $x$ with $e\cdot x>\binom{(2L+1)^{d-1}k(s+R)}{k}+s+1$.
Consider a sequence of \textbf{$T$} flips which empties that site.
We denote the set of empty sites at step $t$ by $A_{t}$, so that
$A_{0}=A$ and $A_{T}\ni x$; and $a_{t}$ denotes the rightmost coordinate
of $A_{t}$ (i.e., $a_{t}=\max_{y\in A_{t}}\{e_{1}\cdot y\}$). Assume
now that at some time $t$ we are able to identify a non-empty set
$\tilde{A}_{t}$ whose rightmost coordinate is $\tilde{a}_{t}$, such
that all sites of $A_{t}\setminus\tilde{A}_{t}$ are at least $s+R$
to the right of $\tilde{a}_{t}$, i.e., $a_{t}<e_{1}\cdot y-s-R$
for all $y\in A_{t}\setminus\tilde{A}_{t}$. We then know that the
$0$'s coming from $\tilde{A}_{t}$ will never be able to reach distance
$R$ from the sites of $A_{t}\setminus\tilde{A}_{t}$, thus the set
$A_{t}\setminus\tilde{A}_{t}$ moves as if these sites were filled.
In particular, it could not go further than distance $s$, hence $a_{t}>\binom{(2L+1)^{d-1}k(h+s)}{k}+1$.
That means that for at least $\binom{(2L+1)^{d-1}k(s+R)}{k}+1$ times
$t$ with different values of $a_{t}$, 
\[
A_{t}\subset\left[a_{t}-k(s+R),a_{t}\right]\times\left[-L,L\right]^{d-1}.
\]
This box has volume $(2L+1)^{d-1}k(s+R)$, so by the pigeonhole principle
there exist $t$ and $t'$ with $a_{t}<a_{t'}$ such that $A_{t}-a_{t}e_{1}=A_{t'}-a_{t'}e_{1}$.
This finishes the proof by taking $A'=A_{t}-a_{t}e_{1}$ and $n=a_{t'}-a_{t}$,
and using the translation invariance of the model.
\end{proof}
\begin{claim}
\label{claim:empyingtraslations}Fix any finite $B\subset\zz^{d}$
and $e\in\zz^{d}$, and assume that there exists a finite non-empty
$A\subset\zz^{d}$ such that the $e$-stretch of $\eta_{A}$ is infinite.
Then there exist a finite non-empty set $A'\subset\zz^{d}$ such that
for all $m\in\nn$, the configuration $\eta_{A'}$ is $e$-connected
to a configuration $\eta_{m}$ in which all the sites of $me+B$ are
empty. Moreover, we can assume that no site after $me+B$ is empty,
i.e., $\eta_{m}(x)=1$ whenever $x\cdot e>m+\sup_{y\in B}y\cdot e$.
\end{claim}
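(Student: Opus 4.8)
The plan is to deduce the statement from Claim \ref{claim:infinitestretch_movingcluster}. Assume without loss of generality $e=e_1$. Claim \ref{claim:infinitestretch_movingcluster} gives a finite non-empty $A_0\subset\zz^d$ and an integer $n>0$ with $\eta_{A_0}$ $e_1$-connected to $\eta_{ne_1+A_0}$; concatenating this move sequence with its $ne_1$-translates and using translation invariance, $\eta_{A_0}$ is $e_1$-connected to $\eta_{jne_1+A_0}$ for every $j\ge 0$, so the cluster $A_0$ can be marched arbitrarily far in the $e_1$-direction in steps of $n$. Since emptying a larger region only makes a constraint easier to satisfy and only relaxes the requirement ``$\eta_m(x)=1$ for $x\cdot e_1>m+\sup_{y\in B}y\cdot e_1$'', I will first enlarge $B$, replacing it by a box $\widehat B$ with $\sup_{y\in\widehat B}y\cdot e_1=0$ that is large compared to $n$ and to $\operatorname{diam}A_0$, and prove the claim for $\widehat B$; the general case then follows by shifting the role of $m$. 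Two facts will be used repeatedly: monotonicity of the constraints (so a sequence of $e_1$-moves realizing a rearrangement stays valid after adding extra vacancies at sites it never visits, and in particular well-separated copies of $A_0$ march independently); and the observation that an $e_1$-move preserves every coordinate orthogonal to $e_1$, so the multiset of orthogonal projections of the vacancies is invariant along any $e_1$-connected path — hence $A'$ must already carry, on each orthogonal line meeting $\widehat B$, at least as many vacancies as $\widehat B$ does there.

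Accordingly I take $A'$ to be a union of finitely many translated copies of $A_0$, $A'=\bigcup_{r=1}^{M}(v_r+A_0)$, with the $v_r$ chosen so that: their $e_1$-components are all very negative, pairwise distinct, and separated by more than the diameter of a single march; their orthogonal components distribute the orthogonal footprint of $A'$ so that, with multiplicity, it dominates that of $\widehat B$ with room to spare; and the $e_1$-components realize every residue modulo $n$ often enough (needed below). Since $\widehat B$ is fixed, this is a fixed choice of $A'$, independent of $m$. Given $m\in\nn$, for each orthogonal column of $\widehat B$ I march the copies assigned to it one at a time, in decreasing order of their intended final $e_1$-position, each time advancing the current copy in steps of $n$ from its fixed position to a place where its column-slice helps cover the window $[m-s,m]$ of that column while staying weakly to the left of $m$. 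Marching in decreasing order of final position ensures every march sweeps only over sites lying in the $e_1$-direction strictly left of all already-parked copies, and — by the decreasing arrangement of the starting positions — right of all copies not yet moved; so no march ever disturbs another copy, and by monotonicity each of its moves is allowed. The configuration $\eta_m$ reached this way is $e_1$-connected to $\eta_{A'}$, has $me_1+\widehat B$ (hence $me_1+B$) empty, and no empty site with $e_1$-coordinate exceeding $m$.

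The part requiring genuine care — and the main obstacle — is the combinatorics of that last step: choosing $M$, the offsets $v_r$, and the final positions so that the rigid, only $e_1$-translatable clumps contributed by the copies exactly cover each required window $[m-s,m]$ of each column, overhang only to the left, and remain pairwise disjoint. This is a packing problem that the enlargement of $B$ (making $s\ge n$ and $s\gg\operatorname{diam}A_0$, so every target window is reachable and no clump overhangs to the right) together with the generous multiplicity and orthogonal room built into $A'$ are designed to resolve; one must verify, column by column, that enough copies with the right residues modulo $n$ can be interleaved to tile the window even when the column-slice of $A_0$ is not itself an interval of consecutive sites. Should this bookkeeping prove awkward, an alternative is an induction on $|B|$, using one march of $A_0$ per step to enlarge the emptied region by a single site while tracking the trailing vacancies, but I expect the direct construction above to be shorter to write out.
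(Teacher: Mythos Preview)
Your high-level plan matches the paper's: invoke Claim~\ref{claim:infinitestretch_movingcluster}, take $A'$ to be a disjoint union of well-separated translates of the moving cluster, and march the copies one at a time, relying on monotonicity for non-interference. The divergence is in how you actually cover $me+B$. You enlarge $B$ to a box and treat the covering as a packing problem for rigid $e_1$-slidable copies of $A_0$; you correctly flag this as the main obstacle and leave it unresolved.

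The paper bypasses the packing entirely via a normalization you do not make. Along the $e$-path from $\eta_{A''}$ to $\eta_{ne+A''}$ there is a step at which the rightmost vacancy has just advanced and is therefore unique; taking that intermediate configuration (suitably translated) as the new $A''$, one may assume $0\in A''$ while $e\cdot x<0$ for every other $x\in A''$. With this normalization a single copy, marched so that its tip lands on a prescribed $b_i\in B$, empties $b_i$ and produces no vacancy to the right of $b_i$. Hence one copy per site of $B=\{b_1,\dots,b_k\}$, started at staggered depths $b_i-inLe$ and marched in the order $b_1,b_2,\dots$ (with $e\cdot b_1\ge\dots\ge e\cdot b_k$), already empties all of $B$, and the ``no site after $me+B$'' clause is automatic. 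To reach every $m$ rather than only multiples of $n$, the paper takes the union of $n$ suitably shifted copies of this whole arrangement and checks that the full union can itself be marched by $ne$.

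This one-copy-per-site construction is essentially the non-inductive version of the alternative you mention in your last sentence; the normalization of $A''$ is the missing observation that makes it clean and dissolves your packing difficulty.
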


\begin{proof}
By the Claim \ref{claim:infinitestretch_movingcluster} there exists
$L\in\nn$, $A''\subset\Lambda_{L}$, and $n\in\nn$, such that $\eta_{A''}$
is $e$-connected to $\eta_{ne+A''}$. Note that we may, equivalently,
choose any $A''$ which is a translation of $A_{\eta}$ for any $\eta$
in the path connecting $\eta_{A''}$ with $\eta_{ne+A''}$. We will
therefore assume without loss of generality that $0\in A''$, but
$e\cdot x<0$ for all $x\in A\setminus\{0\}$.

Denote $B=\{b_{1},\dots,b_{k}\}$, with $e\cdot b_{1}\ge\dots\ge e\cdot b_{k}$,
and consider the union 
\[
A_{0}=\bigcup_{i=1}^{k}\left(b_{i}+A''-inLe\right).
\]
This union is disjoint, since $A''\subset\Lambda_{L}$, and by repeating
$L$ times the sequence of flips required to move $A''$ to $ne+A''$,
we can move $b_{1}+A''-nLe$ to $b_{1}+A''$, reaching a configuration
in which $b_{1}$ is empty. Then, repeating this sequence again $2L$
times we can move $b_{2}+A''-2nLe$ to $b_{2}+A''$. This is allowed
since during the first sequence we do not changes the configuration
at the sites of $b_{2}+A''-2nLe$; and the in the resulting configuration
both $b_{1}$ and $b_{2}$ are empty. We continue in the same manner,
until we reach a configuration $\eta_{0}'$ in which the sites of
$B$ are all empty.

Consider now for $j=0,\dots,n-1$ the set 
\[
A_{j}=A_{0}-knLje+je.
\]
As before, applying repeatedly the sequence that allowed us to move
$A''$ we can reach a configuration $\eta_{j}$ (connected to $\eta_{A_{j}}'$)
in which the sites of $je+B$ are empty. Furthermore, $A_{j}$ and
$A_{j'}$ are disjoint for $j\neq j'$, so, indeed, taking 
\[
A'=\bigcup_{j=0}^{n-1}A_{j},
\]
for $j=0,\dots,n-1$, the configuration $\eta_{A'}$ is $e$-connected
to a configuration $\eta_{j}$ for which the sites of $je+B$ are
empty. Finally, since $A'$ is a disjoint union of copies of $A''$,
we can translate each of them by $ne$, and if we do that in the right
order (starting with $b_{1}+A''-nLe$ and ending with $b_{k}+A''-knL(n-1)e+(n-1e)$)
they will never intersect. Hence $\eta_{ne+A'}$ is $e$-connected
to $\eta_{A'}$, and the result follows.
\end{proof}
\begin{claim}
\label{claim:flippingedgetotheright}Fix $e\in\left\{ \pm e_{1},\dots,\pm e_{d}\right\} $
and $L\in\nn$. Assume that there exists a finite non-empty $A\subset\zz^{d}$
such that the $e$-stretch of $\eta_{A}$ is infinite. Then there
exists $L'$ and $A'\subset\Lambda_{L'}$ such that for all $x\in\left[L',\infty\right]\times\left[-L,L\right]^{d-1}$
and every configuration $\eta$ for which the sites of $A'$ are empty,
$\eta$ is connected to $\eta^{x,x+e}$.
\end{claim}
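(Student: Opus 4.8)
The plan is to reduce the statement to Claim~\ref{claim:empyingtraslations} by a single auxiliary emptying step followed by one genuine exchange. Write $e=e_{1}$; this is only a choice of coordinates, so that the set of sites in the statement is $[L',\infty)\times[-L,L]^{d-1}$, which I call the slab. First I would fix a seed configuration for the constraint: by non-degeneracy there is a configuration with $c_{0,e_{1}}\ge1$, and since the constraint has range $R$ and is independent of the occupation at $0$ and $e_{1}$, we may take such an $\eta_{0}$ filled outside $\Lambda_{R}$ with $\eta_{0}(0)=0$ and $\eta_{0}(e_{1})=1$. Put $D=A_{\eta_{0}}\setminus\{0\}$, so $D\subseteq\Lambda_{R}$ and $0,e_{1}\notin D$. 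Next let $B^{*}=[-R,R]\times[-L-R,L+R]^{d-1}$, chosen large enough that $x+D\subseteq(x\cdot e_{1})e_{1}+B^{*}$ for every $x$ in the slab, and apply Claim~\ref{claim:empyingtraslations} with this $B^{*}$ and $e=e_{1}$ (its hypothesis is the one we are given): this yields a finite non-empty $A'\subseteq\Lambda_{L_{0}}$ such that for every $m\in\nn$ the configuration $\eta_{A'}$ is $e_{1}$-connected to some $\xi_{m}$ in which $me_{1}+B^{*}$ is empty and $\xi_{m}(y)=1$ whenever $y\cdot e_{1}>m+R$. Setting $L'=L_{0}+2R+2$ gives $A'\subseteq\Lambda_{L'}$, and the sets $A'$, $x+\Lambda_{R}$ and $\{x,x+e_{1}\}$ are pairwise disjoint for every $x$ in the slab.

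The reduction is that it suffices to prove $(\star)$: for every configuration $\zeta$ in which $A'$ is empty and every $x$ in the slab, $\zeta$ is connected to the configuration $\zeta_{F}$ obtained from $\zeta$ by emptying $F:=x+D$. Granting $(\star)$, fix such $\zeta$ and $x$; if $\zeta(x)=\zeta(x+e_{1})$ there is nothing to show, and since connectedness is symmetric and $\zeta^{x,x+e_{1}}$ again has $A'$ empty we may assume $\zeta(x)=0$ and $\zeta(x+e_{1})=1$. Then $\zeta_{F}$ is empty on $\{x\}\cup F=x+A_{\eta_{0}}$, hence $\tau_{x}\zeta_{F}\le\eta_{0}$ on $\Lambda_{R}\setminus\{0,e_{1}\}$, and because the constraint is translation invariant and decreasing in the configuration, $c_{x,x+e_{1}}(\zeta_{F})=c_{0,e_{1}}(\tau_{x}\zeta_{F})\ge c_{0,e_{1}}(\eta_{0})\ge1$; so the exchange $\zeta_{F}\to\zeta_{F}^{x,x+e_{1}}$ is legal. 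As $x,x+e_{1}\notin F$, this exchange commutes with emptying $F$, so $\zeta_{F}^{x,x+e_{1}}=(\zeta^{x,x+e_{1}})_{F}$, and $(\star)$ applied to $\zeta^{x,x+e_{1}}$ connects it to $(\zeta^{x,x+e_{1}})_{F}$. Concatenating
\[
\zeta\ \longleftrightarrow\ \zeta_{F}\ \longrightarrow\ \zeta_{F}^{x,x+e_{1}}=(\zeta^{x,x+e_{1}})_{F}\ \longleftrightarrow\ \zeta^{x,x+e_{1}}
\]
proves the claim.

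For $(\star)$ I would revisit the construction behind Claim~\ref{claim:empyingtraslations}: there $A'$ is a disjoint union of translated copies of a fixed finite set $A''$ which, by Claim~\ref{claim:infinitestretch_movingcluster}, can be slid by $ne_{1}$, and the path provided slides these copies rightwards along disjoint corridors until they cover $me_{1}+B^{*}$. Re-running that construction starting from $\zeta$ (with $m=x\cdot e_{1}$) rather than from $\eta_{A'}$ keeps every step legal: each elementary exchange of a slide swaps a vacant copy-site with its neighbour in direction $e_{1}$, and the governing constraint is decreasing in the configuration, so the copy's own vacancies already make it legal whatever particles of $\zeta$ lie ahead of the copy (these are merely pushed back), while extra vacancies of $\zeta$ only help. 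This reaches a configuration empty on $me_{1}+B^{*}\supseteq F$ and equal to $\zeta$ away from the corridors; it then remains to slide the copies back to their home positions in $A'$ while parking $|D|$ of their vacancies on $F$. That reverse sweep is again legal, since reversing a legal sequence of exchanges yields a legal sequence (the constraint of an exchange does not depend on the exchanged edge), and it restores $\zeta$ on every site outside $F$ thanks to the disjointness of the corridors and to the property that no site of $\xi_{m}$ with $e_{1}$-coordinate exceeding $m+R$ is empty. I expect this last point — faithfully re-running the construction of Claim~\ref{claim:empyingtraslations} in the presence of an arbitrary ambient configuration and bringing it back to exactly $\zeta_{F}$ off $F$ — to be the main obstacle; the reduction itself and the bookkeeping around $\eta_{0}$ are routine.
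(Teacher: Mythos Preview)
Your reduction to $(\star)$ is clean, but $(\star)$ itself is false: connectedness is by particle exchanges, so it preserves the total number of particles, while $\zeta_{F}$ (obtained from $\zeta$ by \emph{emptying} $F=x+D$) has strictly fewer particles than $\zeta$ whenever some site of $F$ was occupied. Thus $\zeta$ and $\zeta_{F}$ cannot be connected in general, and no amount of ``parking vacancies on $F$'' during the reverse sweep can fix this --- if you reverse the legal sequence you ran forward you land exactly on $\zeta$, and if you modify the reverse sweep to leave vacancies on $F$, the returning copies are short of vacancies and the remaining reverse steps need not be legal. This is not a technical obstacle but a genuine obstruction.

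The paper avoids this by never aiming at $\zeta_{F}$. It replays on $\zeta$ the \emph{same} exchange sequence that $e$-connects $\eta_{A'}$ to $\xi_{m}$; monotonicity of the constraint (and the elementary fact that exchanges preserve coordinatewise domination) keeps every step legal. This reaches some $\tau\le\xi_{m}$ with $me_{1}+B$ empty, so $c_{x,x+e_{1}}(\tau)\ge1$; one performs the exchange and then reverses the sequence verbatim to obtain $\zeta^{x,x+e_{1}}$. The crucial point --- which is exactly the idea you are missing --- is that for this fold-back to yield $\zeta^{x,x+e_{1}}$ (and not just $\zeta$ conjugated by some permutation), the forward sequence must never touch $x$ or $x+e_{1}$. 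The paper secures this not from non-degeneracy but from the infinite $e$-stretch hypothesis: since the rightmost vacancy of $\eta_{A}$ must at some moment move one step to the right, there is a seed $B_{0}\subset(-\infty,0]\times\zz^{d-1}\setminus\{0\}$ with $c_{0,e_{1}}(\eta_{B_{0}})\ge1$. Taking $B=\bigcup_{z\in\{0\}\times[-L,L]^{d-1}}(z+B_{0})$ keeps $me_{1}+B$ entirely in $\{y:y\cdot e_{1}\le m\}$, so (with the ``moreover'' clause of Claim~\ref{claim:empyingtraslations}) the $e$-connected sequence never crosses the hyperplane $\{y\cdot e_{1}=m\}$ and in particular never touches $x+e_{1}$. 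Your generic seed $D\subset\Lambda_{R}$, and hence your $B^{*}=[-R,R]\times[-L-R,L+R]^{d-1}$, contains both $x$ and $x+e_{1}$ in $me_{1}+B^{*}$, so even after repairing $(\star)$ to the correct statement the fold-back would not produce $\zeta^{x,x+e_{1}}$.
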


\begin{proof}
We assume without loss of generality that $e=e_{1}$. The first observation
needed in order to prove this claim, is that there is a configuration
for which the constraint $c_{x,x+e_{1}}$ is satisfied, but none of
the sites to the right of $x$ are empty, i.e., $x+\left[1,\infty\right]\times\zz^{d-1}$
is entirely occupied. This is true since, if the $e_{1}$-stretch
of $\eta_{A}$ is infinite for finite $A$, at some point the rightmost
$0$ has to move to the right.

We then find a finite non-empty $B_{0}\subset\left[-\infty,0\right]\times\zz^{d-1}\setminus\{0\}$
such that $c_{0,e_{1}}(\eta_{B_{0}})=1$. Let 
\[
B=\bigcup_{z\in\{0\}\times\left[-L,L\right]^{d-1}}\left(z+B_{0}\right).
\]
Then, in particular, $c_{x,x+e_{1}}(\eta_{B})=0$ for $x\in\{0\}\times\left[-L,L\right]^{d-1}$.

We now apply Claim \ref{claim:empyingtraslations} to find a finite
non-empty set $A'\subset\zz^{d}$ such that for all $m\in\nn$, the
configuration $\eta_{A'}$ is $e$-connected to a configuration $\eta_{m}$
in which all the sites of $me+B$ are empty. We define $L'$ such
that $A'\subset\Lambda_{L'}$, and then, for every $x\in\left[L',\infty\right]\times\left[-L,L\right]^{d-1}$,
taking $m=e_{1}\cdot x$ yields $c_{x,x+e_{1}}(\eta_{m})=1$. Therefore,
if we take any configuration $\eta$ for which $A'$ is empty, by
performing the same transitions that connected $\eta_{A'}$ to $\eta_{m}$,
we reach a configuration for which $c_{x,x+e_{1}}=1$. Note that this
is done without changing the configuration neither at $x$ nor at
$x+e_{1}$. We then exchange $x$ and $x+e_{1}$, and fold back all
the transitions we have done before, reaching the configuration $\eta^{x,x+e_{1}}$.
\end{proof}
\begin{claim}
\label{claim:russiandoll}Assume that for all $e\in\{e_{1},\dots,e_{d}\}$
there exists a finite set $A_{e}\subset\zz^{d}$ such that the $e$-stretch
of $\eta_{A_{e}}$ is infinite, and fix $e'\in\{e_{1},\dots,e_{d}\}$.
Then there exists $L\in\nn$ and $A\subset\Lambda_{L}$ such that
for any $\eta$ in which the sites of $A$ are empty, and any $x\in\left[L+1,\infty\right]^{d}$,
the configuration $\eta^{x,x+e'}$ is connected to $\eta$.
\end{claim}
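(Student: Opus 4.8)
The plan is to argue by induction on the number of coordinate directions in which the point $x$ is allowed to escape to infinity. Fix an enumeration $v_{1},\dots,v_{d}$ of $e_{1},\dots,e_{d}$ with $v_{1}=e'$. For $1\le j\le d$ let $P_{j}$ be the assertion: for every $M\in\nn$ there exist $L\in\nn$ and a finite non-empty $A\subseteq\Lambda_{L}$ such that, whenever all sites of $A$ are empty in $\eta$ and $x$ satisfies $v_{i}\cdot x\ge L+1$ for $i\le j$ and $|v_{i}\cdot x|\le M$ for $i>j$, the configuration $\eta$ is connected to $\eta^{x,x+e'}$. Since $v_{1},\dots,v_{d}$ is an enumeration of all the axes, $P_{d}$ (in which the condition ``$|v_{i}\cdot x|\le M$ for $i>j$'' is vacuous, so $M$ is irrelevant) is exactly the claim; hence it suffices to prove $P_{1}$ and the implication $P_{j}\Rightarrow P_{j+1}$.

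The base case $P_{1}$ is Claim~\ref{claim:flippingedgetotheright} applied with $e=e'$ and with its parameter $L$ set equal to $M$: the set $A'$ it produces serves as $A$, and $L$ is the resulting $L'$. For the inductive step, assume $P_{j}$ with $j\ge1$, fix $M$, and apply $P_{j}$ with parameter $M$ to obtain $L_{j}$ and $A_{j}\subseteq\Lambda_{L_{j}}$. Since by hypothesis the $v_{j+1}$-stretch of $\eta_{A_{v_{j+1}}}$ is infinite, Claim~\ref{claim:empyingtraslations} with $e=v_{j+1}$ and $B=A_{j}$ provides a finite non-empty set $A'$ together with, for each $m\in\nn$, a sequence of legal exchanges --- all of which move a vacancy one step in the $v_{j+1}$ direction --- that, applied to $\eta_{A'}$, empties $mv_{j+1}+A_{j}$; call this sequence the \emph{$m$-transport}. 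Since each exchange of the $m$-transport leaves unchanged the coordinates of the moved vacancy in every direction other than $v_{j+1}$, every site it touches has all of its coordinates other than the $v_{j+1}$-one of absolute value at most $\max(\operatorname{rad}A',L_{j})$, where $\operatorname{rad}$ denotes the $\ell^{\infty}$-radius. Now choose $L=L_{j+1}$ so large that $A'\subseteq\Lambda_{L}$, that $L_{j}\le L$, and that $\max(\operatorname{rad}A',L_{j})+R<L+1$, where $R$ is the range of the constraint, and set $A:=A'$.

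Let $\eta$ have $A$ empty and let $x$ satisfy $v_{i}\cdot x\ge L+1$ for $i\le j+1$ and $|v_{i}\cdot x|\le M$ for $i>j+1$; put $m=v_{j+1}\cdot x\in\nn$. First replay the $m$-transport inside $\eta$: every move stays legal because the constraints are decreasing in the configuration, and a short induction shows that the configuration $\xi$ reached contains all the vacancies of the target configuration produced from $\eta_{A'}$, so in particular $mv_{j+1}+A_{j}$ is empty in $\xi$; thus $\eta$ is connected to $\xi$. By the choice of $L$, and since $j\ge1$ so $v_{1}\ne v_{j+1}$ and $v_{1}\cdot x\ge L+1>\max(\operatorname{rad}A',L_{j})+R$, the sites $x$ and $x+e'$ lie at $\ell^{\infty}$-distance greater than $R$ from every site touched by the $m$-transport; in particular $\xi$ and $\eta$ agree on $\{x,x+e'\}$. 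Translating $P_{j}$ by $mv_{j+1}$ --- legitimate since the model is homogeneous --- and noting that $x-mv_{j+1}$ meets the hypotheses of $P_{j}$ with parameter $M$ (one has $v_{i}\cdot(x-mv_{j+1})=v_{i}\cdot x$ for $i\ne j+1$, $v_{j+1}\cdot(x-mv_{j+1})=0$, and $L_{j}\le L$), we conclude that $\xi$ is connected to $\xi^{x,x+e'}$. Finally run the $m$-transport backwards starting from $\xi^{x,x+e'}$: reversing a legal exchange is again legal because the constraint on an edge depends only on the occupation away from that edge, and because the transport never comes within $\ell^{\infty}$-distance $R$ of $\{x,x+e'\}$ the exchange performed there affects none of the constraints along the reversed transport; hence this path reaches $\eta^{x,x+e'}$. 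Concatenating the three pieces shows $\eta$ is connected to $\eta^{x,x+e'}$, which is $P_{j+1}$.

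The one genuinely delicate point is the geometric bookkeeping in the inductive step: the transported copy $mv_{j+1}+A_{j}$ of the gadget, and the whole $m$-transport, must stay at distance more than $R$ from the edge $(x,x+e')$, so that the transport can be undone after the exchange without perturbing it; this is exactly what the choice of $L$ secures, using that the $m$-transport moves vacancies only along $v_{j+1}$ and therefore never spreads in the remaining $d-1$ directions. Everything else is a repetition of the monotonicity-and-finite-range manipulations already carried out in Claims~\ref{claim:empyingtraslations} and~\ref{claim:flippingedgetotheright}.
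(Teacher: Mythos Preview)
Your proof is correct and follows essentially the same strategy as the paper: an induction on the number of ``free'' coordinate directions, with the base case supplied by Claim~\ref{claim:flippingedgetotheright} and the inductive step built from Claim~\ref{claim:empyingtraslations}, transporting the previous-level gadget along the new direction $v_{j+1}$, performing the exchange, and rewinding. Your presentation is in fact somewhat more explicit than the paper's about the key geometric point---that the $v_{j+1}$-transport only alters the $v_{j+1}$-coordinate of vacancies and therefore stays at $\ell^{\infty}$-distance greater than $R$ from $\{x,x+e'\}$ in the $v_{1}$-direction, which is exactly what makes the rewinding legal after the exchange.
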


\begin{proof}
Without loss of generality we fix $e=e_{1}$. By Claim \ref{claim:flippingedgetotheright}
we can define $L_{1}\in\nn$ and $A_{1}\subset\Lambda_{L_{1}}$ be
such that for all $x_{1}\in\left[L_{1},\infty\right]\times\{0\}^{d-1}$
and every configuration $\eta$ for which the sites of $A_{1}$ are
empty, $\eta$ is connected to $\eta^{x_{1},x_{1}+e_{1}}$. Using
Claim \ref{claim:empyingtraslations} we can find $L_{2}\in\nn$ and
$A_{2}\in\Lambda_{L_{2}}$ such that, for every $x_{2}\in\{0\}\times\left[L_{2},\infty\right]\times\{0\}^{d-1}$,
the configuration $\eta_{A_{2}}$ is connected to a configuration
$\eta$ in which the sites of $x_{2}+A_{1}$ are empty, and during
the sequence of configurations connecting the two only edges of $\left[-\infty,-L_{2}\right]^{d}$
were flipped. We continue in the same manner, for $i=1,\dots,d$,
to construct $L_{i}$ and $A_{i}\subset\Lambda_{L_{i}}$ such that
for all $x_{i}\in\{0\}^{i-1}\times\left[L_{i},\infty\right]\times\{0\}^{d-i}$,
the configuration $\eta_{A_{i}}$ is connected to a configuration
in which the sites of $x_{i}+A_{i-1}$ are empty, and during the sequence
of configurations connecting the two only edges of $\left[-\infty,-L_{i}\right]^{d}$
were flipped.

Let $L=L_{d}$, $A=A_{d}$, and fix $\eta$ in which the sites of
$A$ are empty and $x\in\left[L+1,\infty\right]^{d}$. We write $x=x_{1}+\dots+x_{d}$
for $x_{i}\in\{0\}^{i-1}\times\left[L_{i},\infty\right]\times\{0\}^{d-i}$.
By our construction of $A$, $\eta$ is connected to a configuration
$\eta'$ in which the set $A_{1}+x_{2}+\dots+x_{d}$ is empty, and
during the sequence of configurations connecting the two the sites
$x$ and $x+e_{1}$ remained untouched. Then, by the construction
of $A_{1}$, we can connect $\eta'$ to $\eta'^{x,x+e_{1}}$. All
that is left is to rewind the steps leading to $\eta'$, and the proof
is complete.
\end{proof}
\begin{claim}
\label{claim:flipping_edges}Assume that for all $e\in\{e_{1},\dots,e_{d}\}$
there exists a finite set $A_{e}\subset\zz^{d}$ such that the $e$-stretch
of $\eta_{A_{e}}$ is infinite. Then there exists $L\in\nn$ and $A\subset\Lambda_{L}$
such that for any $\eta$ in which the sites of $A$ are empty, any
$x\in\left[L+1,\infty\right]^{d}$, and any $e'\in\{e_{1},\dots,e_{d}\}$,
the configuration $\eta^{x,x+e'}$ is connected to $\eta$.
\end{claim}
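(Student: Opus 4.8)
The plan is to reduce this claim to Claim~\ref{claim:russiandoll}, which already supplies, for each \emph{individual} direction $e'$, a finite empty set inside a box that enables the desired exchange; one then merges over the $d$ coordinate directions.

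Concretely, for each $j\in\{1,\dots,d\}$ I would apply Claim~\ref{claim:russiandoll} with $e'=e_j$ — its hypothesis is exactly the assumption here, that $\eta_{A_e}$ has infinite $e$-stretch for every $e\in\{e_1,\dots,e_d\}$ — to obtain an integer $L_j\in\nn$ and a finite set $A^{(j)}\subset\Lambda_{L_j}$ such that, for every configuration $\eta$ in which the sites of $A^{(j)}$ are empty and every $x\in\left[L_j+1,\infty\right]^{d}$, the configuration $\eta^{x,x+e_j}$ is connected to $\eta$. I then set $L=\max_{1\le j\le d}L_j$ and $A=\bigcup_{j=1}^{d}A^{(j)}$, so that $A\subset\Lambda_{L}$.

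It remains to observe that this single $A$ works for all directions simultaneously. Fix $\eta$ with the sites of $A$ empty, a direction $e'=e_j$, and $x\in\left[L+1,\infty\right]^{d}$. Since $A^{(j)}\subseteq A$, the sites of $A^{(j)}$ are empty in $\eta$; and since $L\ge L_j$ we have $x\in\left[L_j+1,\infty\right]^{d}$. Hence Claim~\ref{claim:russiandoll}, applied with the set $A^{(j)}$ and the direction $e_j$, already yields that $\eta^{x,x+e'}$ is connected to $\eta$. The one thing worth noting is that the sequence of exchanges realizing this connection may temporarily alter the occupations of sites lying in the other pieces $A^{(j')}$, $j'\neq j$; this is harmless, since the claim asks only for the existence of a path of configurations, not for one along which $A$ remains empty (and at the two endpoints $A$ is empty in any case, as $x,x+e_j\notin\Lambda_L$).

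I do not expect any genuine obstacle in this step: all of the combinatorial content has been carried out in the nested (``Russian doll'') construction behind Claim~\ref{claim:russiandoll}, and the present statement only records that a finite union of the boxed sets produced there is again boxed, and that emptying the union forces each of its pieces to be empty.
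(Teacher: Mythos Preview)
Your proof is correct and follows essentially the same approach as the paper: apply Claim~\ref{claim:russiandoll} once for each coordinate direction, then take $L=\max_j L_j$ and $A=\bigcup_j A^{(j)}$. Your additional remark that the connecting path may temporarily disturb the other pieces $A^{(j')}$ is a valid clarification, though the paper does not bother to state it.
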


\begin{proof}
The only difference between this claim and Claim \ref{claim:russiandoll}
is that now $e'$ is chosen after $A$ is fixed. In order to achieve
that, we apply Claim \ref{claim:russiandoll} $d$ times, with $e'=e_{i}$
for all $i\in\{1,\dots,d\}$, obtaining $d$ numbers $L_{1},\dots,L_{d}\in\nn$
and $d$ sets $A_{1}\in\Lambda_{L_{1}},\dots,A_{d}\in\Lambda_{L_{d}}$.
Taking $L=\max_{i}L_{i}$ and $A=\cup_{i=1}^{d}A_{i}$ will suffice
-- fix $\eta$ in which the sites of $A$ are empty, every $x\in\left[L+1,\infty\right]^{d}$
and $i\in\{1,\dots,d\}$. In particular $x\in\left[L_{i}+1,\infty\right]^{d}$,
and that the sites of $A_{i}$ are empty in $\eta$, so by construction
of $A_{i}$ we know that $\eta^{x,x+e_{i}}$ is connected to $\eta$.
\end{proof}
We are now ready to prove Proposition \ref{prop:infinitereachable_noncooperative}.
\begin{proof}[Proof of Proposition \ref{prop:infinitereachable_noncooperative}]
We assume that for all $e\in\{\pm e_{1},\dots,\pm e_{d}\}$ there
exists a finite set $A_{e}\subset\zz^{d}$ such that the $e$-stretch
of $\eta_{A_{e}}$ is infinite, and construct a mobile cluster $A$.

First, use Claim \ref{claim:flipping_edges} in order to find $L_{+}\in\nn$
and $A_{+}\subset\Lambda_{L_{+}}$ such that for any $\eta$ in which
the sites of $A_{+}$ are empty, any $x\in\left[L_{+}+1,\infty\right]^{d}$,
and any $e\in\{e_{1},\dots,e_{d}\}$, the configuration $\eta^{x,x+e}$
is connected to $\eta$. Similarly (by flipping $\zz^{d}$), we can
find $L_{-}\in\nn$ and $A_{-}\subset\Lambda_{L_{-}}$ such that for
any $\eta$ in which the sites of $A_{-}$ are empty, any $x\in\left[-\infty,-L_{-}-1\right]^{d}$,
and any $e\in\{-e_{1},\dots,-e_{d}\}$, the configuration $\eta^{x,x+e}$
is connected to $\eta$. It will be more convenient to consider translations
of these sets, 
\begin{align*}
A'_{+} & =A_{+}-(L_{+}+2)e_{1}-\dots-(L_{+}+2)e_{d},\\
A'_{-} & =A_{-}+(L_{-}+2)e_{1}+\dots+(L_{-}+2)e_{d}.
\end{align*}
This way, for any $\eta$ in which the sites of $A_{+}'$ are empty,
any $x\in\left[2,\infty\right]^{d}$, and any $e\in\{\pm e_{1},\dots,\pm e_{d}\}$,
the configuration $\eta^{x,x+e}$ is connected to $\eta$; and for
any $\eta$ in which the sites of $A_{-}'$ are empty, any $x\in\left[-\infty,-2\right]^{d}$,
and any $e\in\{\pm e_{1},\dots,\pm e_{d}\}$, the configuration $\eta^{x,x+e}$
is connected to $\eta$. Let 
\[
A=A'_{+}\cup A_{-}'
\]
We will show that it is a mobile cluster. Since already $A_{+}'$
allows us to flip edges is its vicinity, we only need to show that
$\eta_{A}$ is connected to $\eta_{e+A}$ for all $e\in\{\pm e_{1},\dots,\pm e_{d}\}$.
To do that, we note that, since the sites of $A_{-}'$ are all in
$\left[2,\infty\right]$, the configuration $\eta_{A}$ is connected
to $\eta_{A'_{+}\cup(e+A_{-}')}$. In this new configuration the sites
of $e+A_{-}'$ are empty, and since the sites of $A_{+}'$ are all
in $\left[-\infty,-2\right]^{d}+e$ it is connected to $\eta_{(e+A_{+}')\cup(e+A_{-}')}=\eta_{e+A}$.
\end{proof}
\bibliographystyle{plain}
\bibliography{ka_HL}
\vspace{1cm}
\end{document}